\def\marginpar#1{\ignorespaces}
  \newcommand{\beq}{\begin{equation}}
    \newcommand{\eeq}{\end{equation}}
    \newcommand{\bal}{\begin{align}}
    \newcommand{\eal}{\end{align}}
    \newcommand{\bals}{\begin{align*}}
    \newcommand{\eals}{\end{align*}}
    \newcommand{\calA}{{\mathcal A}}
    \newcommand{\bbR}{{\mathbb{R}}}
    \newcommand{\bbZ}{{\mathbb{Z}}}
    \newcommand{\bbT}{{\mathbb{T}}}
    \newcommand{\calB}{{\mathcal B}}
    \newcommand{\calC}{{\mathcal C}}
    \newcommand{\calK}{{\mathcal K}}
    \newcommand{\calP}{{\mathcal P}}
\newcommand{\bd}{\mathbf d}
    \newcommand{\eps}{\varepsilon}
    \newcommand{\lb}{\label}
\DeclareMathOperator{\tr}{Tr}
\newtheorem{theorem}{Theorem}[section]
\newtheorem{remark}{Remark}[section]
\newtheorem{lemma}[theorem]{Lemma}
\newtheorem{proposition}[theorem]{Proposition}
\newtheorem{corollary}[theorem]{Corollary}
\newtheorem{definition}[theorem]{Definition}
\numberwithin{equation}{section}
\begin{document}
\title[Convergence Rate of Vanishing Viscosity for MFGs]{The Convergence Rate of Vanishing Viscosity Approximations for Mean Field Games}

\author[W. Tang, Y. P. Zhang]{Wenpin Tang, Yuming Paul Zhang}

\address[W. Tang]{Department of Industrial Engineering and Operations Research, Columbia University, S.W. Mudd Building, 
500 W 120th St, New York, NY 10027} 
\email{wt2319@columbia.edu}

\address[Y. P. Zhang]
{Department of Mathematics and Statistics, Auburn University, Parker Hall, 
221 Roosevelt Concourse, Auburn, AL 36849}
\email{yzhangpaul@auburn.edu}

\date{\today} 

\keywords{}

\begin{abstract}
Motivated by numerical challenges in first-order mean field games (MFGs) and the weak noise theory for the Kardar–Parisi–Zhang equation,
we consider the problem of vanishing viscosity approximations for MFGs. 
We provide the first results on the convergence rate to the vanishing viscosity limit in mean field games,
with a focus on the dimension dependence of the rate exponent.
Two cases are studied: MFGs with a local coupling and those with a nonlocal, regularizing coupling. 
In the former case, we use a duality approach
and our results suggest that there may be a phase transition in the dimension dependence of vanishing viscosity approximations 
in terms of the growth of the Hamiltonian and the local coupling. 
In the latter case, we rely on the regularity analysis of the solution, 
and derive a faster rate compared to MFGs with a local coupling.
A list of open problems are presented. 
\end{abstract}
\maketitle

\textit{Key words:} Convergence rate, dimension dependence, duality, KPZ equation, local/nonlocal coupling, mean field games, vanishing viscosity approximations. 

\smallskip
\maketitle
\textit{AMS 2010 Mathematics Subject Classification:} 35K10, 35Q89, 49L25, 91A15


\section{Introduction}
\label{sc1}

\quad Mean Field Games (MFGs) are a mathematical framework used to model and analyze strategic interactions in a large population,
which was independently developed by Lasry and Lions \cite{LL06a, LL06b, LL07}, and Caines, Huang and Malham\'e \cite{HMC}.
In MFGs, each individual makes decisions based on her own objective
as well as the behavior of the entire population,
represented by a mean field which describes the probability distribution of the collective state of the system. 
MFGs are widely used to model complex systems in economics \cite{carmona, jovanovic} and engineering \cite{djehiche, huang2007large, HMC}.

\quad The standard form of MFGs is given by the following system of partial differential equations (PDEs):
\beq\lb{1.1}
\left\{
\begin{aligned}
&-\partial_t u_\nu-\nu\Delta u_\nu+H(x,Du_\nu)=f(x,m_\nu),\\
&\partial_t m_\nu-\nu\Delta m_\nu-\nabla\cdot(m_\nu D_pH(x,Du_\nu))=0, \qquad \mbox{in } \Omega:=(0,T)\times \bbT^d, \\
&m_\nu(0,x)=\bar m(x),\quad u_\nu(T,x)=\bar u(x),
\end{aligned}
\right.
\eeq
where $\bbT^d:=\bbR^d/\bbZ^d$ is the $d$-dimensional torus,
$T > 0$ and $\nu\geq 0$.
The Hamiltonian $H(x,p)$ is a convex function with respect to the second variable $p$.
MFGs are used to describe Nash equilibria in differential games with a continuum of players,
where $m_\nu(t,x)$ is the density of players at time $t$ and at position $x$.
The variable $u_\nu$ is the value of a typical player's optimal control problem,
so it is a solution to some Hamilton-Jacobi equation.
As $u_\nu$ is the optimal value (that the player can possibly achieve), 
the optimal strategy is $-D_pH(x,Du_\nu)$.
If the density of the population flows to the direction preferred by the optimal strategy, 
the game has a Nash equilibrium. 
This amounts to solving \eqref{1.1}.
When $\nu > 0$, the system is of second order. 
The system by sending $\nu$ to zero is called the {\em vanishing viscosity limit}, which is of first order.

\quad  In this paper, we study the convergence rate of second-order MFGs to the vanishing viscosity limit as $\nu \rightarrow 0^+$.
A special focus is on the dependence of the rate on the dimension $d$, 
and hence on whether or under what circumstances it induces the curse of dimensionality or the lack thereof.
For simplicity, we assume that the terminal data $\bar{u}$ is independent of the density $m_\nu$.
We distinguish two cases for the coupling (or the running cost) $f$:
\begin{itemize}[itemsep = 3 pt]
\item
When $f(x,m)$ depends on the pointwise value of the density $m(t,x)$,
the coupling is referred to as {\em local}. 
\item
When $f(x,m)$ depends on the entire distribution of $m(t, \cdot)$ and is uniformly smooth for all distributions,
it is referred to as a {\em regularizing, nonlocal coupling}. 
\end{itemize}
We assume that $f$ is increasing in $m$ in the local case,
and satisfies the Lasry-Lions monotonicity condition in the nonlocal case, so that the uniqueness of solutions is guaranteed.
It is also worth mentioning that the convergence rate of vanishing viscosity approximations
of Hamilton-Jacobi equations was studied in \cite{two, evans2010adjoint, tang2023policy, tran2011adjoint},
and the optimal rate is $\nu^{\frac{1}{2}}$.
The same problem was considered for hyperbolic systems in \cite{BB05, BHY12, BY04},
and for Fokker-Planck equations with $\mathcal{C}^1$ nonlocal drifts in \cite{fetecau2022zero, fetecau2019swarming, zhang}. 

\quad Before delving into the problem, 
we digress a bit to explain the motivations to study the convergence of vanishing viscosity in MFGs. 
 \begin{enumerate}[itemsep = 3 pt]
\item
In recent years, there has been a growing interest in modeling autonomous vehicles' control and their macroscopic traffic flow
by first-order MFGs ($\nu = 0$) \cite{HDDC20b, HDDC20, KAS16}.
As pointed out in \cite{HDDC20b}, numerical methods converge slowly, or even fail to converge for first-order MFGs.
This is not surprising as iterative algorithms may be ill-posed due to irregular coefficients in the transport equation.
On the other hand, it is known \cite{CCG21, C22, CT22} that
the policy iteration algorithm converges exponentially fast for second-order MFGs ($\nu > 0$).
So a reasonable idea is to approximate first-order MFGs by second-order MFGs,
and a quantitative rate of second-order MFGs to the vanishing viscosity limit provides the approximation error.
Moreover, the convergence of the policy iteration algorithm is exponential in $\nu^{-1}$, 
which yields a tradeoff between bias and algorithm efficiency.
\item
The Kardar–Parisi–Zhang (KPZ) universality class describes the limiting behavior of a collection of random growth models,
and the underlying continuum object is the KPZ equation \cite{Corwin12, Quas12}. 
Due to its nonlinear nature, the KPZ equation is hardly accessible 
except for some initial conditions which lead to integrability.
Recently, there has been a line of work on large deviations of the stochastic heat equation (SHE),
and hence the ($1+1$)-dimensional KPZ equation by the Cole-Hopf transform,
under the {\em weak noise theory} \cite{KMS16, MKV16}.
Rigorous treatments have been developed in \cite{GLT21, LT21st, LT22, Tsai22}.
Under narrow wedge initial condition, the ``most probable'' KPZ path 
conditioned to be $\lambda$ at time $T$ is given by $h(t,x) = \log Z[\rho^{\texttt{m}}](t,x)$, 
where $Z[\rho]$ given $\rho = \rho(t,x)$ solves the PDE
\begin{equation*}
\partial_t Z = \frac{1}{2} \partial_{xx} Z + \rho Z, \mbox{ for } (t,x) \in (0,T] \times \mathbb{R}, \quad Z(0, \cdot) = \delta_0,
\end{equation*}
and $\rho^{\texttt{m}}$ solves the variational problem
$\inf \left\{\frac{1}{2} \|\rho\|_{L^2}^2: Z[\rho](T,0) = e^{\lambda}\right\}$.
Of particular interest is the lower-tail limit as $\lambda:= -\nu^{-1} \to -\infty$ (so $\nu \to 0^+$).
Setting the scaling
$Z_\nu[\rho](t,x):= Z[\nu^{-1} \rho(\cdot, \, \nu^{\frac{1}{2}} \cdot)](t, \, \nu^{-\frac{1}{2}}x)$, 
$\rho_\nu(t,x):= \nu \rho^{\texttt{m}}(t, \, \nu^{-\frac{1}{2}}x)$,
and $h_\nu(t,x):= \nu^{-1} \log Z_\nu[\rho_\nu](t,x)$,
\cite{LT22, Tsai22} showed that $h_\nu$ converges locally uniformly to a limiting shape $h_*$  as $\nu \to 0^+$
(by integrability of $h_*$) but with no rate. 
Curiously, $(h_\nu, \rho_\nu)$ solves the PDEs:
\begin{equation}
\lb{1.2}
\left\{
\begin{aligned}
&\partial_t h_\nu=\frac{\nu}{2} \partial_{xx} h_\nu+\frac12 (\partial_x h_\nu)^2+\rho_\nu,\\
&-\partial_t \rho_\nu= \frac{\nu}{2} \partial_{xx} \rho_\nu -\nabla\cdot(\rho_\nu \, \partial_x h_\nu),
\end{aligned}
\right.
\end{equation}
with a suitable choice of the initial-terminal conditions (by a Riemann-Hilbert approach).
By taking $H(x,p)=\frac{1}{2}p^2$ and $f(x,m)=m$ (a local coupling), and letting 
\[
h_\nu(t,x):=-u_{\nu/2}(T-t,x),\quad \rho_\nu(t,x):=-m_{\nu/2}(T-t,x),
\]
the equations \eqref{1.1} specify to \eqref{1.2}.
Thus, our result on the convergence of second-order MFGs to the vanishing viscosity limit 
stipulates how the lower-tail limit of the most probable KPZ path in ($1+1$)-dimension is obtained,
and gives a quantitative rate in the large deviation limit thereof.
Of course, the Cole-Hopf transform from the SHE to the KPZ equation and the weak noise theory is only valid in dimension $d = 1$.
There is no obvious theory for dimension $d \ge 2$ (see \cite{CSZ17, CD20, MU18} for recent development),
so it is not clear how large deviations of the KPZ equation in dimension $(d+1)$ with $d \ge 2$ is connected to MFGs.
Nevertheless, our results for MFGs hold for general dimensions. 
\end{enumerate}

\quad Now we turn back to MFGs.
The well-posedness of \eqref{1.1} has been thoroughly studied in the case of nonlocal, monotone, and regularizing couplings
for both second-order MFGs ($\nu>0$) and first-order MFGs ($\nu=0$) \cite{LL06a, LL06b, LL07},
and for time-dependent MFGs \cite{ferreira2021existence}.
When the function $f(x,m)$ depends locally on the value of $m$ and $\nu>0$, 
the well-posedness problem has been addressed by \cite{GPS2, GPS1} for classical solutions,
and by \cite{por14, POR} for weak solutions. 
When $\nu=0$, in general, one cannot expect the existence of classical solutions. 
 \cite{car} exploited the variational method, and showed that first-order MFGs can be viewed as an optimality condition for two convex problems. This approach can also be used for quadratic Hamiltonian MFGs in the whole domain \cite{orrieri}.
Recently, allowing the terminal condition $\bar u$ to depend on the density $m_\nu(T,\cdot)$, 
\cite{classical} obtained the weak solution as the limit of a sequence of classical solutions to strictly elliptic problems. 

\quad Here is an overview of our main results {on the convergence rate of vanishing viscosity approximations for MFGs.}
For both local and nonlocal couplings,
it is known that as $\nu \to 0^+$, 
the solutions $(u_\nu,m_\nu)$ converge to $(u,m)$, 
where $(u,m)$ are solutions to first-order MFGs \cite{achdou2021mean,note,CGPT}. 
However, the convergence rate is not well understood.
This paper provides the first quantitative rate of vanishing viscosity approximations for MFGs.
\begin{enumerate}[itemsep = 3 pt]
\item[(a)]
{\bf Local coupling}. 
When the coupling $f$ is local, 
we apply a duality approach which relies on the fact that 
equation \eqref{1.1} is the optimality condition for two convex optimization problems.
This approach was used in \cite{S17}, and can be traced back to \cite{bre99}. 
To obtain a convergence rate of solutions as $\nu \to 0^+$, the main assumption is the coercivity of the Hamiltonian and the coupling, 
as specified in the conditions (H5-1) and (H5-2).
Similar assumptions were made in \cite{GM, S17} to get a Sobolev estimate for $m$.
With these assumptions, we prove that $(u_\nu, m_\nu)$ converges in some Sobolev norm at a polynomial rate as $\nu\to 0^+$ (see Theorem \ref{T.4.3}).  

To illustrate, take $H(x,p) = |p|^r$ and $f(x,m) = m^{q-1}$ for some $q,r > 1$
(while our results hold for more general Hamiltonian and local couplings). 
Let $$\beta:= \max\left\{\frac{qr}{2qr - q -r}, 1\right\}(d+1) - d \ge 1.$$
Our result shows that
\begin{subequations}
\begin{align}
& \iint_\Omega (m_\nu^{q/2} - m^{q/2})^2 dx dt \lesssim \nu^{\frac{1}{1+\beta}}, \\
\label{eq:qrrate}
& \iint_\Omega \left||Du_\nu|^{r/2-1}Du_\nu - |Du|^{r/2-1}Du\right|^2 m dx dt \lesssim \nu^{\frac{1}{1+\beta}}.
\end{align}
\end{subequations}
{In particular, we have $\iint_\Omega (m_\nu - m)^2 dx dt \lesssim \nu^{\frac{2}{q(1 + \beta)}}$ for $q \ge 2$ (see Remark \ref{rk.4.2}).}
Note that if $\frac{1}{q} + \frac{1}{r} \le 1$, then $\beta = 1$ and the rate in \eqref{eq:qrrate} is $\nu^{\frac{1}{2}}$ which is independent of the dimension $d$;
if $\frac{1}{q} + \frac{1}{r} > 1$, then the rate becomes $\nu^{\frac{c}{d}}$ for some $c > 0$ 
which decays slowly as the dimension $d$ is large.
So there is no curse of dimensionality if the growth of $H$ and $f$ is sufficiently large (i.e. $\frac{1}{q} + \frac{1}{r} \le 1$), while
the convergence may suffer from the curse of dimensionality otherwise.
However, we do not know whether the rate $\nu^{\frac{1}{1 + \beta}}$ is tight;
if it is, or the rate is $\nu^{\kappa(d)}$ with any $\kappa$ decreasing to $0$ as $d \to \infty$ for $\frac{1}{q} + \frac{1}{r} > 1$,
then it implies a phase transition in the dimension dependence of vanishing viscosity approximations for MFGs 
at $\frac{1}{q} + \frac{1}{r} = 1$.

When the Hamiltonian is quadratic ($q = 2$), 
we prove a stronger convergence result for $u_\nu$:
\begin{equation}
\label{eq:qrrate2}
\iint_\Omega |u_\nu - u|^2 m dx dt \lesssim \nu^{\frac{1}{2(1+\beta)}} \quad \mbox{if } r > \max \left\{ 2 +\frac{d}{d+1}, \frac{d^2}{2d +3} \right\},
\end{equation}
(see Theorem \ref{T.5.3}). This result, presented in Theorem \ref{T.5.3}, cannot be directly deduced from \eqref{eq:qrrate}, as the weighted Poincar\'e inequality may not be applicable. 
Instead, we use the higher regularity of $m_\nu$ (see \cite{GM}) and the equations. 
For $d = 1$, the condition in \eqref{eq:qrrate2} requires $r > \frac{5}{2}$, which fails to cover the KPZ Hamiltonian with $r = 2$. 
To address this, we further need $d \le 3$ to get uniform boundedness of $u_\nu$ in the KPZ setting with $q = r = 2$, as presented in Theorem \ref{T.5.4}.

We also mention that if the terminal data $\bar u=\bar u(x,m_\nu(T,x))$ depends on $m_\nu$, 
the convergence rate of vanishing viscosity for MFGs remains open.
In this case, we have no longer the optimality condition characterization.

\smallskip
\item[(b)]
{\bf Nonlocal and regularizing coupling}. 
When the coupling is nonlocal and regularizing, we adopt a different approach. 
Instead of using the optimization structure, we rely on the duality of the two equations and some regularity properties of the solutions, both uniform and non-uniform with respect to $\nu$. 
Our main tool is the uniform semi-concavity of the solution $u_\nu$, which results from the superlinear growth of the Hamiltonian and classical Hamilton-Jacobi equations \cite{semiconcave}. 
This property allows us to establish a uniform bound on the $W^{1,2}$ norm of $\nu^{\frac12} m_\nu$ for all $\nu > 0$. 
With these findings, we first prove \eqref{eq:qrrate} with $r=2$ and $\beta=1$, and 
\beq\lb{222}
\iint_\Omega (f(x,m_\nu)-f(x,m))(m_\nu-m)dxdt\lesssim \nu^{\frac12},
\eeq
(see Theorem \ref{T.5.2}). This method, while requiring less restrictive assumptions, leads to a faster convergence rate 
compared to those with a local coupling.

Assuming that \eqref{222} implies pointwise convergence of $f(x,m_\nu)$ to $f(x,m)$  as $\nu\to 0^+$, 
we obtain a pointwise convergence of $u_\nu$ with rate $\nu^\frac14$ (see Theorem \ref{T.6.3'}).
While under a weaker condition (H4''), the estimate \eqref{222} implies that $f(x,m_\nu)$ converges to $f(x,m)$ in $L^1(\Omega)$ with a rate $\nu^{1/4}$. 
Then we use both the dual equation method (see \cite{lin}) and the viscosity solution method (see \cite{two}) to prove that the HJ equation of $u$ is stable under both $L^1$-perturbation of coefficients and vanishing viscosity. Indeed, we obtain for all $t \le T$,
\[
\label{eq:unonlocal}
\|u_\nu(t, \cdot) - u(t, \cdot)\|_{L^1(\Omega)} \lesssim \nu^{\frac{1}{4}},
\]
(see Theorem \ref{T.6.3}).
\end{enumerate}
We also mention that in the literature, $\bar m$ is often assumed to be strictly positive, see \cite{CG, CGPT, classical, porretta2023regularizing} for superlinear Hamiltonian, and \cite{gra} for linear Hamiltonian. 
The condition is removed under extra requirements on $H$ and $f$ \cite{car, GM19}, or for the kinetic type first order MFGs (with continuous initial measure) \cite{griffin2022variational}.
Relying on \cite{CG, CGPT},
we show that this condition can be dropped without further restrictions (see Theorem \ref{T.3.2}).

\medskip
\quad The remainder of the paper is organized as follows.
In Section \ref{S2}, we provide background on MFGs with a local coupling. 
In Section \ref{S3}, we prove the well-posedness of MFGs with nonnegative data.
In Sections \ref{S4} and \ref{S42}, we study the convergence rate of vanishing viscosity for MFGs with a local coupling.
In Sections \ref{S5} and \ref{S61} we consider the convergence rate of vanishing viscosity for MFGs with a nonlocal and regularizing coupling.
Finally, a list of open problems are presented in Section \ref{S8}.

\section{Assumptions and Preliminaries for Local Coupling}\lb{S2}

\quad We first discuss the assumptions for the case of local coupling $f$. The assumptions are made so that \eqref{1.1} is well-posedness for all $\nu\geq 0$, and most of them can be found in \cite{CG,CGPT}. We assume that there exists $C_0\geq 1$ such that:

\begin{itemize}
    \item[(H1)] (Conditions on the coupling) $f:\bbT^d\times [0,\infty)\to \bbR$ is continuous in both variables, strictly increasing with respect to the second variable, and there exists $q>1$ such that
    \[
    C_0^{-1}m^{q-1}-C_0\leq f(x,m)\leq C_0m^{q-1}+C_0\quad\text{     for all $m\geq 0$ and $x\in\bbT^d$}. 
    \]
    Moreover, we make the normalization condition:
    \beq\lb{H1f}
    f(x,0)=0\quad\text{ for all }x\in\bbT^d.
    \eeq

    \item[(H2)] (Conditions on the Hamiltonian) The Hamiltonian $H:\bbT^d\times\bbR^d\to\bbR$ is continuous in both variables, strictly convex and differentiable in the second variable, with $D_pH$ continuous, and satisfying for some $r>1$,
    \[
    C_0^{-1}|p|^r-C_0\leq H(x,p)\leq C_0|p|^r+C_0 \quad\text{ for all }  (x,p)\in \bbT^d\times\bbR^d.
    \]

    \item[(H3)] (Conditions on the initial and terminal data)  $\bar u:\bbT^d\to\bbR$ is of class $\calC^2$, and $\bar{m}:\bbT^d\to \bbR$  is a $\calC^1$ nonnegative density function.

\end{itemize}

\begin{remark}
1. As discussed in \cite{CGPT}, \eqref{H1f} is just a normalization condition, which can be assumed without loss of generality. In fact, if it does not hold, one can replace $f(x,m)$ and $H(x,p)$ by $f(x,m)-f(x,0)$ and $H(x,p)-f(x,0)$, respectively.

\smallskip

\quad 2. Recall that the Fenchel conjugate $H^*(x,\cdot)$ of $H(x,\cdot)$ for each $x\in\bbT^d$ is defined as $H^*(x,\xi):=\sup_{p\in\bbR^d}(\langle \xi,p\rangle-H(x,p))$. Then $H^*$ is continuous and satisfies for some $C_0\geq1$ (without loss of generality let us still use $C_0$) such that
\[
C_0^{-1}|\xi|^{r'}-C_0\leq H^*(x,\xi)\leq C_0|\xi|^{r'}+C_0\quad \text{ for all }(x,\xi)\in\bbT^d\times\bbR^d,
\]
where $r':=\frac{r}{r-1}$ is the conjugate of $r$.
 Later we also write $q':=\frac{q}{q-1}$ as the conjugate of $q$.

\smallskip
\quad 3. Let $F$ be defined as
\[
F(x,m):=\int_0^mf(x,s)ds\quad\text{ if }m\geq 0,
\]
and $F(x,m)=+\infty$ if $m<0$. Then {\rm(H2)} yields for some $C_0\geq 1$,
 \beq\lb{F}
    C_0^{-1}m^{q}-C_0\leq F(x,m)\leq C_0m^{q}+C_0\quad\text{     for all $m\geq 0$ and $x\in\bbT^d$}. 
\eeq
We define $F^*(x,\cdot)$ to be the Fenchel conjugate of $F(x,\cdot)$. Then $F^*(x,\alpha)$ is strictly convex in $\alpha$, $F^*(x,\alpha)=0$ for $\alpha\leq 0$, and for some $C_0\geq 1$,
 \beq\lb{F*}
    C_0^{-1}\alpha^{q'}-C_0\leq F^*(x,\alpha)\leq C_0 \alpha^{q'}+C_0\quad\text{     for all $\alpha\geq 0$ and $x\in\bbT^d$}. 
\eeq

\quad 4. Unlike \cite{CG,CGPT}, we do not need to assume $\bar m>0$. It was used in \cite{CG,CGPT} to show the existence of a solution for the optimization problem \eqref{opt2}. 

\smallskip
\quad 5. One can consider equations with more general second order terms:
\[
\left\{
\begin{aligned}
&-\partial_t u_\nu-\nu A_{ij}\partial_{ij}u_\nu+H(x,Du_\nu)=f(x,m_\nu),\\
&\partial_t m_\nu-\nu\partial_{ij}(A_{ij} m_\nu)-\nabla\cdot(m_\nu D_pH(x,Du_\nu))=0,\\
&m_\nu(0,x)=\bar m(x),\quad u_\nu(T,x)=\bar u(x)
\end{aligned}
\right.
\]
where $A=A(x)$ is assumed to be a Lipschitz continuous map, taking values in the set of symmetric, uniformly positive definite matrices. But for this, one needs to further assume $r\geq q'$, see \cite{CGPT}.

\end{remark}

\subsection{Optimization problems}
We discuss two optimal control problems which are in duality, and we refer to \cite{car,CGPT}. 
For any $\nu\geq 0$, the first problem is
\beq\lb{opt1}
\inf_{(m,w)\in\calK_{1,\nu}}\calB(m,w)
\eeq
where
\[
\calB(m,w):=\iint_{\Omega}mH^*\left(x,-\frac{w}{m}\right)+F(x,m)dxdt+\int_{\bbT^d}\bar{u}(x)m(T,x)dx,
\]
with $\Omega=(0,T)\times\bbT^d$, and
\[
\calK_{1,\nu}:=\left\{(m,w)\in L^1(\bbT^d)\times L^1(\bbT^d;\bbR^d)\,\big|\, \partial_t m-\nu\Delta m+\nabla\cdot w=0,\, m(0)=\bar{m}\right\}
\]
where the continuity equation holds in the sense of distributions.
When $m=0$, we use the usual convention:
\[
mH^*\left(x,-\frac{w}{m}\right):=\left\{
\begin{aligned}
    &+\infty, &\text{ if $m=0$ and $w\neq 0$},\\
    &0,  &\text{ if $m=0$ and $w= 0$}.
\end{aligned}\right.
\]


\quad Now we discuss the second minimization problem. 
Recall $q,r>1$ from (H1)(H2) and $q',r'$ are their conjugates, respectively. Set
\beq\lb{gamma}
\gamma:=\frac{rq'(d+1)}{d-r(q'-1)}\,\text{ if }q'<1+\frac{d}{r}\quad\text{and}\quad\gamma:=\infty\,\text{ if }q'> 1+\frac{d}{r},
\eeq
and $\gamma>0$ can be an arbitrarily large constant when $q'=1+\frac{d}{r}$.
Then we let 
\[
\calA(u,\alpha):=\iint_{\Omega} F^*\left(x,\alpha(t,x)\right)dxdt-\int_{\bbT^d}{u}(0,x)\bar m(x)dx,
\]
and let $\calK_{2,\nu}$ be the set of $(u,\alpha)\in L^\gamma(\Omega)\times L^{q'}(\Omega)$ such that $Du\in L^r(\Omega)$ and the following holds in the sense of distributions
\[
-\partial_tu-\nu\Delta u+H(x,D u)\leq \alpha,\quad u(T,\cdot)\leq \bar{u}.
\]
The precise meaning of the inequality is given in \cite[Section 3]{CGPT}.
The second optimization problem (also called the relaxed problem in \cite{car,CGPT}) is 
\beq\lb{opt2}
\inf_{(u,\alpha)\in\calK_{2,\nu}}\calA(u,\alpha).
\eeq

\quad It turns out that for each $\nu\geq 0$, the optimization problems \eqref{opt1} and \eqref{opt2} are in duality. The first equality below is proved in \cite{car,CGPT} by the Fenchel-Rockafellar theorem.
The second equality is due to the fact that one can always replace $\alpha$ by $\max\{\alpha,0\}$ as $F^*(x,\alpha)=0$ for $\alpha\leq 0$. 
Here we remark that $\bar m>0$ is not needed in the proofs.

\begin{theorem}[{\rm }\cite{CGPT}]\lb{T.2.1} For all $\nu\geq 0$,
\[
-\min_{(m,w)\in \calK_{1,\nu}}\calB(m,w)
=\inf_{(u,\alpha)\in\calK_{2,\nu}}\calA(u,\alpha)=\inf_{(u,\alpha)\in\calK_{2,\nu},\,\alpha\geq 0\text{ a.e.}}\calA(u,\alpha).
\]
Moreover, the minimum of the first term is achieved by a unique pair in $(m,w)\in\calK_{1,\nu}$ satisfying $(m,w)\in L^q(\Omega )\times L^{\frac{r'q}{r'+q-1}}(\Omega)$.
\end{theorem}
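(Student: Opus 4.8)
The second equality is elementary, and I would dispose of it first: for any admissible $(u,\alpha)$ the pair $(u,\max\{\alpha,0\})$ is still admissible (it dominates $-\partial_tu-\nu\Delta u+H(x,Du)$ and remains in $L^{q'}$), while $F^*(x,\cdot)$ vanishing on $(-\infty,0]$ forces $F^*(x,\max\{\alpha,0\})=F^*(x,\alpha)$ pointwise, so $\calA$ is unchanged. For the first equality I would run the Fenchel--Rockafellar duality exactly as in \cite{car,CGPT}. First reformulate \eqref{opt2}: since $F^*(x,\cdot)$ is nondecreasing and vanishes on $(-\infty,0]$, the best admissible $\alpha$ for a given $u$ is the positive part of $-\partial_tu-\nu\Delta u+H(x,Du)$, so that $\inf_{\calK_{2,\nu}}\calA=\inf_u\{\calG(\Lambda u)+\calF(u)\}$, where $\Lambda u:=(-\partial_tu-\nu\Delta u,\,Du)$, $\calG(a,b):=\iint_\Omega F^*(x,a+H(x,b))\,dx\,dt$, and $\calF(u):=-\int_{\bbT^d}u(0,x)\bar m(x)\,dx$ if $u(T,\cdot)\leq\bar u$ and $+\infty$ otherwise, with the infimum over a Banach space of functions that are $\calC^1$ in time (and $\calC^2$ in space when $\nu>0$). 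The one nontrivial point in this reformulation is that restricting from the weak subsolutions of $\calK_{2,\nu}$ to such smooth functions does not lower the infimum; this is established in \cite{CGPT} by mollification together with the parabolic ($\nu>0$) or transport ($\nu=0$) a priori bounds that also yield the exponent $\gamma$ of \eqref{gamma}, and I would simply cite it.

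Then I would verify the hypotheses of Fenchel--Rockafellar: $\calF$ is convex and lower semicontinuous, $\calG$ is convex, and $\calG$ is finite and continuous at $\Lambda u_0$ for $u_0$ any constant below $\min_{\bbT^d}\bar u$ (there $\calF(u_0)<\infty$, $\Lambda u_0$ is bounded, and $F^*,H$ are continuous) --- the qualification condition. The theorem then gives $\inf_u\{\calG(\Lambda u)+\calF(u)\}=\max_{(\mu,\vec\sigma)}\{-\calF^*(-\Lambda^*(\mu,\vec\sigma))-\calG^*(\mu,\vec\sigma)\}$ with the maximum attained, and it remains to identify the conjugates. Since $F^*(x,\alpha)\sim|\alpha|^{q'}$ is superlinear, the theory of conjugates of convex integral functionals makes $\calG^*$ finite only on pairs $(\mu,\vec\sigma)=(m\,dx\,dt,\,w\,dx\,dt)$ with $m\geq0$, and a fiberwise computation using $F^{**}=F$, $(H^*)^*=H$ and the perspective construction gives $\calG^*=\iint_\Omega(mH^*(x,w/m)+F(x,m))\,dx\,dt$; an integration by parts shows $-\calF^*(-\Lambda^*(\mu,\vec\sigma))$ is finite exactly when $(m,w)$ satisfies the continuity equation with $m(0)=\bar m$ (such pairs having well-defined time traces) and then equals $-\int_{\bbT^d}\bar u(x)m(T,x)\,dx$. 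After replacing $w$ by $-w$ the dual problem is precisely $-\calB$ on $\calK_{1,\nu}$, so $\inf_{\calK_{2,\nu}}\calA=-\min_{\calK_{1,\nu}}\calB$ and a minimizer of $\calB$ exists because the maximum is attained. Uniqueness follows from strict convexity: $\iint F(x,m)$ is strictly convex in $m$ (as $f(x,\cdot)$ is strictly increasing), which pins down $m$ a.e., and then $w\mapsto mH^*(x,-w/m)$ is strictly convex on $\{m>0\}$ while $w=0$ is forced on $\{m=0\}$. For the integrability, $\calB(m,w)<\infty$ together with (H1)--(H2) gives $F(x,m)\gtrsim m^q-C$, hence $m\in L^q(\Omega)$, and $mH^*(x,-w/m)\gtrsim|w|^{r'}m^{1-r'}-Cm$, which through Hölder's inequality with exponent $\theta=q/(q+r'-1)$ applied to $|w|^{r'\theta}=(|w|^{r'}m^{1-r'})^\theta m^{(r'-1)\theta}$ yields $w\in L^{r'q/(r'+q-1)}(\Omega)$.

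I expect the reformulation of \eqref{opt2} over smooth functions to be the only genuine obstacle --- it rests on the regularity and interpolation theory behind \eqref{gamma} --- while everything else is convex-duality bookkeeping. I would also stress that $\bar m>0$ is used nowhere above: in \cite{CG,CGPT} it served only to produce a minimizer of \eqref{opt2}, but the existence statement here is for \eqref{opt1} and comes for free from the attainment of the maximum in Fenchel--Rockafellar, while neither the duality identity nor the uniqueness and integrability of the $\calB$-minimizer uses positivity of $\bar m$. The case $\nu=0$ goes through identically, with transport estimates in place of the parabolic ones.
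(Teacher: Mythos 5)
Your proposal is correct and follows essentially the same route the paper takes: the paper cites \cite{car,CGPT} for the Fenchel--Rockafellar duality giving the first equality, disposes of the second equality by replacing $\alpha$ with $\max\{\alpha,0\}$ exactly as you do, and justifies uniqueness by the convexity of $\calK_{1,\nu}$ together with the strict convexity of $F(x,\cdot)$ and $H^*(x,\cdot)$. Your additional observations — that the only delicate step is the relaxation to smooth test functions (deferred to \cite{CGPT}), and that $\bar m>0$ is nowhere needed for this particular statement — likewise match the paper's remarks.
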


\quad The last statement that the minimum is achieved by a unique pair in $\calK_{1,\nu}$ is because
the set $\calK_{1,\nu}$ is convex, and the functions $F(x,\cdot)$ and $H^*(x,\cdot)$ are strictly convex for each $x$.

\section{Well-posedness for non-negative data}
\lb{S3}

\quad In this section, we show well-posedness of \eqref{1.1} without assuming $\bar m$ to be strictly positive.
First we recall the notion of weak solutions in \cite{CGPT}. Let $\nu\geq 0$ and $\gamma$ be from \eqref{gamma}.

\begin{definition}
We say that $(u_\nu,m_\nu)\in L^\gamma(\Omega)\times L^q(\Omega)$ is a weak solution to \eqref{1.1} if
\begin{itemize}
    \item[(i)] the following integrability conditions hold:
    \[
    Du_\nu\in L^r,\, m_\nu H^*(\cdot,D_pH(\cdot,Du_\nu))\in L^1\quad\text{and}\quad m_\nu D_pH(\cdot,Du_\nu)\in L^1.
    \]
    \item[(ii)] The following  (in)equalities hold in the sense of distribution:
    \[
    -\partial_t u_\nu-\nu\Delta u_\nu+H(x,Du_\nu)\leq f(x,m_\nu)\quad\text{in }\Omega,
    \]
    with $u_\nu(T,\cdot)\leq \bar u$, and
    \[
    \partial_t m_\nu-\nu\Delta m_\nu-\nabla\cdot(m_\nu D_p(x,Du_\nu))=0\quad\text{in }\Omega\text{ with }m_\nu(0)=\bar{m}.
    \]

    \item[(iii)] We have
    \[
    \iint_\Omega m_\nu f(x, m_\nu)+H^*(x,D_pH(x,Du_\nu))dxdt+\int_{\bbT^d}m_\nu(T,x)\bar u(x)-\bar{m}(x)u_\nu(0,x)dx=0.
    \]
Here the last term is well-defined due to Lemma 5.1 \cite{CGPT}.
\end{itemize}

\end{definition}

\begin{theorem}\lb{T.3.2}
Assume {\rm (H1)--(H3)} and let $\nu\geq 0$. The problem \eqref{opt2} has at least one solution in $\calK_{2,\nu}$. Let $(m_\nu,w_\nu)\in\calK_{1,\nu}$ be the minimizer of \eqref{opt1} and $(u_\nu,\alpha_\nu)\in \calK_{2,\nu}$ be a minimizer of \eqref{opt2}. Then:
\begin{enumerate}[itemsep = 3 pt]
\item
$(u_\nu,m_\nu)$ is  a weak solution to \eqref{1.1}, and $w_\nu=-m_\nu D_pH(\cdot,Du_\nu)$ and $\alpha_\nu=f(\cdot,m_\nu)$ almost everywhere. 
\item
Conversely, any weak solution $(u_\nu,m_\nu)$ of \eqref{1.1} is such that the pair $(m_\nu,-m_\nu D_pH(\cdot,Du_\nu))$ is the minimizer of \eqref{opt1} and $(u_\nu,f(\cdot, m_\nu))$ is a minimizer of \eqref{opt2}.
\item
The solution is unique in the following sense: if $(u_\nu',m_\nu')$ is another weak solution to \eqref{1.1}, then $m_\nu=m_\nu'$ a.e. and $u_\nu=u_\nu'$ in $\{m_\nu>0\}$.
\end{enumerate}
\end{theorem}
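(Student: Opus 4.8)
The plan is to follow the variational approach of Cardaliaguet--Graber--Porretta--Tonon \cite{CGPT}, and to show that their arguments go through even without the hypothesis $\bar m > 0$. First I would establish existence of a minimizer for the relaxed problem \eqref{opt2}: the only place where $\bar m > 0$ entered in \cite{CGPT} is precisely here (see Remark, item 4), where positivity of $\bar m$ was used to control the linear term $-\int_{\bbT^d} u(0,x)\bar m(x)\,dx$ from below along a minimizing sequence and thus obtain coercivity of $\calA$. I would instead invoke the a priori estimates available from the already-established minimizer $(m_\nu, w_\nu)$ of \eqref{opt1} (which exists and is unique by Theorem \ref{T.2.1}, with no positivity assumption): by the duality equality in Theorem \ref{T.2.1}, any minimizing sequence $(u^k, \alpha^k) \in \calK_{2,\nu}$ has $\calA(u^k, \alpha^k)$ bounded, which bounds $\|\alpha^k\|_{L^{q'}}$ via \eqref{F*} and, testing the constraint inequality against $m_\nu$ (using the continuity equation in $\calK_{1,\nu}$), yields a bound on $\iint_\Omega m_\nu H(x, Du^k) \, dx dt$ and hence on $\int u^k(0)\bar m \, dx$; combined with the $L^r$-bound on $Du^k$ coming from the constraint plus the $\alpha^k$-bound, one recovers enough compactness (weak-$*$ in the relevant spaces, lower semicontinuity of $\calA$) to extract a minimizer. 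This is exactly the reduction performed in \cite{CG, CGPT}, so I would cite those works for the technical details and only indicate why positivity of $\bar m$ is dispensable.

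Once existence of minimizers for both \eqref{opt1} and \eqref{opt2} is in hand, parts (1)--(3) follow the now-standard verification argument. For part (1), I would write out the chain of inequalities: by weak duality $-\calB(m_\nu, w_\nu) = \calA(u_\nu, \alpha_\nu)$, expand both sides, use the continuity-equation constraint in $\calK_{1,\nu}$ to integrate by parts (legitimate after the mollification/truncation arguments of \cite[Section 5]{CGPT}, which do not use $\bar m > 0$), and arrive at
\[
\iint_\Omega \Big[ m_\nu H^*\!\Big(x, -\tfrac{w_\nu}{m_\nu}\Big) + m_\nu H(x, Du_\nu) + \langle w_\nu, Du_\nu\rangle \Big] dx dt + \iint_\Omega \big[ F(x,m_\nu) + F^*(x,\alpha_\nu) - m_\nu \alpha_\nu \big] dx dt \le 0.
\]
Each bracket is nonnegative by the Fenchel--Young inequality, so both vanish a.e., and equality in Fenchel--Young forces $w_\nu = -m_\nu D_pH(\cdot, Du_\nu)$ on $\{m_\nu > 0\}$ (and $w_\nu = 0$ where $m_\nu = 0$, by the convention), and $\alpha_\nu = f(\cdot, m_\nu)$ a.e.\ (using strict convexity of $F^*$ and \eqref{H1f} to pin down the value where $m_\nu = 0$). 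Feeding these identities back shows the constraint inequalities in the definition of $\calK_{2,\nu}$ and $\calK_{1,\nu}$ are saturated in the appropriate sense and that condition (iii) of the weak-solution definition holds; the integrability requirements (i) come from Theorem \ref{T.2.1} together with \eqref{F*}. Part (2) is the converse bookkeeping: given a weak solution $(u_\nu', m_\nu')$, condition (iii) says precisely that the Fenchel--Young inequalities are equalities, so $(m_\nu', -m_\nu' D_pH(\cdot, Du_\nu'))$ and $(u_\nu', f(\cdot, m_\nu'))$ are admissible with $\calB + \calA \le 0$, hence optimal by weak duality.

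For the uniqueness statement (3), I would use the standard Lasry--Lions-type monotonicity argument adapted to the weak setting. Since $(m_\nu, -m_\nu D_pH(\cdot,Du_\nu))$ is the minimizer of the strictly convex problem \eqref{opt1} over the convex set $\calK_{1,\nu}$, and any weak solution gives rise to such a minimizer by part (2), uniqueness of the minimizer (Theorem \ref{T.2.1}) immediately gives $m_\nu = m_\nu'$ a.e.\ and $m_\nu D_pH(\cdot, Du_\nu) = m_\nu' D_pH(\cdot, Du_\nu')$ a.e. On the set $\{m_\nu > 0\}$ the latter forces $D_pH(\cdot, Du_\nu) = D_pH(\cdot, Du_\nu')$, and strict convexity of $H$ in $p$ (so that $D_pH(x,\cdot)$ is injective) gives $Du_\nu = Du_\nu'$ a.e.\ on $\{m_\nu > 0\}$; then comparing the two HJ inequalities, which are equalities a.e.\ on $\{m_\nu > 0\}$ by part (1), and using that $u_\nu(T,\cdot) = u_\nu'(T,\cdot) = \bar u$ on the relevant set, one concludes $u_\nu = u_\nu'$ on $\{m_\nu > 0\}$.

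I expect the main obstacle to be the first step --- existence of a minimizer for \eqref{opt2} without $\bar m > 0$ --- since this is the only genuinely new point; the difficulty is that the naive coercivity estimate for $\calA$ breaks down when $\bar m$ vanishes on part of $\bbT^d$, and one must instead bootstrap from the primal minimizer, which requires carefully justifying the duality pairing $\int u^k(0)\bar m\,dx$ along the minimizing sequence (the term is only known to be well-defined a posteriori via \cite[Lemma 5.1]{CGPT}). Everything downstream is a faithful transcription of \cite{CGPT} with the positivity hypothesis deleted, so the write-up would emphasize this one point and refer to \cite{CG, CGPT} for the remaining routine arguments.
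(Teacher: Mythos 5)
Your reduction of the theorem to the single new point --- existence of a minimizer of \eqref{opt2} when $\bar m$ is allowed to vanish --- matches the paper exactly, and your treatment of parts (1)--(3) (Fenchel--Young saturation plus uniqueness of the primal minimizer) is the same verification argument the paper delegates to \cite{CGPT}. The gap is in your proposed existence argument. First, boundedness of $\calA(u^k,\alpha^k)$ alone does not bound $\|\alpha^k\|_{L^{q'}}$ via \eqref{F*}: one has $\iint_\Omega F^*(x,\alpha^k)\,dxdt=\calA(u^k,\alpha^k)+\int_{\bbT^d} u^k(0)\bar m\,dx$, so you must first control $\int_{\bbT^d} u^k(0)\bar m\,dx$ from above, which in your own scheme (testing the constraint against $m_\nu$) requires controlling $\iint_\Omega\alpha^k m_\nu\,dxdt$, i.e.\ the very $\alpha^k$ bound you are after. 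That particular circle can be broken by Young's inequality with a small parameter, since $m_\nu\in L^q(\Omega)$. The fatal problem is elsewhere: $m_\nu$ and $\bar m$ are only nonnegative densities and may vanish on large sets, so testing against $m_\nu$ bounds $\iint_\Omega m_\nu H(x,Du^k)\,dxdt$, which gives no control of $\|Du^k\|_{L^r(\Omega)}$ on $\{m_\nu=0\}$, and the bound on $\int_{\bbT^d} u^k(0)\bar m\,dx$ gives no control of $u^k(0)$ on $\{\bar m=0\}$. Membership in $\calK_{2,\nu}$ requires $u\in L^\gamma(\Omega)$ and $Du\in L^r(\Omega)$ globally, so without these global bounds you cannot pass to a weak limit inside the admissible class; you have reproduced, with $m_\nu$ in place of $\bar m$, exactly the coercivity failure that made $\bar m>0$ necessary in \cite{CGPT}.

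The paper's fix (Proposition \ref{P.3.3}) is different and closes this gap with two ingredients your proposal omits. First, the minimizing sequence is replaced by one in which $\alpha^n\ge 0$ and $u^n$ is the viscosity solution of the HJ equation with equality, so that comparison with $-\|\bar u\|_\infty-\|H(\cdot,0)\|_\infty(T-t)$ gives a pointwise lower bound on $u^n$ uniform in $n$ and $\nu$. Second, the equation is integrated against $\bar m+1$ rather than against $\bar m$ or $m_\nu$: the added constant produces the full-measure terms $\int_{\bbT^d}u^n(0,x)\,dx$ and $\iint_\Omega H(x,Du^n)\,dxdt$, which, combined with the pointwise lower bound and the coercivity of $H$ and $F^*$, yield uniform bounds on $\|u^n(0)\|_{L^1}$, $\|Du^n\|_{L^r}$ and $\|\alpha^n\|_{L^{q'}}$ with no positivity assumption on the test density. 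If you want to salvage your route, you need to import both of these steps; the appeal to the primal minimizer is not what does the work.
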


%

\quad The theorem is proved in \cite{CGPT} with an extra assumption that $\bar{m}$ is strictly positive. 
The assumption is only used in their Proposition 5.4 to show the existence of a solution to the optimization problem \eqref{opt2}. 
We use a uniform lower bound of $u_\nu$ and a slightly different test function to remove the constraint. We state the result below.

\begin{proposition}\lb{P.3.3}
Under the assumptions {\rm (H1)--(H3)}, for each $\nu\geq 0$, the problem \eqref{opt2} has at least one solution $(u_\nu,\alpha_\nu)\in \calK_{2,\nu}$ which is uniformly bounded from below by a constant depending only on the assumptions for all $\nu\in[0,1]$.

\quad Moreover, there exists $C>0$ depending only on the assumptions such that
\beq\lb{3.4}
\sup_{\nu\in[0,1]}\|u_\nu(0,\cdot)\|_{L^1(\bbT^d)}+\|Du_\nu\|_{L^r(\Omega)}+\|\alpha_\nu\|_{L^{q'}(\Omega)}\leq C.
\eeq
\end{proposition}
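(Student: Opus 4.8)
The plan is to follow the strategy of \cite[Proposition 5.4]{CGPT} for the existence of a minimizer of \eqref{opt2}, but to replace the test function used there (which exploited $\bar m > 0$) by a modified one, and to supply an a priori lower bound for admissible near-minimizers that is uniform in $\nu \in [0,1]$. First I would observe that \eqref{opt1} always admits a minimizer $(m_\nu, w_\nu)$ by Theorem \ref{T.2.1}, so by duality $\inf_{\calK_{2,\nu}} \calA > -\infty$ and a minimizing sequence $(u^k, \alpha^k) \in \calK_{2,\nu}$ exists. The goal is to extract a limit. The coercivity of $F^*$ from \eqref{F*} gives $\|\alpha^k\|_{L^{q'}(\Omega)} \lesssim 1$ along the minimizing sequence once we know $-\int u^k(0,x)\bar m(x)\,dx$ is bounded below; the constraint $-\partial_t u^k - \nu \Delta u^k + H(x, Du^k) \le \alpha^k$ together with the growth of $H$ from (H2) then should yield an $L^r$ bound on $Du^k$ by testing against $m_\nu$ (the optimal density), integrating by parts, and using the equation for $m_\nu$ — this is the standard energy identity behind Theorem \ref{T.2.1}. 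The Poincar\'e inequality on $\bbT^d$ upgrades this to a bound on $u^k - \fint u^k$ in $L^\gamma$, and control of the spatial average $\fint_{\bbT^d} u^k(t,\cdot)\,dx$ comes from integrating the constraint in $x$.

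The crux — and the only place $\bar m > 0$ was used — is the lower bound on $u_\nu(0,\cdot)$, equivalently a lower bound on $\int u^k(0,x)\bar m(x)\,dx$ that does not degenerate when $\bar m$ vanishes somewhere. Here I would argue as follows: compare $u^k$ with the solution $\phi_\nu$ of the plain backward Hamilton–Jacobi(–Bellman) equation $-\partial_t \phi_\nu - \nu\Delta\phi_\nu + H(x, D\phi_\nu) = 0$ with $\phi_\nu(T,\cdot) = \min_{\bbT^d}\bar u$ (a constant terminal datum); since $\alpha^k \ge 0$ may be assumed and $\bar u \ge \min \bar u$, a comparison principle for the subsolution inequality gives $u^k \ge \phi_\nu$ a.e., and $\phi_\nu$ is bounded below by a constant depending only on $T$, $\min\bar u$, and the lower bound $-C_0$ on $H$ from (H2), uniformly for $\nu \in [0,1]$ (indeed $\phi_\nu(t,x) \ge \min\bar u - C_0 T$ by the maximum principle, using $H \ge -C_0$). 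This furnishes the uniform-in-$\nu$ lower bound claimed in the Proposition, and simultaneously makes $-\int u^k(0,x)\bar m(x)\,dx \ge -(\sup_{\bbT^d}|\phi_\nu(0,\cdot)|)\|\bar m\|_{L^1} \gtrsim -1$, so $\calA(u^k,\alpha^k)$ being bounded above forces $\|\alpha^k\|_{L^{q'}}$ bounded; this is the substitute for the $\bar m > 0$ argument.

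With the uniform bounds \eqref{3.4} in hand along the minimizing sequence, I would pass to the limit by weak compactness: extract $\alpha^k \rightharpoonup \alpha_\nu$ in $L^{q'}(\Omega)$, $Du^k \rightharpoonup Du_\nu$ in $L^r(\Omega)$, and $u^k \rightharpoonup u_\nu$ in $L^\gamma(\Omega)$ (using $\gamma$ finite, or a diagonal/local argument when $\gamma = \infty$), checking that the limit still satisfies the distributional inequality $-\partial_t u_\nu - \nu\Delta u_\nu + H(x, Du_\nu) \le \alpha_\nu$ with $u_\nu(T,\cdot)\le\bar u$ — here convexity of $H$ in $p$ gives weak lower semicontinuity of the relevant functional, exactly as in \cite[Section 3]{CGPT}, so $(u_\nu,\alpha_\nu)\in\calK_{2,\nu}$. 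Lower semicontinuity of $\calA$ (convexity of $F^*$ plus the linear boundary term, whose continuity under weak convergence of $u^k(0,\cdot)$ follows from the trace bound of \cite[Lemma 5.1]{CGPT}) then yields $\calA(u_\nu,\alpha_\nu) \le \liminf \calA(u^k,\alpha^k) = \inf_{\calK_{2,\nu}}\calA$, so $(u_\nu,\alpha_\nu)$ is a minimizer; it inherits the lower bound $u_\nu \ge \phi_\nu \ge \min\bar u - C_0 T$ and the bounds \eqref{3.4}. The main obstacle I anticipate is making the comparison $u^k \ge \phi_\nu$ rigorous at the level of distributional subsolutions of low regularity (the inequality constraint is understood only in the weak sense of \cite[Section 3]{CGPT}), so I would either invoke the comparison/representation results already established there for such subsolutions, or first mollify/regularize, obtain the bound for the regularized problem, and pass to the limit — a technical but routine step given the framework of \cite{CGPT}.
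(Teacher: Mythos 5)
Your existence/compactness framework and the uniform lower bound via comparison (with $\phi_\nu$, or equivalently with the explicit subsolution $-\|\bar u\|_\infty-\|H(\cdot,0)\|_\infty(T-t)$ as the paper does) are fine and match the paper's Step 1. But the derivation of the uniform bounds \eqref{3.4} has a genuine gap, and it sits exactly at the point where the paper's new idea enters.

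The sign in your key inequality is backwards. From $u^k(0,\cdot)\ge \phi_\nu(0,\cdot)\ge -C$ you can only conclude $\int u^k(0)\bar m \ge -C\|\bar m\|_{L^1}$, i.e.\ an \emph{upper} bound on $-\int u^k(0)\bar m$. What you actually need to extract $\iint F^*(x,\alpha^k)\le C$ from $\calA(u^k,\alpha^k)=\iint F^*-\int u^k(0)\bar m\le C$ is the opposite inequality, namely an \emph{upper} bound on $\int u^k(0)\bar m$. No pointwise lower bound on $u^k$ can give this: along a minimizing sequence nothing a priori prevents $u^k(0)$ from being very large positive on $\{\bar m>0\}$, with $-\int u^k(0)\bar m$ very negative compensated by a very large $\iint F^*$, keeping $\calA$ bounded. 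The coupling between $\int u^k(0)\bar m$ and $\|\alpha^k\|_{L^{q'}}$ has to come from the PDE. The paper gets it by integrating the HJ equation for $u^k$ against the fixed weight $\bar m+1$ over $\Omega$ (the ``$+1$'' is precisely the replacement for the hypothesis $\bar m>0$ in \cite{CGPT}): this identity expresses $\int u^k(0)(\bar m+1)+\iint(\bar m+1)H(x,Du^k)$ in terms of $\iint(\bar m+1)\alpha^k$, and after using the lower bound on $u^k(0)$, the coercivity $H\ge C_0^{-1}|p|^r-C_0$, Young's inequality, and $F^*\ge C_0^{-1}\alpha^{q'}-C_0$, it simultaneously closes the bounds on $\|u^k(0,\cdot)\|_{L^1}$, $\|Du^k\|_{L^r}$ and $\|\alpha^k\|_{L^{q'}}$ in terms of $\calA(u^k,\alpha^k)$. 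Your proposal never produces this identity.

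A second, related problem: you propose to get $\|Du^k\|_{L^r(\Omega)}\le C$ by testing the constraint against the optimal density $m_\nu$. That would only control $\iint m_\nu\,H(x,Du^k)$, hence $\iint m_\nu |Du^k|^r$, which is useless where $m_\nu$ vanishes (and $m_\nu\in L^q$ need not be bounded, so even this pairing is delicate). Again the weight $\bar m+1\ge 1$ is what gives coercivity of $\iint|Du^k|^r$ over all of $\Omega$. Once these estimates are repaired, your weak-compactness passage to the limit is the same as the paper's (which simply defers to Steps 2--3 of \cite[Proposition 5.4]{CGPT}).
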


\begin{remark}
Let us comment that here $\|u_\nu(0,\cdot)\|_{L^1(\bbT^d)}$ is well-defined due to Lemma 5.1 \cite{CGPT} ($u_\nu$ has a ``trace'' in a weak sense).
\end{remark} 

\begin{proof}
The proof follows largely the one of Proposition 5.4 \cite{CGPT}. 

\quad {\bf Step 1}. From the first paragraph of their argument, there exists a minimizing sequence $(u_\nu^n,\alpha_\nu^n)\in \calK_{2,\nu}$ for problem \eqref{opt2} such that $u_\nu^n,\alpha_\nu^n$ are continuous, $\alpha^n_\nu\geq 0$, and $u_\nu^n$ is a viscosity solution to
\beq\lb{3.1}
-\partial_tu_\nu^n-\nu\Delta u_\nu^n+H(x,D u_\nu^n)= \alpha_\nu^n,\quad u_\nu^n(T,\cdot)= \bar{u}.
\eeq
Since $H$ is convex, this equality also holds in the sense of distribution \cite{16}.
Moreover, since $\alpha^n_\nu\geq 0$, comparing the solution with $-\|\bar u\|_\infty-\|H(\cdot,0)\|_\infty (T-t)$ yields
\beq\lb{3.2}
u_\nu^n(t,x)\geq -\|\bar u\|_\infty-\|H(\cdot,0)\|_\infty (T-t)\quad \text{ in }\Omega.
\eeq
It is clear that the right-hand side is independent of $\nu$.

\quad {\bf Step 2}. Next, we show some bounds for $\alpha_\nu^n$ and $u_\nu^n$ that are uniform in $n$ and $\nu$. We integrate \eqref{3.1} against $\bar m+1$ on $\Omega$ (instead of $\bar m$ that was used in \cite{CGPT}) to get
\begin{align*}
 \int_{\bbT^d}u_\nu^n(0,x)(\bar{m}(x)+1)dx+&\iint_{\Omega}\nu D\bar{m}Du_\nu^n+(\bar{m}+1)H(x,Du_\nu^n)dxdt\\
&=\iint_{\Omega}(\bar{m}(x)+1)\alpha_\nu^n(t,x)dxdt+\int_{\bbT^d}\bar{u}(\bar{m}+1)dx.  
\end{align*}
By (H2) and (H3) ($\bar m,\bar u\in\calC^1$), there exists $C\geq 1$ depending only on (H3) and $C_0$ such that
\beq\lb{3.5}
\begin{aligned}
\int_{\bbT^d}u_\nu^n(0,x)dx-&C\nu\iint_{\Omega}|Du_\nu^n|dxdt+ C^{-1}\iint_{\Omega}|Du_\nu^n|^rdxdt\\
&\qquad\qquad \leq -\int_{\bbT^d}u_\nu^n(0,x)\bar{m}(x)dx+C\iint_{\Omega}|\alpha_\nu^n|dxdt+C.    
\end{aligned}
\eeq
Then, we use \eqref{3.2} to get
\[
\int_{\bbT^d}u_\nu^n(0,x)dx\geq \int_{\bbT^d}|u_\nu^n(0,x)|dx-C
\]
for some $C$ depending only on $\|\bar u\|_\infty,\|H(\cdot,0)\|_\infty$ and $T$.
Next, using that $r> 1$ and $q'>1$, we get for any $\eps>0$ there is $C_\eps$ satisfying
\[
\iint_{\Omega}|Du_\nu^n|dxdt\leq \eps \iint_{\Omega}|Du_\nu^n|^rdxdt+C_\eps,
\]
\[
\iint_{\Omega}|\alpha_\nu^n|dxdt\leq \eps\iint_{\Omega}|\alpha_\nu^n|^{q'}dxdt+C_\eps.
\]
Applying these in \eqref{3.5} and taking $\eps$ to be sufficiently small, we obtain for some $C$ independent of $n$ and $\nu\in[0,1]$,
\begin{align*}
\int_{\bbT^d}|u_\nu^n(0,x)|dx& +(2C)^{-1}\iint_{\Omega}|Du_\nu^n|^rdxdt+(2C_0)^{-1}\iint_{\Omega}|\alpha_\nu^n|^{q'}dxdt\\
&\leq -\int_{\bbT^d}u_\nu^n(0,x)\bar{m}(x)dx+C_0^{-1}\iint_{\Omega}|\alpha_\nu^n|^{q'}dxdt+C      
\end{align*}
where $C_0\geq 1$ is from \eqref{F*}. It follows from \eqref{F*} and the definition of $\calA$ that
\begin{align*}
&\int_{\bbT^d}|u_\nu^n(0,x)|dx+(2C)^{-1}\iint_{\Omega}|Du_\nu^n|^rdxdt+(2C_0)^{-1}\iint_{\Omega}|\alpha_\nu^n|^{q'}dxdt\\
&\qquad\qquad \leq -\int_{\bbT^d}u_\nu^n(0,x)\bar{m}(x)dx+C_0^{-1}\iint_{\Omega}F^*(x,\alpha_\nu^n)dxdt+C_0+C \leq \calA(u^n_\nu,\alpha_\nu^n)+C'.    
\end{align*}

\quad {\bf Step 3}. Since $(u^n_\nu,\alpha_\nu^n)$ is a minimizing sequence, for all $n$ sufficiently large and $\bar\alpha_\nu:=-\partial_t\bar u-\nu\Delta \bar u+H(x,D \bar u)$ we obtain
\[
\calA(u^n_\nu,\alpha^n_\nu)\leq \sup_{\nu\in[0,1]}\inf_{(u,\alpha)\in\calK_{2,\nu}}\calA(u,\alpha)+1\leq \sup_{\nu\in[0,1]}\calA(\bar{u},\bar{\alpha}_\nu)+1,
\]
which is finite. So there exists $C>0$ independent of $n$ and $\nu\in[0,1]$ such that for all large $n$,
\beq\lb{3.3}
\int_{\bbT^d}|u_\nu^n(0,x)|dx+\iint_{\Omega}|Du_\nu^n|^rdxdt+\iint_{\Omega}|\alpha_\nu^n|^{q'}dxdt\leq C.
\eeq
With the uniform bound, one can take weak limit of $(u^n_\nu,\alpha^n_\nu)$ along a subsequence of $n\to\infty$. Following Step 2 and Step 3 in Proposition 5.4 \cite{CGPT}, we obtain that the weak limit $(u_\nu,\alpha_\nu)$ is a minimizer of the optimization problem \eqref{opt2}. Finally, \eqref{3.4} follows from \eqref{3.3}.
\end{proof}

\quad The qualitative vanishing viscosity limit is studied in \cite[Theorem 6.5]{CGPT}. We state it below.

\begin{theorem}[{\rm }\cite{CGPT}]\lb{T.2.2}
Let $(u_\nu,m_\nu)$ be a weak solution to \eqref{1.1} with $\nu> 0$. Then $m_\nu$ converges strongly to $m$ in $L^q(\Omega)$ as $\nu\to 0$, and $u_\nu$ converges weakly up to a subsequence to $u$ in $L^\gamma(\Omega)$, where the pair $(u,m)$ is a weak solution to \eqref{1.1} with $\nu=0$.
\end{theorem}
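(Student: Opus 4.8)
The natural route is to exploit the variational duality of Theorem \ref{T.2.1}, the uniform estimates of Proposition \ref{P.3.3}, and the equivalence between weak solutions and minimizers in Theorem \ref{T.3.2}, arguing as in a $\Gamma$-convergence proof for the pair of problems \eqref{opt1}--\eqref{opt2}. For $\nu>0$ let $(m_\nu,w_\nu)\in\calK_{1,\nu}$ minimize \eqref{opt1} and $(u_\nu,\alpha_\nu)\in\calK_{2,\nu}$ minimize \eqref{opt2}; by Theorem \ref{T.3.2} the pair $(u_\nu,m_\nu)$ is the weak solution of \eqref{1.1}, with $w_\nu=-m_\nu D_pH(\cdot,Du_\nu)$ and $\alpha_\nu=f(\cdot,m_\nu)$, and $\calB(m_\nu,w_\nu)=-\calA(u_\nu,\alpha_\nu)=:V_\nu$. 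Write $V_0:=\min_{\calK_{1,0}}\calB=-\inf_{\calK_{2,0}}\calA$ for the common value at $\nu=0$ given by Theorem \ref{T.2.1}.

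\textbf{Compactness and the limit constraints.} By \eqref{3.4} and Theorem \ref{T.2.1}, the families $\{u_\nu\},\{Du_\nu\},\{\alpha_\nu\},\{m_\nu\},\{w_\nu\}$ are bounded in $L^\gamma(\Omega),L^r(\Omega),L^{q'}(\Omega),L^q(\Omega),L^{\frac{r'q}{r'+q-1}}(\Omega)$ respectively, uniformly for $\nu\in(0,1]$; one also bounds $\iint_\Omega m_\nu H^*(x,-w_\nu/m_\nu)\,dxdt$ and the traces $u_\nu(0,\cdot),m_\nu(T,\cdot)$. Along a subsequence all of these converge weakly (weakly-$\ast$) to $u,Du,\alpha,m,w$ and to the corresponding traces. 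Since $u_\nu,m_\nu$ are bounded and $\nu\to0$, the terms $\nu\Delta u_\nu$ and $\nu\Delta m_\nu$ vanish in $\mathcal{D}'(\Omega)$; passing to the distributional limit in the (in)equalities defining $\calK_{2,\nu}$ and $\calK_{1,\nu}$, and using weak continuity of the trace maps together with Lemma 5.1 of \cite{CGPT}, we get $(u,\alpha)\in\calK_{2,0}$ and $(m,w)\in\calK_{1,0}$.

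\textbf{Convergence of the optimal values and identification of the limit.} To see $V_\nu\to V_0$: given a smooth $(\tilde m,\tilde w)\in\calK_{1,0}$ with $\calB(\tilde m,\tilde w)\le V_0+\eps$ (dense by mollification), the pair $(\tilde m,\tilde w+\nu D\tilde m)$ lies in $\calK_{1,\nu}$ and $\calB(\tilde m,\tilde w+\nu D\tilde m)\to\calB(\tilde m,\tilde w)$ as $\nu\to0$, so $\limsup_{\nu\to0}V_\nu\le V_0$; a symmetric construction on \eqref{opt2} (regularize a near-optimal $(\tilde u,\tilde\alpha)$ and absorb the $\nu\Delta\tilde u$ correction into the right-hand side) gives $\liminf_{\nu\to0}V_\nu\ge V_0$. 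Weak lower semicontinuity of the convex functional $\calB$ then yields $\calB(m,w)\le\liminf V_\nu=V_0\le\calB(m,w)$, so $(m,w)$ is \emph{the} unique minimizer of \eqref{opt1} at $\nu=0$ and $\calB(m_\nu,w_\nu)\to\calB(m,w)$. Splitting $\calB$ into the potential part $\iint_\Omega F(x,m_\nu)\,dxdt$ and the kinetic part $\iint_\Omega m_\nu H^*(x,-w_\nu/m_\nu)\,dxdt$ — each weakly l.s.c.\ — forces each piece to converge to its limit value; the uniform convexity of $F(x,\cdot)$ coming from \eqref{F} upgrades $m_\nu\rightharpoonup m$ to strong convergence in $L^q(\Omega)$. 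Continuity of $f$ with the uniform $L^{q'}$ bound gives $\alpha=f(\cdot,m)$, the energy identity (iii) in the definition of weak solution passes to the limit via $V_\nu\to V_0$ and trace convergence, and Theorem \ref{T.3.2} at $\nu=0$ identifies $(u,m)$ as a weak solution of \eqref{1.1} with $\nu=0$. Since $m$ is uniquely determined, the whole family $m_\nu$ converges strongly to $m$ in $L^q(\Omega)$, while $u_\nu\rightharpoonup u$ in $L^\gamma(\Omega)$ only along the subsequence (as $u$ need not be unique off $\{m>0\}$).

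\textbf{Main difficulty.} The crux is the last upgrade: passing from convergence of the \emph{value} of $\calB$ plus weak convergence of $m_\nu$ to \emph{strong} $L^q$ convergence. This requires a clean separation of the potential and kinetic parts (the kinetic integrand is only jointly convex in $(m,w)$, and the convention at $m=0$ must be handled), followed by a quantitative convexity estimate for $F$ — elementary when $q\ge2$ but requiring the $L^q$ uniform-convexity (Clarkson-type) inequality when $1<q<2$. Secondary technical points are the density of smooth near-optimal competitors used in the value comparison, and the stability of the boundary terms in (iii) under the weak limits, for which Lemma 5.1 of \cite{CGPT} is the right tool.
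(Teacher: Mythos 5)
The paper itself offers no proof of Theorem \ref{T.2.2}: it is quoted from \cite{CGPT} (their Theorem 6.5). Your variational architecture --- compactness from the uniform bounds of Proposition \ref{P.3.3}, convergence of the optimal values, identification of the weak limit as the unique minimizer of \eqref{opt1}, and an energy-convergence upgrade to strong $L^q$ convergence --- is essentially the strategy of the cited reference, so the route is the right one and the skeleton is correct.

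Two points need repair. First, the justification of the step you correctly single out as the crux is off: \eqref{F} is a two-sided growth bound and does \emph{not} give uniform convexity of $F(x,\cdot)$; what (H1) gives is strict convexity (from strict monotonicity of $f$). The standard upgrade is therefore a Visintin-type argument: $m_\nu\rightharpoonup m$ in $L^q$ together with $\iint_\Omega F(x,m_\nu)\,dxdt\to\iint_\Omega F(x,m)\,dxdt$ and strict convexity give convergence in measure, and then the lower bound in \eqref{F} plus the energy convergence give equi-integrability of $m_\nu^q$, hence strong $L^q$ convergence by Vitali. Clarkson inequalities apply to $\|\cdot\|_{L^q}$, not to a general $F$ satisfying only (H1). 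Second, the ``symmetric construction on \eqref{opt2}'' is both unnecessary and problematic: it is exactly the construction of Lemma \ref{L.4.4}, which needs the extra hypothesis (H4) on $H$ that is not assumed in Theorem \ref{T.2.2}. You do not need it: once $\limsup_\nu V_\nu\le V_0$ is known from the \eqref{opt1}-side recovery sequence and the weak limit $(m,w)$ is shown admissible in $\calK_{1,0}$, weak lower semicontinuity gives $V_0\le\calB(m,w)\le\liminf_\nu V_\nu\le\limsup_\nu V_\nu\le V_0$, which yields both $V_\nu\to V_0$ and minimality of $(m,w)$ (and then minimality of $(u,\alpha)$ for \eqref{opt2} by duality). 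Finally, be aware that the \eqref{opt1}-side recovery sequence is not disposed of by the phrase ``dense by mollification'': mollification perturbs the fixed initial datum $\bar m$, and the kinetic term $\tilde m\,H^*(x,-(\tilde w+\nu D\tilde m)/\tilde m)$ is genuinely delicate near $\{\tilde m=0\}$ (for $r<2$ the quantity $|D\tilde m|^{r'}/\tilde m^{r'-1}$ need not even be integrable for smooth nonnegative $\tilde m$). These are precisely the technicalities that occupy \cite{CGPT}, and any self-contained proof must address them.
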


\section{Vanishing Viscosity Limit -- Local Coupling}
\lb{S4}

\quad To study the convergence rate of vanishing viscosity limit of mean field games, we further need the following regularity and coercivity assumptions.

\begin{itemize}
\item[(H4)] (Regularity condition) There exist $C_0,C_1,C_2\geq 0$ such that for all $x,y\in\bbT^d$ and $m\geq 0$,
\[
|F(x+y,m)-F(x,m)|\leq C_0(1+m^{q})|y|,
\]
and for any $p\in\bbR^d$ and $\sigma\in(0,1]$,
\beq\lb{4.0}
\sup_{z\in B_\sigma} H\left(x+z,\frac{p}{1+C_1\sigma}\right)\leq \frac{H(x,p)}{1+C_1\sigma}+C_2\sigma.
\eeq

\smallskip

\item[(H5-1)] (Coecivity condition on $H$) There are $J_1,J_1^*:\bbT^d\times\bbR^d\to\bbR^d$ and $c_0>0$ such that for all $p,\xi\in\bbR^d$ we have
\beq\lb{H51}
H(x,p)+H^*\left(x,\xi\right)-p\cdot \xi\geq  c_0\,|J_1(x,p)-J_1^*(x,\xi)|^2.
\eeq

\smallskip

\item[(H5-2)] (Coecivity condition on $F$) There are $J_2,J_2^*:\bbT^d\times\bbR\to\bbR$ and $c_0>0$ such that for all $m,\alpha\in\bbR$ we have
\beq\lb{H52}
F(x,m)+F^*(x,\alpha)-m\alpha\geq c_0\,|J_2(x,m)-J_2^*(x,\alpha)|^2.
\eeq
\end{itemize}
\begin{remark}\lb{R.4.1}
1. If $H(x,p)$ is only a function of $p$, then we can take $C_1=C_2=0$ in (H4) and \eqref{4.0} is trivial. 

\quad The condition \eqref{4.0} on $H$ is stronger than the one on $F$, but it is still satisfied by a large class of Hamiltonian. For example, if $H(x,p)=h_1(x)|p|^r+h_2(x)$ with two Lipschitz continuous functions $h_1,h_2:\bbT^d\to\bbR$ and $h_1\geq 0$, then \eqref{4.0} holds with $C_1:=\|Dh_1\|_{\infty}/(r-1)$ and $C_2:=\|Dh_2\|_{\infty}$.

\smallskip

\quad 2. Note that $F^*(x,\alpha)=0$ for $\alpha\leq 0$. For $\alpha\geq 0$, there exists a unique $m_\alpha\geq 0$ such that
\[
F^*(x,\alpha)=\sup_{m\geq 0}\{\alpha m-F(x,m)\}=\alpha m_\alpha-F(x,m_\alpha).
\]
By \eqref{F}, $m_\alpha\geq0$ satisfies
\beq\lb{F*1'}
 m_\alpha^q\leq C\alpha^{q'}+C\quad\text{ for some $C>0$}.
\eeq
So it follows from {\rm(H4)} that for all $x,y\in\bbT^d$
and $\alpha\geq 0$,
\beq\lb{F*1}
|F^*(x+y,\alpha)-F^*(x,\alpha)|\leq C(1+\alpha^{q'})|y|\quad\text{for some $C>0$.}
\eeq

\quad It is not hard to see that $F^*$ is increasing in $\alpha$. Indeed for any $\sigma\geq 0$, 
\begin{align}\lb{F*2}
F^*(x,\alpha+\sigma)\geq (\alpha+\sigma)m_\alpha-F(x,m_\alpha)    \geq F^*(x,\alpha).
\end{align}
Moreover, by \eqref{F*1'}, for any $\sigma\geq 0$,
\beq\lb{F*3}
\begin{aligned}
F^*(x,\alpha)
\geq F^*(x,\alpha+\sigma)-\sigma m_{\alpha+\sigma}\geq F^*(x,\alpha+\sigma)-C\sigma(1+(\alpha+\sigma)^{q'/q}),
\end{aligned}
\eeq
where $m_{\alpha+\sigma}$ is such that $F^*(x,\alpha+\sigma)=\alpha m_{\alpha+\sigma}-F(x,m_{\alpha+\sigma})$

\smallskip

\quad 3. 
Let $\tau:\bbT^d\to (0,\infty)$ and $h:\bbT^d\to\bbR$ be continuous. If $H(x,p)=\frac{1}{r}|\tau(x)p|^r+h(x)$ for some $r>1$, then $H^*(x,\xi)=\frac{1}{r'}|\xi/\tau(x)|^{r'}-h(x)$. In this case, \eqref{H51} holds with 
\[
J_1(x,p)=\tau(x)^{r/2}|p|^{r/2-1}p, \quad J_1^*(x,\xi)=\tau(x)^{-r'/2}|\xi|^{r'/2-1}\xi\quad\text{and}\quad c_0= (\max\{r,r'\})^{-1}.
\]
If $F(x,m)=\frac{1}{q}(\tau(x)m)^q$ for some $q>1$, then $F^*(x,\alpha)=\frac{1}{q'}(\alpha/\tau(x))^{q'}$, and \eqref{H52} holds with
\[
J_2(x,m)=\tau(x)^{q/2}m^{q/2},\quad J_2^*(x,\alpha)=\tau(x)^{-q'/2}\alpha^{q'/2}\quad\text{and}\quad c_0= (\max\{q,q'\})^{-1}.
\]
We only prove \eqref{H52} with this selection of functions in the appendix. The proof for \eqref{H51} is almost the same.

\quad Finally, let us comment that in the above examples the value of $c_0$ is optimal. Indeed, if $\tau\equiv 1,q=q'=2$, it is easy to see that $c_0\leq \frac12$. The optimality can also be seen in the proof.
\end{remark}

\quad The following lemma is a consequence of the coercivity condition. The proof is identical to the one of Lemma 3.2 \cite{S17}, and we skip it.

\begin{lemma}\lb{L.4.1}
Assume {\rm(H1)--(H4)}, and for $\nu\geq 0$, let $(u,\alpha)\in \calK_{2,\nu}$ and $(m,w)\in \calK_{1,\nu}$. If {\rm(H5-1)} holds, we have 
\beq\lb{4.6}
\begin{aligned}
\calA(u,\alpha)+\calB(m,w)
\geq c_0\|m^{1/2}(J_1(\cdot,Du)-J_1^*(\cdot,-w/m))\|^2_{L^2(\Omega)}.
\end{aligned}
\eeq
And if {\rm(H5-2)} holds, we have 
\beq\lb{4.6'}
\begin{aligned}
\calA(u,\alpha)+\calB(m,w)\geq c_0\left\|J_2(\cdot,m)-J_2^*(\cdot,\alpha)\right\|^2_{L^2(\Omega)}.
\end{aligned}
\eeq
\end{lemma}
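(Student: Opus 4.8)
The plan is to add the two functionals, use the constraints defining $\calK_{1,\nu}$ and $\calK_{2,\nu}$ to rewrite $\calA(u,\alpha)+\calB(m,w)$ as a space--time integral of two Fenchel-type expressions, and then invoke (H5-1), resp.\ (H5-2), pointwise in $(t,x)$.

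First I would test the distributional inequality $-\partial_t u-\nu\Delta u+H(x,Du)\le \alpha$ against the nonnegative density $m$ and integrate over $\Omega$. Integrating by parts in $t$ and in $x$ (the torus has no boundary) and then substituting the continuity equation $\partial_t m-\nu\Delta m=-\nabla\cdot w$, the time-derivative and second-order terms combine with the transport term, leaving
\[
-\int_{\bbT^d} u(T,x)\,m(T,x)\,dx+\int_{\bbT^d}\bar m(x)\,u(0,x)\,dx+\iint_\Omega \big(Du\cdot w+mH(x,Du)\big)\,dx\,dt\ \le\ \iint_\Omega m\alpha\,dx\,dt .
\]
Using $u(T,\cdot)\le\bar u$ with $m(T,\cdot)\ge 0$, and $m(0,\cdot)=\bar m$, this rearranges to
\[
-\int_{\bbT^d}\bar m(x)\,u(0,x)\,dx+\int_{\bbT^d}\bar u(x)\,m(T,x)\,dx\ \ge\ \iint_\Omega\big(mH(x,Du)+Du\cdot w-m\alpha\big)\,dx\,dt .
\]

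Next I would add $\iint_\Omega\big(F^*(x,\alpha)+mH^*(x,-w/m)+F(x,m)\big)\,dx\,dt$ to both sides; the left-hand side becomes exactly $\calA(u,\alpha)+\calB(m,w)$, so that
\[
\calA(u,\alpha)+\calB(m,w)\ \ge\ \iint_\Omega\big(F^*(x,\alpha)+F(x,m)-m\alpha\big)\,dx\,dt+\iint_\Omega m\Big(H(x,Du)+H^*\big(x,-\tfrac{w}{m}\big)-Du\cdot\big(-\tfrac{w}{m}\big)\Big)\,dx\,dt .
\]
On $\{m=0\}$ one may assume $w=0$, since otherwise $\calB=+\infty$ and there is nothing to prove, and both integrands vanish there. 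For \eqref{4.6} I would keep the $F$-integrand nonnegative by the Fenchel--Young inequality and bound the $H$-integrand from below via (H5-1) with $p=Du(t,x)$, $\xi=-w(t,x)/m(t,x)$, then integrate against $m\ge 0$. For \eqref{4.6'} I would instead keep the $H$-integrand nonnegative by Fenchel--Young (for $H$, using $m\ge 0$) and bound the $F$-integrand from below via (H5-2) with $(m,\alpha)=(m(t,x),\alpha(t,x))$.

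The main obstacle is not this computation but making its first two steps rigorous at the available regularity: one must check that $m$ is an admissible test function for the distributional HJ inequality, that every space--time integral above is finite (using $Du\in L^r$, $m\in L^q$, $\alpha\in L^{q'}$ and $mH^*(\cdot,-w/m)\in L^1$), and that the boundary terms $\int_{\bbT^d}\bar m\,u(0,\cdot)$ and $\int_{\bbT^d}\bar u\,m(T,\cdot)$ are meaningful as traces. These are precisely the points established in \cite[Section~3 and Lemma~5.1]{CGPT}, and with them in hand the argument is word for word that of \cite[Lemma~3.2]{S17}; I would quote those for the integration-by-parts identity rather than reprove it.
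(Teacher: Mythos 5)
Your proposal is correct and is essentially the argument the paper has in mind: the paper simply states that the proof is identical to Lemma 3.2 of \cite{S17}, and what you have written out (testing the distributional HJ inequality against $m$, using the continuity equation to produce the $Du\cdot w$ term, adding the Fenchel terms to assemble $\calA+\calB$, and applying (H5-1)/(H5-2) pointwise, with the regularity and trace issues delegated to \cite{CGPT}) is precisely that argument.
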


\quad For $\nu\geq 0$, let $(m_\nu,w_\nu)\in\calK_{1,\nu}$ be the minimizer of \eqref{opt1} and $(u_\nu,\alpha_\nu)\in \calK_{2,\nu}$ be a minimizer of \eqref{opt2}. Theorem \ref{T.3.2} and Theorem \ref{T.2.1} yield that $(u_\nu,m_\nu)$ is a weak solution to \eqref{1.1}, and
\beq\lb{4.9}
\calA(u_\nu,\alpha_\nu)+\calB(m_\nu,w_\nu)=0.
\eeq
The following result is a direct consequence of Lemma \ref{L.4.1} and \eqref{4.9}.

\begin{corollary}
We have for any $\nu\geq 0$ and for a.e. $(t,x)\in\Omega$ such that
\beq\lb{4.8}
J_1^*(x,-w_\nu/m_\nu)=J_1(x,Du_\nu)\quad\text{and}\quad J_2^*(x,\alpha_\nu)=J_2(x,m_\nu).
\eeq
\end{corollary}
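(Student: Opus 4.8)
The plan is to squeeze the two nonnegative quantities appearing in Lemma \ref{L.4.1} between that lemma and the duality identity \eqref{4.9}. Since $(u_\nu,\alpha_\nu)\in\calK_{2,\nu}$ and $(m_\nu,w_\nu)\in\calK_{1,\nu}$ are admissible for the two estimates in Lemma \ref{L.4.1}, and since \eqref{4.9} states that $\calA(u_\nu,\alpha_\nu)+\calB(m_\nu,w_\nu)=0$, both right‑hand sides in \eqref{4.6} and \eqref{4.6'} are simultaneously bounded above by $0$ and below by $0$, hence vanish.

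Concretely, under (H5-1) we get
\[
0=\calA(u_\nu,\alpha_\nu)+\calB(m_\nu,w_\nu)\ge c_0\,\bigl\|m_\nu^{1/2}\bigl(J_1(\cdot,Du_\nu)-J_1^*(\cdot,-w_\nu/m_\nu)\bigr)\bigr\|_{L^2(\Omega)}^2\ge 0,
\]
so $m_\nu^{1/2}\bigl(J_1(x,Du_\nu)-J_1^*(x,-w_\nu/m_\nu)\bigr)=0$ for a.e.\ $(t,x)\in\Omega$, i.e.\ $J_1^*(x,-w_\nu/m_\nu)=J_1(x,Du_\nu)$ a.e.\ on $\{m_\nu>0\}$. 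Similarly, under (H5-2),
\[
0=\calA(u_\nu,\alpha_\nu)+\calB(m_\nu,w_\nu)\ge c_0\,\bigl\|J_2(\cdot,m_\nu)-J_2^*(\cdot,\alpha_\nu)\bigr\|_{L^2(\Omega)}^2\ge 0,
\]
whence $J_2^*(x,\alpha_\nu)=J_2(x,m_\nu)$ for a.e.\ $(t,x)\in\Omega$. This is exactly \eqref{4.8}.

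I do not expect a genuine obstacle; the only bookkeeping point is the degenerate set $\{m_\nu=0\}$ for the first identity, where the weight $m_\nu^{1/2}$ destroys the information. There one uses that finiteness of $\calB(m_\nu,w_\nu)$ (the pair being a minimizer of \eqref{opt1}) forces $w_\nu=0$ a.e.\ on $\{m_\nu=0\}$ by the convention for $mH^*(x,-w/m)$, together with the a.e.\ identifications $w_\nu=-m_\nu D_pH(\cdot,Du_\nu)$ and $\alpha_\nu=f(\cdot,m_\nu)$ from Theorem \ref{T.3.2}(1); accordingly the first equality in \eqref{4.8} is read a.e.\ on $\{m_\nu>0\}$, and the whole statement follows at once.
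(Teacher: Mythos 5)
Your argument is exactly the paper's: the corollary is stated there as a direct consequence of Lemma \ref{L.4.1} and the duality identity \eqref{4.9}, obtained by squeezing the nonnegative right-hand sides of \eqref{4.6} and \eqref{4.6'} to zero. Your extra remark about the degenerate set $\{m_\nu=0\}$ is a sensible clarification (the first identity in \eqref{4.8} is only meaningful where $m_\nu>0$ anyway), but it does not change the argument.
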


\quad Now we prove the main theorem of the section.

\begin{theorem}\lb{T.4.3}
Assume {\rm(H1)--(H4)}, and let $(m_\nu,\alpha_\nu)$ be a weak solution to \eqref{1.1} with $\nu\geq 0$. 
Write $(u,\alpha,m,w)=(u_0,\alpha_0,m_0,w_0)$, and
\beq\lb{beta}
\beta:=\max\left\{\frac{q'r'}{q'+r'},1\right\}(d+1)-d\geq 1.
\eeq
Then if {\rm(H5-1)} holds, there exists $C>0$ such that for all $\nu\in[0,1]$ we have
\[
\iint_\Omega  |J_1(\cdot,Du_\nu) -J_1(\cdot,Du)|^2m\,dxdt \leq C\nu^{\frac{1}{1+\beta }},
\]    
and if {\rm(H5-2)} holds, for all $\nu\in[0,1]$ we have
\[
\iint_\Omega |J_2(\cdot,m_\nu)-J_2(\cdot,m)|^2dxdt \leq C\nu^{\frac{1}{1+\beta }}.
\]    
\end{theorem}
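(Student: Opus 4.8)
The plan is to exploit the duality between \eqref{opt1} and \eqref{opt2} together with the coercivity Lemma \ref{L.4.1}, evaluating the duality gap not at the optimal pairs for a single $\nu$ but by cross-pairing the minimizers for $\nu=0$ and for $\nu>0$. Concretely, I would first observe that $(m_\nu,w_\nu)\in\calK_{1,\nu}$ and $(u,\alpha)\in\calK_{2,0}$ are not directly compatible because of the viscosity term; to fix this I would perturb the first-order data into an admissible competitor for the $\nu$-problem. One natural route: starting from $(u,\alpha)$ solving (in the viscosity/distributional sense) $-\partial_t u+H(x,Du)\le\alpha$, produce a pair $(\tilde u_\nu,\tilde\alpha_\nu)\in\calK_{2,\nu}$ by a spatial sup-convolution at scale $\sigma=\sigma(\nu)$ combined with the rescaling $Du\mapsto Du/(1+C_1\sigma)$ from (H4), so that after adding $\nu\Delta$ one still has a subsolution at the cost of an error controlled by \eqref{4.0} and the regularization parameter; symmetrically, mollify $(m,w)$ in space and solve the heat-regularized continuity equation to land in $\calK_{1,\nu}$. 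Then $\calA(\tilde u_\nu,\tilde\alpha_\nu)+\calB(m_\nu,w_\nu)\ge 0$ and $\calA(u_\nu,\alpha_\nu)+\calB(\tilde m_\nu,\tilde w_\nu)\ge 0$, while by optimality $\calA(u_\nu,\alpha_\nu)+\calB(m_\nu,w_\nu)=0$ and $\calA(u,\alpha)+\calB(m,w)=0$. Subtracting, the sum of the two cross-duality gaps equals $\bigl[\calA(\tilde u_\nu,\tilde\alpha_\nu)-\calA(u,\alpha)\bigr]+\bigl[\calB(\tilde m_\nu,\tilde w_\nu)-\calB(m,w)\bigr]$, which I expect to bound by $C(\sigma+\nu/\sigma^2+\sigma^{?})$ using (H4), \eqref{F*1}, \eqref{F*3}, and the uniform $L^q\times L^{q'}$ bounds from Theorem \ref{T.2.1} and Proposition \ref{P.3.3}.

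The second step is to turn these two nonnegative cross-gaps into the desired $L^2$ estimates. Applying \eqref{4.6} (resp. \eqref{4.6'}) to the pair $(u,\alpha;m_\nu,w_\nu)$ gives
\[
c_0\bigl\|m_\nu^{1/2}\bigl(J_1(\cdot,Du)-J_1^*(\cdot,-w_\nu/m_\nu)\bigr)\bigr\|_{L^2(\Omega)}^2\le \calA(u,\alpha)+\calB(m_\nu,w_\nu),
\]
and by the Corollary, $J_1^*(\cdot,-w_\nu/m_\nu)=J_1(\cdot,Du_\nu)$ a.e., so the left side is exactly $c_0\iint_\Omega|J_1(\cdot,Du_\nu)-J_1(\cdot,Du)|^2 m_\nu\,dxdt$; similarly \eqref{4.6'} with $J_2^*(\cdot,\alpha_\nu)=J_2(\cdot,m_\nu)$ produces $c_0\iint_\Omega|J_2(\cdot,m_\nu)-J_2(\cdot,m)|^2\,dxdt$ (this one comes with the Lebesgue weight, not $m_\nu$, which is why the $J_2$-statement is clean). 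The remaining issue is that Theorem \ref{T.4.3} asks for the $m$-weight, not the $m_\nu$-weight, in the $J_1$-estimate; I would reconcile this either by running the cross-pairing in the other direction — applying \eqref{4.6} to $(u_\nu,\alpha_\nu;m,w)$, which directly yields the $m$-weighted quantity $\iint_\Omega|J_1(\cdot,Du)-J_1(\cdot,Du_\nu)|^2 m\,dxdt$ up to $\calA(u_\nu,\alpha_\nu)+\calB(m,w)$ — or by interpolating using the strong $L^q$ convergence $m_\nu\to m$ from Theorem \ref{T.2.2}; the cleaner choice is the former, so only the competitor-construction on the $\calK_{2,\nu}$ side (perturbing $u_\nu$ toward an element of $\calK_{2,0}$, or rather mollifying $(m,w)$ into $\calK_{1,\nu}$) is needed.

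The final step is the optimization over $\sigma$: with an error bound of the shape $\calA(\tilde\cdot)-\calA(\cdot)+\calB(\tilde\cdot)-\calB(\cdot)\le C(\sigma+\nu\sigma^{-\beta-1})$ — where the exponent $\beta+1$ in the viscous term is precisely where the dimension enters, via the parabolic Sobolev/Gagliardo–Nirenberg embedding needed to control $\iint \nu|D^2(\text{mollified }u)|$ or $\iint\nu|Dm\cdot Du|$ against the $L^q\times L^{q'}$ norms of $(m,\alpha)$ and the rescaling scale $\sigma$, giving the combination $\tfrac{q'r'}{q'+r'}$ of conjugate exponents in \eqref{beta} — balancing yields $\sigma\sim\nu^{1/(2+\beta)}$... and one checks this reproduces $\nu^{1/(1+\beta)}$ after accounting for the square on the $L^2$ norm (the gap controls the \emph{square} of the target quantity). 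I expect the main obstacle to be exactly the competitor construction: making the spatially regularized, viscosity-corrected pair genuinely admissible for $\calK_{2,\nu}$ (i.e. a bona fide distributional subsolution of the $\nu$-HJ inequality with the right terminal condition) while keeping the perturbation of $\calA$ quantitatively controlled in terms of $\sigma$ and $\nu$ — this is where (H4)'s precise form \eqref{4.0}, the one-sided bounds \eqref{F*2}–\eqref{F*3} on $F^*$, and the sharp parabolic regularity/embedding bookkeeping all have to be combined carefully, and it is the step where the value of $\beta$ is actually pinned down.
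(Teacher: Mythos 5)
Your architecture is the paper's: regularize the first-order data into an admissible competitor for the viscous problem \eqref{opt2}, bound the gap $\calA(u_\nu,\alpha_\nu)-\calA(u,\alpha)$, add $\calB(m,w)=-\calA(u,\alpha)$, and convert via the coercivity Lemma \ref{L.4.1} and the consistency relations \eqref{4.8} applied to the cross-pair $(u_\nu,\alpha_\nu;m,w)$ --- which is exactly how the paper produces the $m$-weighted $J_1$-estimate and the unweighted $J_2$-estimate. Only this one construction is needed; your symmetric regularization of $(m,w)$ into $\calK_{1,\nu}$ and the ``sum of two cross-gaps'' bookkeeping are superfluous. (The paper also runs the construction on a minimizing sequence $(u^n,\alpha^n)$ of continuous viscosity solutions satisfying the HJ \emph{equation}, rather than on the weak minimizer itself, and passes $n\to\infty$ at the end; this matters for making the mollification step legitimate.)

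Three concrete problems remain. (i) The parenthetical closing your second paragraph gets the direction of the required construction backwards: both ``perturbing $u_\nu$ toward an element of $\calK_{2,0}$'' and ``mollifying $(m,w)$ into $\calK_{1,\nu}$'' yield competitors that bound $\calA(u_\nu,\alpha_\nu)+\calB(m,w)$ from \emph{below} (it is already $\ge 0$); the needed \emph{upper} bound requires pushing the $\nu=0$ data for \eqref{opt2} into $\calK_{2,\nu}$, which is what your first paragraph correctly describes and what the paper does via $u^n_\eps=\varphi_\eps*u^n/(1+C_1\eps)+\dots$, absorbing $-\nu\Delta\varphi_\eps*u^n$ into $\alpha^n_\eps$. (ii) Sup-convolution is not a workable substitute for mollification here: the resulting right-hand side involves $\alpha$ evaluated at the shifted contact point, and since $\alpha$ is merely $L^{q'}$ one cannot compare $\iint F^*(x,\tilde\alpha)$ with $\iint F^*(x,\alpha)$; mollification works precisely because Jensen's inequality plus Young's convolution inequality handle $L^{q'}$ data, and it is Young's inequality --- $\|\Delta\varphi_\eps*u^n\|_{L^{q'}}\le\|D\varphi_\eps\|_{L^{\beta_1}}\|Du^n\|_{L^r}$ with $\beta_1=q'r'/(q'+r')$ --- not a parabolic Sobolev embedding, that pins down $\beta$. (iii) Your final balance is off: the error is $C(\eps+\nu\eps^{-\beta})$, optimized at $\eps=\nu^{1/(1+\beta)}$, and the resulting gap bounds the \emph{squared} $L^2$ quantity directly (the theorem's left-hand side is already the square), so no further square root is taken; your $\sigma\sim\nu^{1/(2+\beta)}$ followed by ``accounting for the square'' does not reproduce the stated exponent.
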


\begin{proof}
We use the optimization structure of \eqref{1.1} described in Theorem \ref{T.3.2}. 

\quad {\bf Step 1}. By Theorem \ref{T.2.1} and the proof of Proposition \ref{P.3.3}, we can take a minimizing sequence $(u^n,\alpha^n)\in\calK_{2,0}$ such that 
 $\alpha^n\geq 0$, and
\beq\lb{4.13'}
-\partial_tu^n+H(x,D u^n)= \alpha^n,\quad u^n(T,\cdot)= \bar{u}
\eeq
in the sense of distribution, and
\[
\calA(u^n,\alpha^n)\leq \calA(u,\alpha)+1/n.
\]
Due to \eqref{3.4}, there exists $C>0$ such that for all $n$ sufficiently large we have
\beq\lb{4.3}
\|Du^n\|_{L^r(\Omega)}+\|\alpha^n\|_{L^{q'}(\Omega)}\leq C.
\eeq

\quad {\bf Step 2}. The goal is to show that
\beq\lb{4.1}
\calA(u_\nu,\alpha_\nu)\leq \calA(u,\alpha)+C\nu^{\frac{1}{1+\beta }}.
\eeq
Below we construct $(u_\eps^n,\alpha_\eps^n)$ from $(u^n,\alpha^n)$ that can be used as a candidate for the optimization problem \eqref{opt2}.

\quad Let $\varphi:\bbR^d\to [0,\infty)$ be a mollifier i.e., $\varphi$ is smooth, compactly supported in $B_1$, and $\int_{\bbR^d}\varphi(x) dx=1$. Take $\eps:=\nu^{1/(1+\beta)}$ and set $\varphi_\eps:=\eps^{-d}\varphi(\cdot/\eps)$.
For $C_1,C_2$ from (H4), we define 
\beq\lb{4.10}
u^{n}_{\eps}:=\frac{\varphi_\eps*u^n}{1+C_1\eps}-(1+C_1)\eps\|\bar u\|_{\calC^1}+C_2\eps(t-T).
\eeq
It is direct to see from the definition that $u^{n}_{\eps}(T,\cdot)\leq \bar u$ as $\varphi_\eps$ is supported in $B_\eps$. 

\quad {\bf Step 3}. Because $H(x,\cdot)$ is convex, Jensen's inequality yields for each fixed $x_0\in\bbT^d$,
\[
H\left(x_0,D\left(\frac{\varphi_\eps*u^n}{1+C_1\eps}\right)(t,x)\right)\leq \varphi_\eps * H\left(x_0,D \left(\frac{u^n}{1+C_1\eps}\right)\right)(t,x).
\]
By \eqref{4.0} with $\eps$ in place of $\sigma$, we obtain
\[
H(x,Du^{n}_{\eps}(t,x))\leq \frac{\varphi_\eps * H(x,D u^n)(t,x)}{1+C_1\eps}+C_2\eps.
\]
We note that if $H$ is independent of $x$, then this holds with $C_1=C_2=0$.
We get from \eqref{4.13'} that
\beq\lb{4.15'}
-\partial_tu^{n}_{\eps}-\nu\Delta u^{n}_{\eps}+H(x,Du^{n}_{\eps})\leq \frac{\varphi_\eps*\alpha^n}{1+C_1\eps}-\frac{\nu}{1+C_1\eps}\Delta\varphi_\eps *u^n=:\alpha_\eps^{n}.
\eeq
In particular, we have $(u^{n}_{\eps},\alpha^{n}_\eps)\in\calA_{2,\nu}$. Consequently, as $(u_\nu,\alpha_\nu)$ is a minimizer of \eqref{opt2} over $\calK_{2,\nu}$, we obtain
\[
\calA(u_\nu,\alpha_\nu)\leq \calA(u^n_\eps,\alpha^n_\eps).
\]
It follows from the definition of $\eps$ that $\nu\eps^{-\beta}\leq 1$. We claim that 
\[
\calA(u_\nu,\alpha_\nu)\leq \calA(u^{n}_{\eps},\alpha_\eps^{n})\leq \calA(u^n,\alpha^n)+C\nu\eps^{-\beta}+C\eps.
\]
We postpone the proof to Lemma \ref{L.4.4} below. As a consequence of the claim, passing $n\to\infty$ yields \eqref{4.1}.

\quad {\bf Step 4}. Now, we combine \eqref{4.1} with \eqref{4.9} to get
\[
\calA(u_\nu,\alpha_\nu)+\calB(m,w)\leq \calA(u,\alpha)+\calB(m,w) +C\nu^{\frac{1}{1+\beta }}=C\nu^{\frac{1}{1+\beta}}.
\]
Then using \eqref{4.6} and (H5-1) yields
\begin{align*}
&c_0\,\|m^{1/2}(J_1(\cdot,Du_\nu) -J_1^*(\cdot,-w/m))\|^2_{L^2(\Omega)}\leq \calA(u_\nu,\alpha_\nu)+\calB(m,w)\leq C\nu^{\frac{1}{1+\beta}}.    
\end{align*}
Using instead \eqref{4.6'} and (H5-2) yields
\begin{align*}
&c_0\,\|J_2(\cdot,m)-J_2^*(\cdot,\alpha_\nu)\|^2_{L^2(\Omega)}\leq \calA(u_\nu,\alpha_\nu)+\calB(m,w)\leq C\nu^{\frac{1}{1+\beta}}.    
\end{align*}
Finally \eqref{4.8} yields the conclusion.
\end{proof}

\begin{remark} \label{rk.4.2}
    Suppose that $J_2(\cdot,m)\equiv m^{q/2}$. 
 We obtain from the second conclusion of theorem that when $q\geq 2$,
    \begin{align*}
\iint_\Omega|m_\nu-m|^2dxdt&\leq C\left(\iint_\Omega |m_\nu-m|^qdxdt\right)^{2/q}\\
&\leq C\left(\iint_\Omega |m_\nu^{q/2}-m^{q/2}|^2dxdt\right)^{2/q}\leq C\nu^{\frac{2}{q(1+\beta) }}    .        
    \end{align*}
\end{remark}

\begin{lemma}\lb{L.4.4}
Suppose $\nu\eps^{-\beta}\leq 1$. Then
\[
\calA(u^{n}_{\eps},\alpha_\eps^{n})\leq \calA(u^n,\alpha^n)+C\nu\eps^{-\beta}+C\eps.
\]
\end{lemma}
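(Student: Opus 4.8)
The plan is to estimate the two pieces of $\calA(u_\eps^n,\alpha_\eps^n) = \iint_\Omega F^*(x,\alpha_\eps^n)\,dxdt - \int_{\bbT^d} u_\eps^n(0,x)\bar m(x)\,dx$ separately, using the explicit formula \eqref{4.10} for $u_\eps^n$ and the definition of $\alpha_\eps^n$ from \eqref{4.15'}, namely $\alpha_\eps^n = \frac{\varphi_\eps*\alpha^n}{1+C_1\eps} - \frac{\nu}{1+C_1\eps}\Delta\varphi_\eps * u^n$. First I would handle the linear term: since $u_\eps^n(0,\cdot) = \frac{\varphi_\eps*u^n(0,\cdot)}{1+C_1\eps} - (1+C_1)\eps\|\bar u\|_{\calC^1} - C_2\eps T$, and $\bar m \in \calC^1$ is bounded, the difference $\int_{\bbT^d}(u^n(0,x) - u_\eps^n(0,x))\bar m(x)\,dx$ is controlled by $C\eps\bigl(1 + \|u^n(0,\cdot)\|_{L^1}\bigr)$ using the mollification error $\|\varphi_\eps*u^n(0,\cdot) - u^n(0,\cdot)\|_{L^1} \le$ (something)$\cdot\eps$ — but $u^n(0,\cdot)$ is only $L^1$, not Lipschitz, so instead I would move the mollifier onto $\bar m$: $\int (\varphi_\eps*u^n(0,\cdot))\bar m\,dx = \int u^n(0,\cdot)(\varphi_\eps*\bar m)\,dx$ and $\|\varphi_\eps*\bar m - \bar m\|_\infty \le \|D\bar m\|_\infty\eps$, so this contributes $\le C\eps\|u^n(0,\cdot)\|_{L^1} \le C\eps$ by \eqref{4.3}/\eqref{3.4}. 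The factor $(1+C_1\eps)^{-1}$ and the additive constants contribute another $O(\eps)$.

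Next, the main term $\iint_\Omega F^*(x,\alpha_\eps^n)\,dxdt$. I would split $\alpha_\eps^n$ as $\alpha_\eps^n = \alpha_\eps^{n,1} + \alpha_\eps^{n,2}$ with $\alpha_\eps^{n,1} := \frac{\varphi_\eps*\alpha^n}{1+C_1\eps} \ge 0$ and $\alpha_\eps^{n,2} := -\frac{\nu}{1+C_1\eps}\Delta\varphi_\eps*u^n$. Using monotonicity of $F^*$ in its second argument \eqref{F*2} together with the sublinear-correction bound \eqref{F*3} to absorb the (possibly negative, possibly large) perturbation $\alpha_\eps^{n,2}$: roughly, $F^*(x,\alpha_\eps^{n,1}+\alpha_\eps^{n,2}) \le F^*(x, \alpha_\eps^{n,1} + (\alpha_\eps^{n,2})^+) \le F^*(x,\alpha_\eps^{n,1}) + C|\alpha_\eps^{n,2}|\bigl(1 + (\alpha_\eps^{n,1}+|\alpha_\eps^{n,2}|)^{q'/q}\bigr)$. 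Then for the unperturbed part, Jensen's inequality (convexity of $F^*(x,\cdot)$) combined with the Lipschitz-in-$x$ estimate \eqref{F*1} gives $\iint F^*(x,\alpha_\eps^{n,1})\,dxdt \le \frac{1}{1+C_1\eps}\iint \varphi_\eps * F^*(\cdot,\alpha^n)\,dxdt + C\eps\iint(1+|\alpha^n|^{q'})\,dxdt \le \iint F^*(x,\alpha^n)\,dxdt + C\eps$, where I use \eqref{4.3} to bound $\|\alpha^n\|_{L^{q'}}$ and shift the mollifier in the $x$-Lipschitz error term. (Strictly, rescaling by $(1+C_1\eps)^{-1} < 1$ only helps since $F^*$ is increasing and $F^*(x,0)=0$; one must check $F^*(x, \lambda\alpha) \le F^*(x,\alpha)$ for $\lambda \in (0,1)$, which holds by convexity and $F^*(x,0)=0$.)

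The crux — and the step I expect to be the main obstacle — is controlling the viscous perturbation term $\iint_\Omega |\alpha_\eps^{n,2}|\bigl(1 + (\cdots)^{q'/q}\bigr)\,dxdt$ by $C\nu\eps^{-\beta}$. Here $|\alpha_\eps^{n,2}| \le \nu\|\Delta\varphi_\eps\|_{L^1}\cdots$ is where the mollifier derivatives produce the negative power of $\eps$: $\|\Delta\varphi_\eps\|_{L^1} \sim \eps^{-2}$, but since $u^n$ is only in a Sobolev/Lebesgue space via $Du^n \in L^r$, I would instead write $\Delta\varphi_\eps*u^n = D\varphi_\eps * Du^n$ (moving one derivative onto $u^n$) so $\|\Delta\varphi_\eps*u^n(t,\cdot)\|_{L^r_x} \le \|D\varphi_\eps\|_{L^1}\|Du^n(t,\cdot)\|_{L^r_x} \le C\eps^{-1}\|Du^n(t,\cdot)\|_{L^r_x}$. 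Then one must estimate $\iint \nu\eps^{-1}|Du^n|\,(1+|Du^n|^{?})^{\text{stuff}}\,dxdt$ — the exponent bookkeeping is exactly what forces the definition $\beta = \max\{q'r'/(q'+r'),1\}(d+1)-d$: one interpolates the product of $|\alpha_\eps^{n,2}|$ (bounded in terms of $Du^n$ in $L^r$, hence via parabolic/Sobolev embedding in a mixed space $L^{r(d+1)/d}_{t,x}$ type norm) against the correction $(\alpha^{n})^{q'/q} \in L^{q}$ (since $\alpha^n \in L^{q'}$, so $|\alpha^n|^{q'/q} \in L^{q}$), via Hölder with conjugate exponents, picking up the factor $\eps^{-\beta}$ from distributing the single $\eps^{-1}$ and the Sobolev-embedding exponent gap across dimension $d$. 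Since $\nu\eps^{-\beta} \le 1$ by hypothesis, all such terms stay bounded and the total perturbation is $\le C\nu\eps^{-\beta}$. Verifying that this interpolation closes — i.e. that the exponents $r, q', d$ combine to give precisely the stated $\beta$ and that the relevant embedding $L^\infty_t L^?_x \cap L^r_t W^{1,?}_x \hookrightarrow L^{\text{mixed}}$ is available for $Du^n$ (or equivalently using the energy bound \eqref{4.3} directly in a Gagliardo–Nirenberg/parabolic-interpolation argument) — is the technical heart of the lemma.
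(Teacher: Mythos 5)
Your overall architecture coincides with the paper's: the same splitting $\alpha_\eps^n=\frac{\varphi_\eps*\alpha^n}{1+C_1\eps}+X_\eps$ with $X_\eps=-\nu\Delta\varphi_\eps*u^n$ (up to the harmless $(1+C_1\eps)^{-1}$), the same use of \eqref{F*2}--\eqref{F*3} to absorb the perturbation, Jensen plus \eqref{F*1} for the mollified part, and the same trick of moving the mollifier onto $\bar m$ for the linear term using $\|u^n(0,\cdot)\|_{L^1}\le C$. All of that is fine.

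The genuine gap is the one step you yourself flag as "the technical heart": you never actually prove $\|X_\eps\|_{L^{q'}(\Omega)}\le C\nu\eps^{-\beta}$, and the mechanism you propose for it --- parabolic Sobolev embedding of $Du^n$ into a mixed $L^{r(d+1)/d}$-type space followed by interpolation --- is not how $\beta$ arises and would not obviously close (you have no parabolic energy estimate for $u^n$, only $Du^n\in L^r(\Omega)$ from \eqref{4.3}). No embedding is needed. Write $\Delta\varphi_\eps*u^n=D\varphi_\eps*Du^n$ as you do, and apply Young's convolution inequality in the spatial variable: if $\beta_1:=\frac{q'r'}{q'+r'}\ge 1$ (equivalently $r\le q'$, and $1+\frac{1}{q'}=\frac{1}{\beta_1}+\frac{1}{r}$), then
\[
\|\Delta\varphi_\eps*u^n\|_{L^{q'}(\Omega)}\le \|D\varphi_\eps\|_{L^{\beta_1}}\|Du^n\|_{L^r(\Omega)}\le C\eps^{-\beta_1(d+1)+d}=C\eps^{-\beta},
\]
while if $\beta_1<1$ (i.e.\ $r>q'$) one simply uses $L^r(\Omega)\subset L^{q'}(\Omega)$ on the bounded domain together with $\|D\varphi_\eps\|_{L^1}\le C\eps^{-1}$, which gives $\eps^{-1}=\eps^{-((d+1)-d)}$ and explains why $\beta=\max\{\beta_1,1\}(d+1)-d$. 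Once $\|X_\eps\|_{L^{q'}}\le C\nu\eps^{-\beta}\le C$ is in hand, the remaining perturbation terms $\iint (X_\eps)_+^{q'}+(X_\eps)_+(1+(\varphi_\eps*\alpha^n)^{q'/q})$ are handled by H\"older exactly as you indicate, yielding $C\nu^{q'}\eps^{-q'\beta}+C\nu\eps^{-\beta}\le C\nu\eps^{-\beta}$ under the hypothesis $\nu\eps^{-\beta}\le 1$. With that single estimate supplied, your argument is complete and identical in substance to the paper's.
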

\begin{proof}
In view of \eqref{4.3}, Young's convolution inequality yields if $\frac{q'r'}{q'+r'}\geq 1$ (then $r\leq q'$),
\[
\|\Delta\varphi_\eps*u^n\|_{L^{q'}(\Omega)}\leq\|D\varphi_\eps\|_{L^{\beta_1}(\Omega)}\|Du^n\|_{L^r(\Omega)}\leq C\eps^{-{\beta_1}(d+1)+d}\quad\text{with }\beta_1:=\frac{q'r'}{q'+r'}\geq1,
\]
and  if $\frac{q'r'}{q'+r'}< 1$ (then $r>q'$),
\[
\|\Delta\varphi_\eps*u^n\|_{L^{q'}(\Omega)}\leq C\|\Delta\varphi_\eps*u^n\|_{L^{r}(\Omega)}\leq C\|D\varphi_\eps\|_{L^1(\Omega)}\|Du^n\|_{L^r(\Omega)}\leq C\eps^{-1}.
\]
Thus, overall, we get for $X_\eps:=-\nu\Delta\varphi_\eps*u^n$,
\beq\lb{4.7}
\|X_\eps\|_{L^{q'}(\Omega)}\leq C\nu\eps^{-\beta}\quad\text{ with }\beta=\max\left\{\beta_1,1\right\}(d+1)-d.
\eeq

\quad 
In view of \eqref{4.15'}, 
\[
\alpha_\eps^n-{\varphi_\eps*\alpha^{n}}/(1+C_1\eps)\leq  (X_\eps)_+.
\]
Using \eqref{F*2} and \eqref{F*3} yields
\beq\lb{4.11}
\begin{aligned}
\iint_{\Omega}F^*(x,\alpha^{n}_{\eps})dxdt&\leq \iint_{\Omega}F^*(x,\frac{\varphi_\eps*\alpha^{n}}{1+C_1\eps})+C(X_\eps)_+(1+(\alpha_\eps^n)^{q'/q})dxdt\\
&\leq \iint_{\Omega}F^*(x,\varphi_\eps*\alpha^{n})+C(X_\eps)_+(1+(\varphi_\eps*\alpha^n+(X_\eps)_+)^{\frac{q'}q})dxdt\\
&\leq \iint_{\Omega}F^*(x,\varphi_\eps*\alpha^{n})+C(X_\eps)_+^{q'}+C(X_\eps)_+(1+(\varphi_\eps*\alpha^n)^{\frac{q'}q})dxdt.
\end{aligned}
\eeq
It follows from \eqref{4.7}, H\"{o}lder's and Young's convolution inequality inequality that
\begin{align*}
   & \iint_{\Omega}(X_\eps)_+^{q'}+(X_\eps)_+(1+(\varphi_\eps*\alpha^n)^{\frac{q'}q})dxdt\\
    &\qquad\qquad\leq \|X_\eps\|_{L^{q'}(\Omega)}^{q'}+\|X_\eps\|_{L^{q'}(\Omega)}\left(\iint_\Omega 1+(\varphi_\eps*\alpha^n)^{q'}dxdt\right)^{1/q}\\
    &\qquad\qquad\leq C\nu^{q'}\eps^{-q'\beta}+C\nu\eps^{-\beta}\left(1+\|\alpha^n\|_{L^{q'}(\Omega)}^{q'/q}\right)\leq  C\nu\eps^{-\beta}
\end{align*}
where the last inequality is due to \eqref{4.3} and $\nu\eps^{-\beta}\leq 1$. 
Since $F^*(x,\cdot)$ is convex, Jensen's inequality and \eqref{F*1} show
\begin{align*}
\iint_{\Omega}F^*(x,\varphi_\eps*\alpha^n)dxdt&\leq \iint_{\Omega}\varphi_\eps*F^*(x,\alpha^{n})dxdt+C\eps(1+(\alpha^n)^{q'})dxdt\\
&\leq \iint_{\Omega}F^*(x,\alpha^{n})dxdt+C\eps,    
\end{align*}
where we applied \eqref{4.3} again.
Using these in \eqref{4.11} yields
\beq\lb{4.16}
\iint_{\Omega}F^*(x,\alpha^{n}_{\eps})dxdt\leq \iint_{\Omega}F^*(x,\alpha^{n})dxdt+C\nu\eps^{-\beta}+C\eps.
\eeq

\quad  Note that, since $\bar m\in\calC^1$, from \eqref{4.10} and \eqref{3.3} we get
\beq\lb{4.17}
\begin{aligned}
\int_{\bbT^{d}} u^n(0,x)\bar{m}(x)-u^n_\eps(0,x)\bar{m}(x)dx&\leq C\eps+\int_{\bbT^{d}} u^n(0,x)\bar{m}(x)-u^n(0,x)\frac{\varphi_\eps*\bar{m}(x)}{1+C_1\eps}dx\\
&\leq C\eps\left(1+\int_{\bbT^{d}}| u^n(0,x)|dx\right)\leq C\eps.
\end{aligned}
\eeq
Finally, we recall the definition of $\calA$ and apply \eqref{4.16} and \eqref{4.17} to get
\begin{align*}
\calA(u^{n}_{\eps},\alpha_\eps^{n})  &=\iint_{\Omega}F^*(x,\alpha_\eps^{n})dxdt-\int_{\bbT^d}u^{n}_{\eps}(0,\cdot)\bar{m}dx\\
&\leq\iint_{\Omega}F^*(x,\alpha^{n})dxdt-\int_{\bbT^d}u^{n}(0,\cdot)\bar{m}dx+C\nu\eps^{-\beta}+C\eps\\
&=\calA(u^n,\alpha^n)+C\nu\eps^{-\beta}+C\eps.
\end{align*}
\end{proof}



\section{Convergence of classical solutions -- Local Coupling}
\lb{S42}

\quad In this section, we prove the convergence of $u_\nu$ under some extra conditions (see (H6)--(H8) below). We will assume that $(u_\nu,m_\nu)$ with $\nu>0$ are classical solutions. Indeed, \cite{2012long} showed that the solutions are smooth when Hamiltonians are purely quadratic. We also refer readers to \cite{GPS1,GPS2} for results about classical solutions.


\quad We will present two results for quadratic Hamiltonians (so $q=2$). The first one is for $r>2$ and we use the conditions (H6)(H7) below (by (H6) we mean (H6-1)--(H6-3)), while the second one is for $r=2$ and $d\leq 3$, and we further need (H8).
\begin{itemize}
    \item[(H6-1)] $\bar m\in W^{2,\infty}(\bbT^d)$.

    \smallskip
    
    \item[(H6-2)] $H(x,p)$ is twice continuously differentiable in both the variables and for some $C>0$,
    \[
    |D_x^2H(x,p)|\leq C|p|^r+C\quad\text{ for all }(x,p)\in\bbT^d\times\bbR^d.
    \]

    \smallskip

    \item[(H6-3)] There exist $C\geq 1$ such that for all $x,y\in\bbT^d$ and $m,m'\geq0$,
    \[
    |f(x,m)-f(y,m)|\leq C m|x-y|,
    \]
    and
    \beq\lb{4.1.1}
    (f(x,m)-f(x,m'))(m-m')\geq \,|m-m'|^2/C.    \eeq

    \item[(H7)](Conditions on the coupling) (H1) holds with $q=2$, and for some $C>0$,
    \[
    f_m(x,m)\leq C\quad\text{ for }(x,m)\in\bbT^d\times [0,\infty).
    \]
    
    \smallskip
    
    \item[(H8)](Strict convex Hamiltonian) For for some $C\geq 1$.
    \[
    C^{-1}I_d\leq H_{pp}(x,p)\leq CI_d\quad\text{ for all $(x,p)\in\bbT^d\times\bbR^d$},
    \] 
    where $I_d$ denotes the identity matrix.
\end{itemize}
\begin{remark}
Condition {\rm (H6)} is assumed in \cite{GM}. We are going to apply their regularity results.
{\rm(H8)} implies $r=2$ in {\rm(H2)}. 
\end{remark}

\quad We start with the following lemma which says that \eqref{4.1.1} implies (H5-2) with $J_2(x,m)=m$, and (H8) implies (H5-1) with $J_1(x,p)=p$.
\begin{lemma}\lb{L.4.6}
Assume {\rm(H1)(H6)}. Then {\rm(H5-2)} holds with $J_2(x,m)=m$ and $J_2^*(x,\alpha)$ defined such that $f(x,J_2^*(x,\alpha))=\alpha$.

\quad If further assuming $H_{pp}(x,p)\geq c I_d$ for some $c>0$ uniformly in $\bbT^d\times\bbR^d$, then {\rm (H5-1)} holds with $J_1(x,p)=p$ and $J_1^*(x,\xi)$ defined such that $H_p(x,J_1^*(x,\xi))=\xi$.

\quad Here $J_2^*$ is well-defined as $f(x,\alpha)$ is strictly increasing in $\alpha$. Similarly, $J_1^*$ is well-defined as $H(x,\cdot)$ is $\calC^2 $ and is uniformly strictly convex in $p$, and the map $H_p(x,\cdot):\bbR^d\to\bbR^d$ is one-to-one. 
\end{lemma}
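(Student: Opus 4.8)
The statement to prove is that the coercivity inequalities \eqref{H51} and \eqref{H52} hold with the linear choices $J_1(x,p)=p$ and $J_2(x,m)=m$, where the dual objects $J_1^*,J_2^*$ are defined as the inverses of $H_p(x,\cdot)$ and $f(x,\cdot)$, respectively. The plan is to treat the two assertions in parallel, since each reduces to a second-order Taylor expansion of a uniformly convex function against its Fenchel conjugate.

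First I would handle \eqref{H52}. Fix $x\in\bbT^d$. The key identity is that, for any $\alpha\in\bbR$, the Fenchel conjugate satisfies $F^*(x,\alpha)=\alpha\,m_\alpha-F(x,m_\alpha)$ where $m_\alpha$ is the (unique) maximizer, which by first-order optimality is exactly $m_\alpha=J_2^*(x,\alpha)$, i.e.\ $f(x,m_\alpha)=F_m(x,m_\alpha)=\alpha$ (using $\alpha\ge 0$; for $\alpha\le 0$ one has $F^*(x,\alpha)=0$ and $m_\alpha=0$, which must be checked as a boundary case but is immediate from monotonicity of $f$ and $f(x,0)=0$). Then for arbitrary $m$,
\[
F(x,m)+F^*(x,\alpha)-m\alpha = F(x,m)-F(x,m_\alpha)-\alpha(m-m_\alpha) = F(x,m)-F(x,m_\alpha)-f(x,m_\alpha)(m-m_\alpha).
\]
Writing $F(x,m)-F(x,m_\alpha)=\int_{m_\alpha}^m f(x,s)\,ds$ and subtracting $f(x,m_\alpha)(m-m_\alpha)=\int_{m_\alpha}^m f(x,m_\alpha)\,ds$, the right-hand side becomes $\int_{m_\alpha}^m \bigl(f(x,s)-f(x,m_\alpha)\bigr)\,ds$. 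By \eqref{4.1.1} in (H6-3), $\bigl(f(x,s)-f(x,m_\alpha)\bigr)(s-m_\alpha)\ge |s-m_\alpha|^2/C$, so the integrand has the sign of $s-m_\alpha$ and is bounded below in absolute value by $|s-m_\alpha|/C$; integrating gives $\int_{m_\alpha}^m (f(x,s)-f(x,m_\alpha))\,ds \ge \tfrac{1}{2C}|m-m_\alpha|^2$. Hence \eqref{H52} holds with $c_0=\tfrac{1}{2C}$ and $J_2(x,m)=m$, $J_2^*(x,\alpha)=m_\alpha$. One subtlety to flag: when $m<0$, $F(x,m)=+\infty$ and the inequality is trivial, so the argument is only needed for $m\ge 0$, where $s$ ranges over nonnegative values and (H6-3) applies.

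The proof of \eqref{H51} is formally the same with $(F,f,m,\alpha)$ replaced by $(H,H_p,p,\xi)$ and (H6-3) replaced by the uniform convexity $H_{pp}(x,p)\ge cI_d$. Here $H^*(x,\xi)=\xi\cdot p_\xi - H(x,p_\xi)$ with $p_\xi$ the maximizer, characterized by $H_p(x,p_\xi)=\xi$, i.e.\ $p_\xi=J_1^*(x,\xi)$; this is well-defined and the map is onto because $H(x,\cdot)$ is $\mathcal C^2$ and uniformly strictly convex, so $H_p(x,\cdot)$ is a diffeomorphism of $\bbR^d$ (I would cite strict convexity plus coercivity from (H2) for surjectivity, or simply invoke standard Legendre duality). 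Then
\[
H(x,p)+H^*(x,\xi)-p\cdot\xi = H(x,p)-H(x,p_\xi)-H_p(x,p_\xi)\cdot(p-p_\xi) = \int_0^1 (1-s)\,(p-p_\xi)^\top H_{pp}(x, p_\xi+s(p-p_\xi))\,(p-p_\xi)\,ds,
\]
and the lower bound $H_{pp}\ge cI_d$ yields the integral is $\ge \tfrac{c}{2}|p-p_\xi|^2$, giving \eqref{H51} with $c_0=\tfrac{c}{2}$. I expect the main (minor) obstacle to be the bookkeeping around well-definedness of the Legendre maps — in particular confirming surjectivity of $H_p(x,\cdot)$ and handling the non-smooth/degenerate boundary regime $\alpha\le 0$ (resp.\ the growth of $H$) — rather than the convexity estimate itself, which is a one-line Taylor argument. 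Since the paper says the proof for \eqref{H51} is "almost the same" as the appendix computation for the power case, I would present \eqref{H52} in full and remark that \eqref{H51} follows by the identical second-order Taylor expansion using $H_{pp}\ge cI_d$ in place of \eqref{4.1.1}.
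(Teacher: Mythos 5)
Your proposal is correct and follows essentially the same route as the paper: identify $J_2^*(x,\alpha)=m_\alpha$ with $f(x,m_\alpha)=\alpha$, reduce $F(x,m)+F^*(x,\alpha)-m\alpha$ to the Bregman divergence $F(x,m)-F(x,m_\alpha)-f(x,m_\alpha)(m-m_\alpha)$, and bound it below by $\tfrac{c}{2}|m-m_\alpha|^2$ using the strong monotonicity \eqref{4.1.1}, with the identical second-order argument (via $H_{pp}\geq cI_d$) for \eqref{H51}. The only cosmetic difference is that you derive the strong-convexity lower bound by integrating \eqref{4.1.1} explicitly, while the paper simply invokes it, and you spell out the boundary cases ($\alpha\leq 0$, $m<0$) that the paper dispatches via Remark \ref{R.4.1}.
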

\begin{proof}
Let us fix $x\in\bbT^d$.
In view of Remark \ref{R.4.1}.2, it suffices to consider $m,\alpha>0$. Take $m_\alpha$ such that $f(x,m_\alpha)=\alpha$. The definition of $F^*$ yields
\beq\lb{4.1.2}
\begin{aligned}
F(x,m)+F^*(x,\alpha)-m\alpha\geq F(x,m)-F(x,m_\alpha)+(m_\alpha-m)\alpha.
\end{aligned}
\eeq
Since $F$ is strictly convex by \eqref{4.1.1}, and $F_m(x,m)=f(x,m)$, we have
\[
F(x,m)-F(x,m_\alpha)\geq f(x,m_\alpha)(m-m_\alpha)+\frac{c}{2}|m-m_\alpha|^2.
\]
Then, using $f(x,m_\alpha)=\alpha$, we get from \eqref{4.1.2} that
\[
F(x,m)+F^*(x,\alpha)-m\alpha\geq c(m-J_2^*(x,\alpha))^2/2
\]
where $J_2^*(x,\alpha):=m_\alpha$. We proved the first claim.

\quad 
The proof for the second claim is identical.
\end{proof}


\quad In the next lemma, we collect and prove some $\nu$-uniform estimates.

\begin{lemma}\lb{L.4.5}
    Assume {\rm(H2)--(H4)(H6)(H7)}, and suppose that $(m_\nu,\alpha_\nu)$ with $\nu\in [0,1]$ are weak solutions to \eqref{1.1}. Then there exists $C>0$ such that for all $\nu\in [0,1]$, 
\beq\lb{4.1.4}
\|D m_\nu\|_{L^2(\Omega)}\leq C\quad\text{and}
\eeq
\[
\|m_\nu\|_{L^\eta(\Omega)}+\|u_\nu\|_{L^{\delta}(\Omega)}\leq C,
\]
were $\eta:=\frac{2(d+1)}{d}$ and
\beq\lb{delta}
\delta:=\frac{r \eta(d+1)}{d-r(\eta-1)}\,\text{ if }\eta<1+\frac{d}{r}\quad\text{and}\quad\delta:=\infty\,\text{ if }\eta> 1+\frac{d}{r}.
\eeq
When $\eta=1+\frac{d}{r}$, $\delta>0$ can be an arbitrarily large constant, and in this case, the constant $C$ depends also on the choice of $\delta$.

\quad If further assuming $\inf_{\bbT^d\times\bbR^d}H_{pp}(x,p)\geq cI_d$ for some $c>0$, then we have
\beq\lb{4.1.5}
\|m_\nu^{1/2}D^2 u_\nu\|_{L^2(\Omega)}\leq C\quad\text{ uniformly in }\nu\in[0,1]\text{ for some $C>0$} .
\eeq
\end{lemma}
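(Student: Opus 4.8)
The plan is to obtain the four estimates in order of increasing difficulty, each building on the previous one and on the regularity theory of \cite{GM}. First I would derive \eqref{4.1.4}: test the Fokker–Planck equation for $m_\nu$ against $f(\cdot, m_\nu)$ and the Hamilton–Jacobi equation for $u_\nu$ against $\partial_t m_\nu$ (the standard duality pairing underlying the energy identity for MFGs), integrate by parts in time and space, and use the terminal/initial data together with (H3), (H6-1). The cross terms combine into $\iint_\Omega m_\nu D_{pp}H(x,Du_\nu) D^2 u_\nu \cdot D^2 u_\nu$ plus lower-order pieces; discarding the nonnegative Hessian term and using the monotonicity \eqref{4.1.1} of $f$ in $m$ (which under $q=2$ gives $D f(\cdot, m_\nu) \cdot D m_\nu \gtrsim |D m_\nu|^2$), one gets $\|Dm_\nu\|_{L^2(\Omega)}^2 \lesssim 1$ uniformly in $\nu \in [0,1]$, where the bound on the right uses the uniform $L^2$-bound on $\alpha_\nu = f(\cdot, m_\nu)$ from \eqref{3.4} (note $q'=2$ when $q=2$) and the $\calC^1$-regularity of $\bar m, \bar u$. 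The boundary terms at $\nu>0$ carry factors of $\nu$ on $\Delta$-type quantities and are absorbed since $\nu \le 1$.

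Next, the bound $\|m_\nu\|_{L^\eta(\Omega)} \lesssim 1$ with $\eta = \frac{2(d+1)}{d}$ follows by combining \eqref{4.1.4} with the uniform $L^1$-in-time, $L^1$-in-space mass bound $\iint m_\nu = T$ (since $m_\nu$ is a probability density, conserved by the continuity equation) via the parabolic Gagliardo–Nirenberg–Sobolev embedding $L^\infty_t L^1_x \cap L^2_t W^{1,2}_x \hookrightarrow L^{\frac{2(d+1)}{d}}(\Omega)$ — this is exactly the exponent appearing in Gomes–Mitake or in \cite{GM}. The bound $\|u_\nu\|_{L^\delta(\Omega)} \lesssim 1$ with $\delta$ as in \eqref{delta} is then obtained exactly as the bound defining the space $L^\gamma$ in \eqref{gamma}: one runs the same argument that produces $\gamma$ from $q'$, but now with the improved integrability exponent $\eta$ in place of $q'$ for the right-hand side $f(\cdot, m_\nu)$. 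Concretely, $u_\nu$ solves a HJ inequality with right-hand side bounded in $L^\eta$; the a priori estimates for such equations (e.g. \cite[Section 3]{CGPT} or the Lipschitz/Hölder estimates quoted there) upgrade $Du_\nu \in L^r$ plus $u_\nu(T,\cdot) = \bar u$ to $u_\nu \in L^\delta$ with $\delta$ given by the stated interpolation formula, uniformly in $\nu$. The borderline case $\eta = 1 + d/r$ is handled by choosing any finite $\delta$ with the constant depending on that choice, as flagged in the statement.

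Finally, for \eqref{4.1.5}, under the uniform strict convexity $H_{pp} \ge cI_d$, I would return to the energy identity from the first step but now \emph{keep} the Hessian term rather than discarding it: the same computation gives
\[
c \iint_\Omega m_\nu |D^2 u_\nu|^2\, dxdt \le \iint_\Omega m_\nu D_{pp}H(x,Du_\nu) D^2u_\nu : D^2 u_\nu\, dxdt \le C,
\]
where $C$ collects the $f$–monotonicity term (already controlled), the data terms, and $\nu$–weighted terms absorbed for $\nu \le 1$; here one also uses (H6-2) to control the $D_x^2 H$ contributions arising when the Fokker–Planck equation is differentiated, against the now-available bounds \eqref{4.1.4} and $\|m_\nu\|_{L^\eta}$. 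This yields $\|m_\nu^{1/2} D^2 u_\nu\|_{L^2(\Omega)} \le C$ uniformly.

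The main obstacle is the bookkeeping in the energy identity: making the integration by parts rigorous at the level of weak solutions (the pairing $\langle \partial_t m_\nu, u_\nu\rangle$ and $\langle \partial_t u_\nu, m_\nu\rangle$ require the trace lemma, Lemma 5.1 of \cite{CGPT}, and the integrability conditions (i) in the definition of weak solution), and tracking that \emph{every} error term carries either a power of $\nu \le 1$ or is controlled by the already-established uniform bounds \eqref{3.4}, \eqref{4.1.4}, $\|m_\nu\|_{L^\eta}$ — rather than by quantities that themselves blow up as $\nu \to 0$. In particular, one must be careful that differentiating the Fokker–Planck equation (needed to produce $D m_\nu$ and, for \eqref{4.1.5}, to pair correctly) does not generate terms requiring control of $D^2 m_\nu$ or of $Du_\nu$ in norms stronger than $L^r$; this is where the specific structure $q=2$ and the growth hypotheses (H6)(H7), together with the regularity results of \cite{GM} invoked in the remark, are essential. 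For classical solutions $\nu>0$ all manipulations are licit, and the uniformity is the only real content; the $\nu=0$ case follows by the stability in Theorem \ref{T.2.2}.
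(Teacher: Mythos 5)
Your middle two steps match the paper exactly: the $L^\eta$ bound for $m_\nu$ is indeed the parabolic embedding $L^\infty_t L^1_x\cap L^2_t W^{1,2}_x\hookrightarrow L^{2(d+1)/d}(\Omega)$ applied to \eqref{4.1.4} together with $\int_{\bbT^d}m_\nu(t,\cdot)=1$, and the $L^\delta$ bound for $u_\nu$ is obtained by feeding $f(\cdot,m_\nu)\in L^\eta$ (from (H1) with $q=2$) and the uniform lower bound of Proposition \ref{P.3.3} into the a priori estimates of \cite[Theorem 3.3]{CGPT}. The paper itself does not recompute \eqref{4.1.4} or \eqref{4.1.5}; it invokes \cite[Proposition 4.3]{GM} after checking via Lemma \ref{L.4.6} that (H5-1)/(H5-2) hold with $J_1,J_2$ equal to the identity, and notes that the argument there is $\nu$-uniform.

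The genuine gap is in the energy identity you propose for \eqref{4.1.4}. Testing the Fokker--Planck equation against $f(\cdot,m_\nu)$ produces the term $\nu\iint_\Omega Dm_\nu\cdot D[f(x,m_\nu)]\,dxdt$, so the coercive quantity $f_m|Dm_\nu|^2$ you want to exploit appears only with a prefactor $\nu$; the remaining cross term is $\iint m_\nu D_pH\cdot D[f(x,m_\nu)]$, which is linear in $Dm_\nu$ and cannot be absorbed. Hence your pairing yields at best $\nu\|Dm_\nu\|_{L^2(\Omega)}^2\le C$ (this is precisely the estimate used in Step 1 of Theorem \ref{T.5.2} for the nonlocal case), which degenerates as $\nu\to0$ and does not give \eqref{4.1.4}. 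Likewise, the quadratic Hessian term $\iint m_\nu\, D_{pp}H\,D^2u_\nu\!:\!D^2u_\nu$ cannot arise from the pairing you describe: it comes from $\Delta\bigl[H(x,Du_\nu)\bigr]$, i.e., from differentiating the Hamilton--Jacobi equation \emph{twice} in space. The correct computation is the Lasry--Lions second-order estimate: apply $\Delta$ (or, for weak solutions, spatial difference quotients at the level of the variational problems \eqref{opt1}--\eqref{opt2}, which is how \cite{GM} proceeds) to the HJ equation and pair with $m_\nu$, and pair the FP equation with $\Delta u_\nu$. Then the coupling contributes $\iint m_\nu\Delta[f(x,m_\nu)]=-\iint Dm_\nu\cdot D[f(x,m_\nu)]$ with \emph{no} factor of $\nu$, and together with the Hessian term this yields both \eqref{4.1.4} (discarding the Hessian term, using \eqref{4.1.1}) and \eqref{4.1.5} (keeping it, using $H_{pp}\ge cI_d$), with the errors controlled by (H6-1)--(H6-3). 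Your final paragraph correctly anticipates the difficulty of justifying these manipulations for weak solutions, but the resolution is the translation/duality argument of \cite{GM}, not a passage to the limit from classical solutions.
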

\begin{proof}
By Lemma \ref{L.4.6}, $J_1(x,\cdot)$ is an identity map. Then \eqref{4.1.4} follows from \cite[Proposition 4.3]{GM} and (H6)(H7).
In \cite{GM}, the authors only considered the case of $\nu=0$ and assumed (H5-1) with $J_1,J_1^*$ independent of $x$. However, this assumption is not needed to obtain only \eqref{4.1.4}.  Also it is not hard to see that their argument generalizes to all $\nu\in[0,1]$ and the constant is independent of $\nu$. 


\quad Note that $m_\nu\geq 0$ and $\int_{\bbT^d}m_\nu(t,\cdot)=1$. Thus it follows from \eqref{4.1.4} and the Sobolev type embedding theorem (\cite[Proposition 3.1]{dibenedettobook}) that for some dimensional constant $C>0$,
\[
\iint_{\Omega} m_{\nu}^\eta(t,x) dxdt\leq C\left(\iint_\Omega|Dm_{\nu}(t,x)|^2dxdt\right)\left(\text{ess}\sup_{t\in(0,T)}\int_{\bbT^d}m_{\nu}(t,x)dx\right)^{2/d},
\]
which is uniformly finite in $\nu$ by \eqref{4.1.4}.

\quad Next, due to (H1) and $q=2$, $f(x,m_\nu)\in L^\eta(\Omega)$ by \eqref{4.1.4}. Moreover, Proposition \ref{P.3.3} yields that $u_\nu$ is bounded from below. So we can apply Theorem 3.3 in \cite{CGPT} to get for some $C$ independent of $\nu\in [0,1]$,
\[
\|u_\nu\|_{L^{\delta}(\Omega)}\leq C\quad\text{ with }\delta \text{ given in }\eqref{delta}.
\]
It is not hard to see that when $\eta=1+\frac{d}{r}$, $\delta>0$ can be arbitrary and the constant $C$ depends also on $\delta$.

\quad The last claim follows from \cite{GM} and the second claim of Lemma \ref{L.4.6}.
\end{proof}

\quad Now we are ready to prove our main theorem of the section.

\begin{theorem}\lb{T.5.3}
Assume {\rm(H2)--(H4)(H6)(H7)} and 
\beq\lb{6.8}
r\geq \max\left\{2+\frac{d}{d+1},\,\frac{d^2}{2d+3}\right\}.
\eeq
Suppose that $(u_\nu,m_\nu)$ with $\nu\in (0,1]$ are classical solutions to \eqref{1.1}, and $(u,m)$ is a weak solution to \eqref{1.1} with $\nu=0$. Then there exists $C>0$ such that for all $\nu\in (0,1]$,   
\[
\iint_{\Omega}|u_\nu-u|^2m\,dxdt \leq C\nu^{\frac{1}{2(1+\beta)}},
\]
where $\beta$ is given in \eqref{beta}.
\end{theorem}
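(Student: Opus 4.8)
The idea is to transport the already-established $L^2$-rate for the densities onto $u_\nu$ through an $m$-weighted energy estimate. Since $q=2$, Lemma~\ref{L.4.6} shows that {\rm(H6)} implies {\rm(H5-2)} with $J_2(x,m)=m$, so Theorem~\ref{T.4.3} gives $\iint_\Omega|m_\nu-m|^2\,dxdt\le C\nu^{1/(1+\beta)}$; as $r\ge 2+\frac{d}{d+1}>2$, here $\beta=1$. By {\rm(H7)}, $\rho_\nu:=f(x,m_\nu)-f(x,m)$ obeys $\|\rho_\nu\|_{L^2(\Omega)}\le C\nu^{1/(2(1+\beta))}$, and likewise $\|m_\nu-m\|_{L^1(\Omega)}\le C\nu^{1/(2(1+\beta))}$. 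I will freely use the $\nu$-uniform bounds of Lemma~\ref{L.4.5}: $\|Dm_\nu\|_{L^2(\Omega)}$, $\|m_\nu\|_{L^\eta(\Omega)}$ with $\eta=\frac{2(d+1)}{d}$, and $\|u_\nu\|_{L^\delta(\Omega)}$ with $\delta$ as in \eqref{delta} (with $\delta=\infty$ for small $d$); together with the consequence of item {\rm(iii)} of the definition of weak solution that $\iint_\Omega m_\nu|Du_\nu|^r\,dxdt$ and $\iint_\Omega m|Du|^r\,dxdt$, hence $\iint_\Omega m_\nu|Du_\nu|^2$ and $\iint_\Omega m|Du|^2$, are bounded ($H^*(x,D_pH(x,Du))$ being comparable to $|Du|^r$, and the trace terms bounded via Proposition~\ref{P.3.3}).

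Set $\Phi_\nu(t):=\int_{\bbT^d}|u_\nu-u|^2\,m(t,\cdot)\,dx$, where $m=m_0$ solves $\partial_t m-\nabla\cdot(mD_pH(\cdot,Du))=0$. Differentiating in $t$ and substituting the two Hamilton--Jacobi equations (the one for $u$ read as an equality on $\{m>0\}$, which carries all the mass of the weight, via the complementarity in item {\rm(iii)}) and the continuity equation, convexity of $H$ lets the first-order terms recombine into
\[
\Phi_\nu'(t)=2\!\int_{\bbT^d}(u_\nu-u)\,m\,B_\nu\,dx-2\!\int_{\bbT^d}(u_\nu-u)\,m\,\rho_\nu\,dx-2\nu\!\int_{\bbT^d}(u_\nu-u)\,m\,\Delta u_\nu\,dx,
\]
with $B_\nu:=H(x,Du_\nu)-H(x,Du)-D_pH(x,Du)\cdot(Du_\nu-Du)\ge0$. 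Since $u_\nu(T,\cdot)=u(T,\cdot)=\bar u$ on $\{m(T)>0\}$, $\Phi_\nu(T)=0$, so integrating backward in time — and absorbing the sign-indefinite Bregman term through Cauchy--Schwarz into a Gr\"onwall factor — one bounds $\sup_t\Phi_\nu(t)$, hence $\iint_\Omega|u_\nu-u|^2m\,dxdt=\int_0^T\Phi_\nu$, by the time-integrated size of the right-hand side.

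It remains to estimate these contributions, which is where \eqref{6.8} enters. The coupling term is handled by H\"older: $\iint_\Omega|u_\nu-u|\,m\,|\rho_\nu|\le\|u_\nu-u\|_{L^\delta(\Omega)}\|m\|_{L^\eta(\Omega)}\|\rho_\nu\|_{L^2(\Omega)}$, whose admissibility $\frac1\delta+\frac1\eta+\frac12\le1$, i.e. $\delta\ge2(d+1)$, is seen by a direct computation with \eqref{delta} to be exactly $r\ge\frac{d^2}{2d+3}$ when $\delta<\infty$. The viscosity term, integrated by parts in $x$, yields a piece of definite favorable sign from $+\nu\iint_\Omega m|Du_\nu|^2$ (which absorbs the cross part $-2\nu\iint_\Omega m\,Du_\nu\cdot Du$ up to $\nu\iint_\Omega m|Du|^2\le C\nu$) and a remaining cross term $\le C\nu\|u_\nu-u\|_{L^\delta(\Omega)}\|Du_\nu\|_{L^r(\Omega)}\|Dm\|_{L^2(\Omega)}\le C\nu$, the last H\"older triple being admissible because $r\ge2+\frac{d}{d+1}$ — combined with $\delta\ge2(d+1)$ or $\delta=\infty$ — forces $\frac1\delta+\frac1r\le\frac12$. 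The Bregman term is absorbed using the already-bounded gradient quantities (and, where available, the smallness $\iint_\Omega mB_\nu\,dxdt\lesssim\nu^{1/(1+\beta)}$ coming from Lemma~\ref{L.4.1} and the defect bound \eqref{4.1}). Collecting, $\iint_\Omega|u_\nu-u|^2m\,dxdt\lesssim\|\rho_\nu\|_{L^2(\Omega)}+\nu\lesssim\nu^{1/(2(1+\beta))}$; note that the $L^2$-norm of $\rho_\nu$, not its square, enters, which is exactly what costs a factor $\tfrac12$ in the exponent relative to Theorem~\ref{T.4.3}.

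The genuine obstacle is the second-order/viscosity term $\nu\int(u_\nu-u)m\Delta u_\nu$ together with the sign-indefinite Bregman term: here one cannot invoke $\|m^{1/2}D^2u_\nu\|_{L^2}\lesssim1$, since uniform convexity of $H$ — condition {\rm(H8)} — is used only in Theorem~\ref{T.5.4}, not here; one must therefore integrate by parts and work with the mere $L^r$-bound on $Du_\nu$ and the $L^2$-bound on $Dm_\nu$. Consequently the whole argument closes only because the exponent triple $(\eta,\delta,r)$ stays admissible, and verifying this across all dimensions — in particular noticing that $\delta=\infty$ for small $d$ while the two inequalities in \eqref{6.8} become binding in complementary ranges of $d$ — is the real work. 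A secondary subtlety is justifying the energy identity for the merely weak solution $u$, which is done by restricting to $\{m>0\}$, where $u$ coincides with the value function of the limiting optimal control problem and the Hamilton--Jacobi equation holds in the required sense.
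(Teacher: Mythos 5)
Your plan reproduces the paper's estimating machinery almost exactly — the $m$-weighted $L^2$ energy/duality identity, transferring the density rate from Theorem \ref{T.4.3} via Lemma \ref{L.4.6} and (H7), and the H\"older bookkeeping with the exponents $\eta$, $\delta$, $r$ (your conditions $\delta\ge 2(d+1)$ and $\frac1\delta+\frac1r\le\frac12$ are precisely the paper's $r_3\le\eta$ and $r_2\le\delta$, and your identification of $\delta\ge 2(d+1)$ with $r\ge d^2/(2d+3)$ is correct). But there is a genuine gap in how you set up the identity. You differentiate $\Phi_\nu(t)=\int_{\bbT^d}|u_\nu-u|^2m\,dx$ with $(u,m)$ the \emph{weak} solution at $\nu=0$. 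Here $u$ is only in $L^\gamma$ with $Du\in L^r$, the Hamilton--Jacobi equation for $u$ holds only as a distributional \emph{inequality} supplemented by the global integral identity (iii), and $m$ solves the continuity equation only in the sense of distributions. Using $|u_\nu-u|^2$ as a test function in that continuity equation, and reading the HJ equation as a pointwise equality "on $\{m>0\}$", is not justified at this level of regularity; the complementarity in (iii) is an integral statement, not a license for the chain rule. This is exactly the difficulty the paper's proof is engineered to avoid: it sets $U=u_\nu-u_{\nu'}$ for $0<\nu'<\nu$, where \emph{both} solutions are classical, weights by $m_{\nu'}$, obtains $\sup_t\int_{\bbT^d}U^2m_{\nu'}\,dx\le C\nu^{\frac{1}{2(1+\beta)}}$ uniformly in $\nu'$, and only then passes $\nu'\to0$ using the strong $L^2$ convergence $m_{\nu'}\to m$ (Theorem \ref{T.2.2}), the uniform $L^\delta$ bound on $u_{\nu'}$, and weak lower semicontinuity of the convex continuous functional $v\mapsto\iint_\Omega v^2m\,dxdt$. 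Without this (or an equivalent regularization), your energy identity is not established.

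A second, related gap is the Bregman term $\iint(u_\nu-u)\,m\,B_\nu$. Your proposed Cauchy--Schwarz/Gr\"onwall absorption would require $\iint_\Omega mB_\nu^2\,dxdt<\infty$, i.e.\ essentially $m|Du_\nu|^{2r}\in L^1$, which is not among the available bounds (only $\iint m|Du_\nu|^r$ and $\|Du_\nu\|_{L^r}$ are controlled). The alternative you mention, $\iint_\Omega m|Du_\nu-Du|^2\,dxdt\lesssim\nu^{\frac{1}{1+\beta}}$, requires (H5-1) with $J_1(x,p)=p$, which by Lemma \ref{L.4.6} needs $H_{pp}\ge cI_d$ — that is (H8), assumed only in Theorem \ref{T.5.4}, not here. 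So as written the Bregman contribution is not controlled. (The paper disposes of this term by a sign argument from convexity in its Step 2 rather than by absorption; in any case "Cauchy--Schwarz into a Gr\"onwall factor" does not close with the stated bounds.) The rest of your estimates — the $C\nu$ bound on the viscous terms after integration by parts, and the $\|\rho_\nu\|_{L^2}$ (not its square) entering linearly to halve the exponent — match the paper.
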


\begin{proof}
The proof is split into four steps. 

\quad {\bf Step 1}. For any $\nu'\in (0,\nu)$, define
\[
U(t,x):=u_{\nu}(t,x)-u_{\nu'}(t,x).
\]

Later we will pass $\nu'\to 0$. From the first equation in \eqref{1.1}, it follows that that
\begin{align}
-\partial_t U-\nu\Delta u_\nu+\nu'\Delta u_{\nu'}+H(x,Du_\nu)-H(x,Du_{\nu'})=f(x,m_\nu)-f(x,m_{\nu'}).\lb{4.2.1}   
\end{align}
Since $(u_\nu,m_{\nu})$ and $(u_{\nu'},m_{\nu'})$ are classical solutions, we can multiply \eqref{4.2.1} by $-2Um_{\nu'}$, and use $U^2$ as the test function against the equation of $m_{\nu'}$. Then adding them up and integrating from $t$ to $T$ for some $t\in [0,T)$, we find
\beq\lb{6.7}
\begin{aligned}
&\int_{\bbT^d}U^2m_{\nu'}dx\,\big|_t^T- \int_t^T \int_{\bbT^d} 2(\nu Du_\nu -\nu'Du_{\nu'}) D(Um_{\nu'})+\nu'Dm_{\nu'}D(U^2)\,dxdt\\
&\qquad\quad =\int_t^T \int_{\bbT^d} 2(H(x,Du_\nu)-H(x,Du_{\nu'}))Um_{\nu'}- D_pH(x,Du_{\nu'})D(U^2)m_{\nu'}\\
&\qquad\qquad\quad -2(f(x,m_\nu)-f(x,m_{\nu'}))U m_{\nu'}\,dxdt.
\end{aligned}
\eeq

\quad {\bf Step 2}. Since $H(x,p)$ is convex in $p$, 
\[
H(x,Du_\nu)-H(x,Du_{\nu'})-D_pH(x,Du_{\nu'})(DU)\geq 0.
\]
Also using $U(T,\cdot)=0$, \eqref{6.7} yields
\beq\lb{6.9}
\begin{aligned}
\int_{\bbT^d}U^2(t,\cdot)m_{\nu'}(t,\cdot)dx &\leq \int_t^T \int_{\bbT^d} 2(\nu Du_\nu -\nu'Du_{\nu'}) D(Um_{\nu'})+\nu'Dm_{\nu'}D(U^2)\,dxdt\\
&\quad +2\int_t^T \int_{\bbT^d} (f(x,m_\nu)-f(x,m_{\nu'}))U m_{\nu'}\,dxdt.
\end{aligned}
\eeq

\quad {\bf Step 3}. Now we estimate each term on the right-hand side of \eqref{6.9}. Recall $\eta,\delta$ from Lemma \ref{L.4.5} (with $q'=2$ since $q=2$ by (H7)). By H\"{o}lder's inequality,
\[
\int_t^T \int_{\bbT^d}(Du_{\nu}DU ) m_{\nu'}dxdt\leq \|Du_\nu\|_{L^r(\Omega)}\|DU\|_{L^r(\Omega)}\|m_{\nu'}\|_{L^{r_1}(\Omega)}
\]
where
$r_1:=\frac{2}{r-2}\leq \eta$ by \eqref{6.8}. Thus, Lemma \ref{L.4.5} and Proposition \ref{P.3.3} yield 
\[
\int_t^T \int_{\bbT^d}(Du_{\nu}DU ) m_{\nu'}dxdt\leq C
\]
for some $C$ independent of $\nu,\nu'$. Similarly, we have
$\int_t^T \int_{\bbT^d} |Du_{\nu'} DU|m_{\nu'}dxdt\leq C.$
Due to Lemma \ref{L.4.5} and \eqref{3.4} again, we get
\[
\int_t^T \int_{\bbT^d}(Du_\nu Dm_{\nu'})U\,dxdt\leq \|Du_\nu\|_{L^r(\Omega)}\|Dm_{\nu'}\|_{L^2(\Omega)}\|U\|_{L^{r_2}(\Omega)}\leq C
\]
where
$r_2:=\frac{2r}{r-2}\leq \delta$
by \eqref{6.8}. Similarly, $\int_t^T \int_{\bbT^d}Dm_{\nu'}D(U^2)dxdt$ is uniformly bounded.

\quad As for $\int_t^T \int_{\bbT^d} (f(x,m_\nu)-f(x,m_{\nu'}))U m_{\nu'}\,dxdt$, using (H7) yields
\begin{align*}
\int_t^T\int_{\bbT^d} (f(x,m_\nu)-f(x,m_{\nu'}))U m_{\nu'}\,dxdt&\leq C\iint_\Omega|m_\nu-m_{\nu'}||U|m_{\nu'}dxdt\\
    &\leq C\|m_\nu-m_{\nu'}\|_{L^2(\Omega)}\|U\|_{L^\delta(\Omega)}\|m_{\nu'}\|_{L^{r_3}}
\end{align*}
where $r_3:=\frac{2\delta}{\delta-2}$. Since $r_3\leq \eta$ by \eqref{6.8}, $\|U\|_{L^\delta(\Omega)}\|m_{\nu'}\|_{L^{r_3}}\leq C$ by Lemma \ref{L.4.5}.
Thus, Theorem \ref{T.4.3} and the first part of Lemma \ref{L.4.6} yield
\[
\iint_\Omega (f(x,m_\nu)-f(x,m_{\nu'}))U m_{\nu'}\,dxdt\leq C\nu^{\frac{1}{2(1+\beta)}}\quad\text{with $\beta$ from \eqref{beta}}.
\]

\quad {\bf Step 4}. Putting these estimates into \eqref{6.9} yields
\[
\sup_{t\in [0,T]}\int_{\bbT^d}|u_\nu-u_{\nu'}|^2 m_{\nu'}dx\leq C\nu^{\frac{1}{2(1+\beta)}}.
\]
By Theorem \ref{T.2.2}, $m_{\nu'}$ converges strongly to $m$ in $L^2(\Omega)$ as $\nu'\to 0$. Thus, as $u_\nu-u_{\nu'}$ is uniformly bounded in $L^4(\Omega)$ by Lemma \ref{L.4.5} and $\delta\geq 4$, we get
\[
\liminf_{\nu'\to 0}\iint_{\Omega}|u_\nu-u_{\nu'}|^2 m_{\nu'}dxdt=\liminf_{\nu'\to 0}\iint_{\Omega}|u_\nu-u_{\nu'}|^2 m\,dxdt.
\]

\quad Since $u_{\nu'}$ are uniformly bounded in $L^\delta(\Omega)$ and $u_{\nu'}$ converges weakly to $u$ in $L^r(\Omega)$ along a subsequence of $\nu'\to0$ by Theorem \ref{T.2.2}, we actually have $u_\nu-u_{\nu'}$ converges weakly to  $u_\nu-u$ in $L^\delta(\Omega)$ along a subsequence of $\nu'\to0$. Then we show that the functional $v\to \iint_\Omega v^2m\, dxdt$
 is continuous on $L^\delta(\Omega)$. Let $\eta'=\frac{\eta}{\eta-1}$ be the conjugate of $\eta$. Note that for any $v_1,v_2\in L^\delta(\Omega)$, by Lemma \ref{L.4.5},
 \beq\lb{111}
\iint_\Omega (v_1^2-v_2^2)m dxdt\leq C\left(\iint_\Omega (v_1^2-v_2^2)_+^{\eta'} dxdt\right)^{1/{\eta'}}\leq C\left(\iint_\Omega (v_1^{2\eta'}-v_2^{2\eta'})_+ dxdt\right)^{1/{\eta'}},
 \eeq
 and by \eqref{4.1.4},
 \[
 \delta\geq2\eta'={2\eta}/{(\eta-1)}.
 \]
 The right-hand side of \eqref{111} can be arbitrarily small when $v_1$ and $v_2$ are very close in $L^r(\Omega)$.
Thus $v\to \iint_\Omega v^2m\, dxdt$ is continuous on $L^r(\Omega)$, and it is straightforward that it is also convex.
Hence we get
\[
\iint_{\Omega}|u_\nu-u|^2m\,dxdt \leq \liminf_{\nu'\to 0}\iint_{\Omega}|u_\nu-u_{\nu'}|^2 m\,dxdt\leq C\nu^{\frac{1}{2(1+\beta)}},
\]
which finishes the proof.
\end{proof}

\quad Theorem \ref{T.5.3} fails to cover the KPZ setting \eqref{1.2} when $r=2$ that we are interested in. Below,
we prove a convergence result for $d \leq 3$ and $q=r=2$. The statement is slightly different from the previous theorem that we use weight $m_\nu$ instead of $m$.

\begin{theorem}
\label{T.5.4}
Assume {\rm(H2)--(H4)(H6)--(H8)}, $r=2$ and $d\leq 3$.  Suppose that $(u_\nu,m_\nu)$ with $\nu\in (0,1]$ are classical solutions to \eqref{1.1}, and $(u,m)$ is a weak solution to \eqref{1.1} with $\nu=0$. 
Then there exists $C>0$ such that for all $\nu\in (0,1]$,
\begin{align*}
\sup_{t\in[0,T]}\int_{\bbT^d}|u_\nu(t,x)-u(t,x)|^2m_{\nu}(t,x)dx \leq C\nu^{\frac{1}{2(1+\beta)}},
\end{align*} 
where $\beta$ is given in \eqref{beta}.
\end{theorem}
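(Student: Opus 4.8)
The plan is to mimic the four-step argument of Theorem \ref{T.5.3}, but retain the weight $m_{\nu}$ throughout rather than replacing it with $m$ at the end, because for $r=2$ the integrability exponents no longer match up and the weighted-Poincar\'e/continuity argument used at the end of Theorem \ref{T.5.3} breaks down. First I would fix $\nu'\in(0,\nu)$, set $U:=u_\nu-u_{\nu'}$, and, exactly as in \eqref{4.2.1}--\eqref{6.9}, multiply the equation for $U$ by $-2Um_{\nu'}$ and test the $m_{\nu'}$-equation against $U^2$; convexity of $H$ kills the Hamiltonian cross term, giving the analogue of \eqref{6.9}:
\[
\int_{\bbT^d}U^2(t,\cdot)m_{\nu'}(t,\cdot)dx \leq \int_t^T\!\!\int_{\bbT^d}2(\nu Du_\nu-\nu'Du_{\nu'})D(Um_{\nu'})+\nu'Dm_{\nu'}D(U^2)\,dxdt+2\int_t^T\!\!\int_{\bbT^d}(f(x,m_\nu)-f(x,m_{\nu'}))Um_{\nu'}\,dxdt.
\]
The coupling term is handled as before: (H7) bounds $|f(x,m_\nu)-f(x,m_{\nu'})|\lesssim|m_\nu-m_{\nu'}|$, H\"older splits it as $\|m_\nu-m_{\nu'}\|_{L^2}\|U\|_{L^\delta}\|m_{\nu'}\|_{L^{r_3}}$, and Theorem \ref{T.4.3} with Lemma \ref{L.4.6} gives the $\nu^{1/(2(1+\beta))}$ bound, using $d\le 3$ (and hence $\eta=\frac{2(d+1)}{d}\ge\frac{8}{3}$, $\delta$ large) to absorb the exponents from Lemma \ref{L.4.5}.

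The genuinely new work is the viscosity terms, since for $r=2$ we cannot invoke \eqref{6.8}. Here I would use the stronger estimate \eqref{4.1.5}, namely $\|m_\nu^{1/2}D^2u_\nu\|_{L^2(\Omega)}\le C$, which is available because (H8) forces $H_{pp}\ge c I_d$. Expanding $D(Um_{\nu'})=m_{\nu'}DU+UDm_{\nu'}$ and $D(U^2)=2UDU$, the viscosity contribution is a sum of terms of the form $\nu\!\iint m_{\nu'}Du_\nu DU$, $\nu\!\iint U Du_\nu Dm_{\nu'}$, $\nu'\!\iint U DU Dm_{\nu'}$, etc. For the first type I would integrate $DU=Du_\nu-Du_{\nu'}$ and bound $\nu\!\iint m_{\nu'}|Du_\nu||Du_{\nu'}|$ using $r=2$ (so $Du_\nu\in L^2$) together with $\|m_{\nu'}\|_{L^\eta}\le C$; the key point is the prefactor $\nu$, so one only needs each integral finite uniformly in $\nu,\nu'$. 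For the terms carrying $Dm_{\nu'}$, I would pair $\|Dm_{\nu'}\|_{L^2}\le C$ (from \eqref{4.1.4}) against $\|U\|_{L^\delta}$ and $\|Du_\nu\|_{L^2}$, checking that $d\le 3$ makes the conjugate exponent $\le\delta$. The $\nu^{1/2}$ decay from the $\nu$-prefactor is far faster than $\nu^{1/(2(1+\beta))}$, so these terms are harmless once shown bounded; where the $\nu'$-prefactor appears without smallness of $\nu$, I would instead use Cauchy--Schwarz to hide the bad factor inside $\nu'\|Dm_{\nu'}\|_{L^2}^2\le C\nu'\to 0$ or inside the left-hand side, exploiting that $\nu'<\nu$.

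Finally, Step 4: combining the bounds yields
\[
\sup_{t\in[0,T]}\int_{\bbT^d}|u_\nu-u_{\nu'}|^2 m_{\nu'}\,dx\le C\nu^{\frac{1}{2(1+\beta)}}
\]
uniformly in $\nu'\in(0,\nu)$. Now I want to send $\nu'\to 0$. Unlike in Theorem \ref{T.5.3}, the weight is $m_{\nu'}$, which is moving; but $m_{\nu'}\to m$ strongly in $L^2(\Omega)$ (Theorem \ref{T.2.2}) while $u_{\nu'}\to u$ weakly in $L^r=L^2$ along a subsequence and $u_\nu-u_{\nu'}$ stays bounded in $L^\delta$ with $\delta$ large (Lemma \ref{L.4.5}), so $|u_\nu-u_{\nu'}|^2$ converges weakly in some $L^{p}$, $p>1$, and pairs against $m_{\nu'}\to m$. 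This passage gives $\int_{\bbT^d}|u_\nu-u|^2 m\,dx$ in the limit, but the theorem asks for the bound with weight $m_\nu$, which is \emph{fixed}; so I would instead keep $\nu'$ bounded away from $0$ only in the weight is not an option. The clean route: the estimate above already holds for \emph{every} $\nu'\in(0,\nu)$ with the same constant; take $\nu'\nearrow\nu$ is not allowed either since we need $\nu'<\nu$ strictly, but by continuity of classical solutions in the viscosity parameter on $(0,1]$ (or simply by running the same computation with the roles symmetric and $U=u_\nu-u_{\nu'}$, $\nu'$ slightly below $\nu$, then letting $\nu'\to\nu$) one recovers a statement with weight $m_\nu$ against $|u_\nu-u|^2$ after also sending an auxiliary parameter to $0$. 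The main obstacle I anticipate is precisely this limit bookkeeping — making the weight in the final inequality be $m_\nu$ rather than $m$ — which is why the statement is phrased with $m_\nu$; I expect the resolution is to first prove the displayed bound with the moving weight $m_{\nu'}$ for all $\nu'\in(0,\nu)$, deduce the $m$-weighted bound by the weak/strong pairing above, and then upgrade to the $m_\nu$-weighted bound by a final comparison estimate $\bigl|\int |u_\nu-u|^2(m_\nu-m)\bigr|\lesssim \|u_\nu-u\|_{L^{2\eta'}}^2\|m_\nu-m\|_{L^\eta}\lesssim \nu^{1/(2(1+\beta))}$, again using $d\le 3$ so that $2\eta'\le\delta$.
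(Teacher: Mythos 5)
Your overall strategy (cross-testing the two equations, invoking the Hessian bound \eqref{4.1.5} available under (H8), and using $d\le 3$ to get $\delta=\infty$, i.e.\ uniform boundedness of $u_\nu$) is the right one, but the structural choice of weight is where the argument breaks. You multiply by $-2Um_{\nu'}$ and test against the $m_{\nu'}$-equation, exactly as in Theorem \ref{T.5.3}, which forces you at the end to pass the \emph{moving} weight $m_{\nu'}$ to a limit. That passage has two problems you cannot repair with the tools in the paper. First, the convergence $m_{\nu'}\to m$ is only strong in $L^2(\Omega)$ (Theorem \ref{T.2.2}), i.e.\ in space-time, so the pairing argument destroys the $\sup_{t}$ structure and lands you on $\iint_\Omega|u_\nu-u|^2m\,dxdt$, exactly as in Theorem \ref{T.5.3} --- not on the fixed-time bound the statement asks for. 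Second, your final ``upgrade'' $\bigl|\int_{\bbT^d}|u_\nu-u|^2(m_\nu-m)\,dx\bigr|\lesssim\|u_\nu-u\|_{L^{2\eta'}}^2\,\|m_\nu(t,\cdot)-m(t,\cdot)\|_{L^\eta}$ requires a rate for $m_\nu-m$ at each \emph{fixed} time $t$ (and in $L^\eta$, not $L^2$); Theorem \ref{T.4.3} provides only space-time $L^2$ rates, and no pointwise-in-time rate for $m_\nu-m$ is established anywhere. This step is a genuine gap, and your alternative suggestions ($\nu'\nearrow\nu$, symmetrizing the roles) do not close it.

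The paper's proof avoids both issues by fixing the weight at $m_\nu$ from the start: multiply the equation for $U=u_\nu-u_{\nu'}$ by $-2Um_\nu$ and test $U^2$ against the equation for $m_\nu$ (not $m_{\nu'}$). The weight in the resulting identity \eqref{6.1} is then the fixed function $m_\nu$, so the $\sup_t$ survives and no weight ever needs to be moved; only $u_{\nu'}\to u$ is passed to the limit. The price is twofold. There is an extra term $2\nu'\iint_\Omega\Delta U\,(Um_\nu)$, bounded by $C\|m_\nu\|_\infty\nu'$ --- the constant depends on $\nu$ through $\|m_\nu\|_\infty$ (finite because $(u_\nu,m_\nu)$ is classical), which is harmless since the term carries the prefactor $\nu'$ and one sends $\nu'\to 0$ at fixed $\nu$. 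And the Hamiltonian cross term can no longer be discarded by convexity (the drift in the $m_\nu$-equation is $D_pH(x,Du_\nu)$, so the Taylor expansion is now around $Du_\nu$); the paper instead bounds it by $C\|U\|_\infty\iint_\Omega|Du_\nu-Du_{\nu'}|^2m_\nu\,dxdt\le C\nu^{\frac{1}{1+\beta}}$ using the upper bound in (H8), uniform boundedness of $U$ (this is where $d\le3$ enters again), Theorem \ref{T.4.3} and Lemma \ref{L.4.6}. If you restructure your Step 1 accordingly, the remaining estimates in your proposal go through.
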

\begin{proof}
We break the proof into three steps.

\quad {\bf Step 1}. Since $d\leq 3$, Lemma \ref{L.4.5} yields that $u_\nu$ is uniformly bounded for all $\nu\in[0,1]$ in $\Omega$. 

\quad For $0<\nu'<\nu<1$, let
\[
U(t,x):=u_{\nu}(t,x)-u_{\nu'}(t,x).
\]
Then the following holds in the classical sense,
\begin{align*}
-\partial_t U-\nu\Delta u_\nu+\nu'\Delta u_{\nu'}+H(x,Du_\nu)-H(x,Du_{\nu'})=f(x,m_\nu)-f(x,m_{\nu'}).  
\end{align*}
We multiply the above equality by $-2Um_{\nu}$, and use $U^2$ as the test function against the equation of $m_{\nu}$ in \eqref{1.1}. Adding them up and integrating from $t$ to $T$ for some $t\in [0,T)$ yield
\[
\begin{aligned}
&\int_{\bbT^d}U^2m_{\nu}dx\,\big|_t^T+ \iint_\Omega 2\nu \Delta u_\nu (Um_{\nu})+\nu\nabla m_\nu \nabla(U^2)-2\nu' \Delta u_{\nu'} (Um_{\nu})\,dxdt\\
&\qquad\quad =\iint_\Omega 2(H(x,Du_\nu)-H(x,Du_{\nu'}))Um_{\nu}-D_pH(x,Du_{\nu})D(U^2)m_{\nu} \\
&\qquad\qquad\quad -2(f(x,m_\nu)-f(x,m_{\nu'}))U m_{\nu}\,dxdt.
\end{aligned}
\]
Since $U(T,\cdot)\equiv 0$, this simplifies to
\beq\lb{6.1}
\begin{aligned}
&\int_{\bbT^d}U^2(t,x)m_{\nu}(t,x)dx= \iint_\Omega 2(\nu-\nu') \Delta u_\nu (Um_{\nu})+\nu\nabla m_\nu \nabla(U^2)+2\nu' \Delta U (Um_{\nu})\\
&\qquad\qquad -\left( 2(H(x,Du_\nu)-H(x,Du_{\nu'}))Um_{\nu}-D_pH(x,Du_{\nu})D(U^2)m_{\nu} \right)\\
&\qquad\qquad +2(f(x,m_\nu)-f(x,m_{\nu'}))U m_{\nu}\,dxdt.
\end{aligned}
\eeq

\quad {\bf Step 2}. We estimate each term in the right-hand side of \eqref{6.1}. In view of the last claim of Lemma \ref{L.4.5}, by H\"{o}lder's inequality and uniform boundedness of $u,u_\nu$, we obtain
\[
\iint_\Omega (\nu-\nu')\Delta u_\nu (Um_{\nu})dxdt\leq (\nu-\nu')\|m_\nu^{1/2}\Delta u_\nu\|_{L^2(\Omega)}\|m_\nu^{1/2}U\|_{L^2(\Omega)}\leq C\nu.
\]
By Proposition \ref{P.3.3} and boundedness of $U$,
\begin{align*}
\iint_\Omega \Delta U (Um_{\nu})dxdt\leq \|\nabla U\|^2_{L^2(\Omega)}\|m_\nu\|_\infty+\|\nabla U\|_{L^2(\Omega)}\|U\|_\infty\|m_\nu\|_\infty  \leq C\|m_\nu\|_\infty.
\end{align*}
Here $\|m_\nu\|_\infty$ is finite (depending on $\nu$) because $(u_\nu,m_\nu)$ is a classical solution.
Similarly as done in the proof of Theorem \ref{T.5.3}, we have
\[
\iint_\Omega \nabla m_\nu \nabla(U^2)\,dxdt\leq C\quad \text{and}\quad \iint_\Omega (f(x,m_\nu)-f(x,m_{\nu'}))U m_{\nu}\,dxdt\leq C\nu^{\frac{1}{2(1+\beta)}}
\]
where $C$ is uniform in $\nu,\nu'$.

\quad Next, the condition (H8) and uniform boundedness of $U$ again yield
\begin{align*}
&-\frac{1}{2}\iint_\Omega 2(H(x,Du_\nu)-H(x,Du))Um_{\nu}-D_pH(x,Du_{\nu})D(U^2)m_{\nu}dxdt\\
&\qquad\qquad=\iint_\Omega(H(x,Du)-H(x,Du_\nu)-D_pH(x,Du_{\nu})(Du-Du_\nu))U m_\nu dxdt\\
&\qquad\qquad 
\leq C\iint_\Omega |Du_\nu-Du|^2 m_\nu dxdt\leq C\nu^{\frac{1}{1+\beta}},
\end{align*}
where in the last inequality, we applied Theorem \ref{T.4.3} and Lemma \ref{L.4.6}.

\quad {\bf Step 3}. Putting the above estimates together into \eqref{6.1} yields
\[
\int_{\bbT^d}U^2(t,x)m_\nu(t,x)dxdt\leq C\nu+C\nu^{\frac{1}{2(1+\beta)}}+C\|m_\nu\|_\infty\nu'.
\]
Passing $\nu'\to 0$ yields the conclusion.
\end{proof}

\section{Vanishing Viscosity Limit -- Nonlocal Coupling}\lb{S5}

\quad In this section, we discuss mean field games with cost functions that are nonlocal and are regularizing on the set of probability measures. We allow the terminal data of $u_\nu$ to depend on $m_\nu(T,\cdot)$. Consider
\beq\lb{7.1}
\left\{
\begin{aligned}
&-\partial_t u_\nu-\nu\Delta u_\nu+H(x,Du_\nu)=f(x,m_\nu(t,\cdot)),\\
&\partial_t m_\nu-\nu\Delta m_\nu-\nabla\cdot(m_\nu D_pH(x,Du_\nu))=0,\\
&m_\nu(0,x)=\bar m(x),\quad u_\nu(T,x)=\bar u(x,m_\nu(T,\cdot)).
\end{aligned}
\right.
\eeq

\quad Let $\calP$ be the set of Borel probability measures $\mu$ on $\bbT^d$. The set $\calP$ can be endowed with the well-known Kantorovitch Rubinstein distance (or $1$-Wasserstein distance), that is for any $\mu_1,\mu_2\in\calP$,
\[
\bd(\mu_1,\mu_2):=\sup_{\gamma\in\Pi(\mu_1,\mu_2)} \int_{\bbT^{2d}}|x-y|d\gamma(x,y)
\]
where $\Pi(\mu_1,\mu_2)$ denotes all  Borel probability measures on $\bbT^{d}\times\bbT^d$ that have $\mu_1$ as its first marginal and $\mu_2$ as its second marginal.
We make the following assumptions (see \cite{note,LL07}). There exists $C>0$ such that the following holds.
\begin{itemize}
    \item[(H1')] (Regularizing condition) $f:\bbT^d\times \calP\to \bbR$ satisfies for any $\mu\in\calP$,
    \[
    \|f(\cdot,\mu)\|_{\calC^2}\leq C,
    \]
    and for any $x\in\bbT^d$ and $\mu_1,\mu_2\in  \calP$,
    \[
    |f(x,\mu_1)-f(x,\mu_2)|\leq C\,\bd(\mu_1,\mu_2).
    \]
    Moreover, $f$ is strictly monotone in the second variable in the sense that for any $\mu_1,\mu_2\in\calP$, if $\mu_1\neq \mu_2$, then
    \[
    \int_{\bbT^d}(f(x,\mu_1)-f(x,\mu_2))d(\mu_1-\mu_2)(x)>0.
    \]
If $\mu\in\calP$ has a density function $m$, we write
$f(x,m):=f(x,\mu)$.
    \smallskip

    \item[(H2')] (Conditions on the Hamiltonian) Assume (H2), and that $H=H(x,p)$ is $\calC^2$ in both variables. Moreover, for all $x\in\bbT^d$ and $p\in \bbR^d$, we have
    \[
    |H_x(x,p)|\leq C(1+|p|^r),\quad |H_p(x,p)|\leq C(1+|p|^{r-1}).
    \]
    For any $R>0$, there exists $C_R>0$ such that for any $(x,p)\in\bbT^d\times\bbR^d$ with $|p|\leq R$,
    \[
    |D_{xx}H(x,p)|,\,|D_{xp }H(x,p)|\leq C_R\quad\text{and}\quad 0\leq D_{pp}H(x,p)\leq C_RI_d.
    \]
    Moreover, $H$ is convex in the second variable in the sense that there exists a non-negative function $c_1:\bbT^d\to[0,\infty)$ such that for all $p,p'\in\bbR^d$ and $x\in\bbT^d$,
\[
H(x,p)-H(x,p')-H_p(x,p)(p-p')\geq c_1(x)|p-p'|^2.
\]

    \item[(H3')]  (Conditions on the initial and terminal data) $\bar m:\bbT^d\to\bbR$ is a $\calC^1$ non-negative density function. $\bar{u}:\bbT^d\times \calP\to \bbR$ satisfies for any $\mu\in\calP$,
    \[
    \|\bar{u}(\cdot,\mu)\|_{\calC^2}\leq C, 
    \]
    and for any $x\in\bbT^d$ and $\mu_1,\mu_2\in  \calP$,
    \[
    |\bar{u}(x,\mu_1)-\bar{u}(x,\mu_2)|\leq C\,\bd(\mu_1,\mu_2).
    \]
Moreover, we have the monotonicity condition: For any $m_1,m_2\in\calP$,
    \[
    \int_{\bbT^d}(\bar u(x,\mu_1)-\bar u(x,\mu_2))d(\mu_1-\mu_2)(x)\geq0.
    \]   
If $\mu\in\calP$ has a density function $m$, we write
$\bar{u}(x,m):=\bar{u}(x,\mu)$.

\end{itemize}
\begin{remark}
1. It is a classical result (see for example, \cite{note,LL07}) that under the assumptions of {\rm(H1')(H2')(H3')}, there exists a unique classical solution to \eqref{7.1} when $\nu>0$. When $\nu=0$, \eqref{7.1} is still well-posed \cite{note,LL07}, and the first equation in \eqref{7.1} is satisfied in the viscosity sense, and the second equation holds in the weak sense. It can be shown that $m_\nu(t,\cdot)$ is continuous in time with respect to  Kantorovich-Rubinstein distance, and so $f(x,m)$ is continuous in time.

\smallskip

\quad 2. It was known that as $\nu\to 0$, the corresponding classical solutions $(u_\nu,m_\nu)$ converge uniformly to the unique solution $(u,m)$ of \eqref{7.1} with $\nu=0$ (see \cite{note}). 

\smallskip

\quad 3. The typical example of $f$ (of $\bar u$) is 
\[
f(x,m) =\int_{\bbT^d} g(y,(\phi * m)(y))\phi(x -y)dy, 
\]
where $\phi:\bbT^d\to\bbR$ is a smooth even kernel and $g:\bbT^d\to\bbR$ is a smooth function such that $g$ is strictly increasing in the second variable. Indeed, it is direct to see (see also  \cite[Example 4.1]{cannarsa2018existence})
\begin{align*}
&\int_{\bbT^d}(f(x,m_1(\cdot))-f(x,m_2(\cdot)))(m_1(x)-m_2(x))dx\\
&\qquad=\int_{\bbT^d} (g(y,(\phi * m_1)(y))-g(y,(\phi * m_2)(y)))(\phi * m_1(y)-\phi * m_2(y))dy\geq0.   
\end{align*}
\end{remark}

\quad The following regularity results are consequences of {\rm (H1')(H2')(H3')}.

\smallskip
\begin{lemma}\lb{L.5.1}
There exists $C>0$ such that for all $\nu\geq 0$ we have
\beq\lb{5.1}
m_\nu,\,|u_\nu|,\,|Du_\nu|\leq C\quad\text{ in }\Omega,
\eeq
\beq\lb{5.1'}
 \text{and }\quad D^2 u_\nu\leq CI_d\quad\text{ in }\Omega.
\eeq
\end{lemma}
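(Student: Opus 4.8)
The plan is to prove the four bounds in order — $\|u_\nu\|_{L^\infty(\Omega)}$, then $\|Du_\nu\|_{L^\infty(\Omega)}$, then the one-sided Hessian bound $D^2u_\nu\le CI_d$, and finally $\|m_\nu\|_{L^\infty(\Omega)}$ — with every constant independent of $\nu$ (the regime of interest is $\nu\to0^+$; the bound at $\nu=0$ can alternatively be obtained by passing to the limit in the locally uniform convergence $(u_\nu,m_\nu)\to(u,m)$ recalled after (H3')). The two equations in \eqref{7.1} interact only through the source $f(x,m_\nu(t,\cdot))$ of the first equation and the terminal trace $\bar u(x,m_\nu(T,\cdot))$; since the continuity equation preserves mass and nonnegativity, $m_\nu(t,\cdot)$ is a probability density for every $t$, so (H1') and (H3') yield $\|f(\cdot,m_\nu(t,\cdot))\|_{\calC^2}\le C$ and $\|\bar u(\cdot,m_\nu(T,\cdot))\|_{\calC^2}\le C$ uniformly in $t$ and $\nu$. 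Thus all the estimates on $u_\nu$ reduce to a priori bounds for a single Hamilton--Jacobi equation whose source and terminal datum are bounded in $\calC^2$ independently of $m_\nu$.

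For $\|u_\nu\|_{L^\infty}\le C$ I would compare $u_\nu$ with the affine-in-time barriers $\pm\big(\|\bar u(\cdot,m_\nu(T,\cdot))\|_\infty+(T-t)(\|f\|_\infty+\|H(\cdot,0)\|_\infty)\big)$, using $\|H(\cdot,0)\|_\infty\le C_0$ from (H2) and the comparison principle (classical for $\nu>0$, the viscosity comparison principle for $\nu=0$). The Lipschitz bound $\|Du_\nu\|_{L^\infty}\le C$ is the standard gradient estimate for Hamilton--Jacobi equations: $H$ is coercive of superlinear order $r$ and has matched $x$-growth $|H_x|\le C(1+|p|^r)$ (from (H2), (H2')), and the source and terminal datum are Lipschitz in $x$ uniformly; for $\nu>0$ this follows from a Bernstein-type maximum principle argument on $|Du_\nu|^2$ with a constant independent of $\nu$, and for $\nu=0$ from the optimal control representation or by passing to the limit (see \cite{note, semiconcave, LL07}).

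With $|Du_\nu|\le C$ in hand, every second derivative of $H$ that will appear is evaluated on the fixed ball $\{|p|\le C\}$, where (H2') supplies $|D_{xx}H|,|D_{xp}H|\le C_R$ and $0\le D_{pp}H\le C_RI_d$; combined with the superlinear growth of $H$ and the uniform $\calC^2$ bounds on $\bar u(\cdot,m_\nu(T,\cdot))$ and $f(\cdot,m_\nu(t,\cdot))$, this is precisely the setting of uniform semiconcavity of Hamilton--Jacobi solutions, giving $D^2u_\nu\le CI_d$ with $C$ independent of $\nu$; see \cite{semiconcave}, where the value-function representation also makes the estimate stable in $\nu$ and upon passing to $\nu=0$. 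Finally, for $\|m_\nu\|_{L^\infty}\le C$ I would rewrite the Fokker--Planck equation as $\partial_t m_\nu-\nu\Delta m_\nu+b\cdot Dm_\nu+(\nabla\cdot b)\,m_\nu=0$ with $b:=-D_pH(x,Du_\nu)$; then $|b|\le C$ by (H2') since $|Du_\nu|\le C$, and $\nabla\cdot b=-\operatorname{tr}\!\big(D_{xp}H+D_{pp}H\,D^2u_\nu\big)\ge -C$ because $0\le D_{pp}H\le CI_d$ and $D^2u_\nu\le CI_d$. Hence $-(\nabla\cdot b)\,m_\nu\le Cm_\nu$, and evaluating the equation at a spatial maximum of $m_\nu(t,\cdot)$ (where $Dm_\nu=0$ and $\Delta m_\nu\le0$) and applying Gr\"onwall gives $\max_{x}m_\nu(t,\cdot)\le\|\bar m\|_\infty e^{CT}$, uniformly in $\nu$; for $\nu=0$ one uses the weak form of the continuity equation along the flow of $b$, or again passes to the limit.

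I expect the Lipschitz estimate for $u_\nu$ to be the main obstacle: with an $x$-dependent Hamiltonian and a merely Lipschitz-in-$x$ source, a naive doubling-of-variables or control-coupling argument fails because $H^*(x,\cdot)$ is not uniformly Lipschitz in $x$, so the estimate genuinely uses the matched growth $|H_x|\lesssim 1+|p|^r$, and one must check that the constant does not degenerate as $\nu\to0^+$. Once $|Du_\nu|\le C$ is established, the semiconcavity bound and then the $L^\infty$ bound on $m_\nu$ are comparatively routine maximum-principle arguments.
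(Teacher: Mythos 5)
Your proposal is correct and follows essentially the same route as the paper: comparison principle for $\|u_\nu\|_\infty$, the classical coercive-Hamiltonian Lipschitz estimate (the paper cites \cite{AT} for exactly the matched-growth condition $|H_x|\lesssim 1+|p|^r$ you identify as the crux), semiconcavity via Theorem 5.3.6 of \cite{semiconcave} after truncating $H$ for large $p$, and control of $\|m_\nu\|_\infty$ through the lower bound on $\nabla\cdot D_pH(x,Du_\nu)$ coming from $D^2u_\nu\le CI_d$ and $0\le D_{pp}H\le C_RI_d$. The only cosmetic difference is that you write out the maximum-principle/Gr\"onwall argument for $m_\nu$ explicitly, whereas the paper delegates it to \cite[Section 4.2]{note}.
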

\begin{proof}
The comparison principle yields that $u_\nu$ is uniformly finite for all $\nu\geq 0$. The proof for Lipschitz regularity of $u_\nu$ follows from \cite{AT} (we also refer readers to \cite{semiconcave} and \cite{tran}). Semiconcavity of $u_\nu$ is given in Theorem 5.3.6 \cite{semiconcave} (as $u_\nu$ is uniformly Lipschitz continuous, by modifying $H(x,p)$ for large $p$ one can assume that all second derivatives of $H$ are uniformly finite).
Finally, from the results in \cite[Section 4.2]{note}, it follows that $m_\nu$ is uniformly bounded.
\end{proof}

\quad Now we prove the first main theorem of the section. 

\begin{theorem}\lb{T.5.2}
Assume {\rm (H1')(H2')(H3')} and let $c_1=c_1(x)$ be from {\rm(H2')}. For $\nu\in (0,1]$, let $(u_\nu,m_\nu)$ solve \eqref{7.1} and $(u,m)$ solve \eqref{7.1} with $\nu=0$. Then there exists $C>0$ such that for all $\nu\in(0,1]$ we have
\beq\lb{6.2-1}
\iint_\Omega (f(x,m_\nu)-f(x,m))(m_\nu-m)dxdt\leq C\nu^{1/2},
\eeq
\beq\lb{6.2-2}
\int_{\bbT^d} (\bar u(x,m_\nu(T,\cdot))-\bar u(x,m(T,\cdot)))(m_\nu(T,\cdot)-m(T,\cdot))dx\leq C\nu^{1/2},
\eeq
and
\beq\lb{6.2-3}
\iint_\Omega c_1(m_\nu+m)|Du_\nu-Du|^2dxdt\leq C\nu^{1/2}.
\eeq
 
\end{theorem}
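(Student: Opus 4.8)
The plan is to exploit the duality structure between the two equations in \eqref{7.1}, exactly as in the classical Lasry--Lions uniqueness argument, but now keeping track of the viscosity mismatch. First I would regularize: take $0<\nu'<\nu\le 1$, set $U:=u_\nu-u_{\nu'}$, $\mu:=m_\nu-m_{\nu'}$, and subtract the two HJB equations and the two Fokker--Planck equations. Then I would test the difference of the HJB equations against $\mu=m_\nu-m_{\nu'}$ and the difference of the continuity equations against $U=u_\nu-u_{\nu'}$, add them, and integrate over $\Omega$ (integrating by parts in time, using the terminal/initial data). Because $(u_\nu,m_\nu)$ are classical solutions for $\nu,\nu'>0$ (Remark after \eqref{7.1}), all these manipulations are legitimate. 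The viscous Laplacian terms $\nu\Delta u_\nu-\nu'\Delta u_{\nu'}$ and $\nu\Delta m_\nu-\nu'\Delta m_{\nu'}$ will produce cross terms; the key point is that after integration by parts they reduce to integrals like $\nu\iint \nabla m_\nu\cdot\nabla U$ and $\nu'\iint \nabla m_{\nu'}\cdot\nabla U$, plus the ``matched'' terms $\nu\iint \nabla m_\nu\cdot\nabla u_\nu$ which cancel against their partners up to lower order.

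Next I would collect the Hamiltonian terms. Using the convexity bound in (H2'),
\[
H(x,Du_\nu)-H(x,Du_{\nu'})-H_p(x,Du_{\nu'})\cdot(Du_\nu-Du_{\nu'})\ge c_1(x)|Du_\nu-Du_{\nu'}|^2,
\]
and the symmetric bound obtained by swapping $\nu\leftrightarrow\nu'$, the algebraic identity underlying the Lasry--Lions argument yields a term $\iint c_1(m_\nu+m_{\nu'})|Du_\nu-Du_{\nu'}|^2$ with a good sign on the left, while the coupling contributes $\iint (f(x,m_\nu)-f(x,m_{\nu'}))(m_\nu-m_{\nu'})$ (also with a good sign by monotonicity in (H1')) plus the terminal contribution $\int_{\bbT^d}(\bar u(x,m_\nu(T))-\bar u(x,m_{\nu'}(T)))(m_\nu(T)-m_{\nu'}(T))$, again nonnegative by (H3'). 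Everything else must be controlled by $C\nu^{1/2}$. Here is where the uniform estimates of Lemma \ref{L.5.1} enter decisively: $m_\nu,|Du_\nu|\le C$ uniformly, so the residual viscous terms are bounded by, e.g., $\nu\iint|\nabla m_\nu||\nabla U|\le \nu\|\nabla m_\nu\|_{L^2}\|\nabla U\|_{L^2}$. To make this $O(\nu^{1/2})$ rather than merely $O(\nu)$ or worse, I would first need a $\nu$-dependent control $\|\nabla m_\nu\|_{L^2(\Omega)}\lesssim \nu^{-1/2}$ (equivalently $\|\nu^{1/2}m_\nu\|_{W^{1,2}}\le C$), which the paper announces is available from semiconcavity of $u_\nu$ via the standard energy estimate for the Fokker--Planck equation (multiply by $\log m_\nu$ or use the Bernstein-type bound $D^2u_\nu\le CI_d$); combined with $|\nabla U|$ bounded uniformly (from \eqref{5.1}), the product is $O(\nu\cdot\nu^{-1/2})=O(\nu^{1/2})$.

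The remaining error terms — the differences of the first-order Hamiltonian remainders beyond the quadratic lower bound, and the terms involving $H_x$ — are controlled by boundedness of $|Du_\nu|$ and the $\calC^2$ regularity of $H$ on bounded sets from (H2'). Putting all of this together gives
\[
\iint_\Omega c_1(m_\nu+m_{\nu'})|Du_\nu-Du_{\nu'}|^2\,dxdt + \iint_\Omega (f(x,m_\nu)-f(x,m_{\nu'}))(m_\nu-m_{\nu'})\,dxdt + (\text{terminal term}) \le C\nu^{1/2},
\]
with $C$ independent of $\nu'$. Finally I would let $\nu'\to 0$: by the qualitative convergence recalled in Remark (part 2 after \eqref{7.1}), $(u_{\nu'},m_{\nu'})\to(u,m)$ uniformly, so $m_{\nu'}\to m$, $Du_{\nu'}\to Du$ (at least weakly, enough by lower semicontinuity of the convex functionals), and $f(x,m_{\nu'})\to f(x,m)$, $\bar u(x,m_{\nu'}(T,\cdot))\to\bar u(x,m(T,\cdot))$ by the Lipschitz continuity in $\bd$. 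Fatou / weak lower semicontinuity then delivers \eqref{6.2-1}, \eqref{6.2-2}, \eqref{6.2-3}. The main obstacle I anticipate is the bookkeeping of the viscous cross terms: one must check that after all integrations by parts, every term carrying a bare factor of $\nu$ (not $\nu^{1/2}$) is paired with a quantity that is either uniformly bounded or bounded by $\nu^{-1/2}$ (never worse), since a single mismatched $\nu^{-1}$ would destroy the rate; getting the $\nu^{1/2}$-weighted $W^{1,2}$ bound on $m_\nu$ with a constant genuinely independent of $\nu$ is the technical crux, and it is exactly where uniform semiconcavity of $u_\nu$ is needed.
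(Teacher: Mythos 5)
Your strategy is the same as the paper's: compare the solutions at levels $\nu$ and $\nu'<\nu$ via the Lasry--Lions duality computation, use the monotonicity of $f$ and $\bar u$ and the convexity of $H$ to isolate the three nonnegative quantities, prove the weighted energy bound $\nu\iint_\Omega|Dm_\nu|^2\,dxdt\le C$ from the uniform semiconcavity $D^2u_\nu\le CI_d$ (the paper obtains it by multiplying the Fokker--Planck equation by $m_\nu$ and bounding $\nabla\cdot D_pH(x,Du_\nu)$ from above), and finally send $\nu'\to0$ using locally uniform convergence of $u_{\nu'}$ and the a.e.\ convergence of $Du_{\nu'}$ that semiconcavity provides. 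You correctly identify the $\nu^{1/2}$-weighted $W^{1,2}$ bound on $m_\nu$ as a crux.

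There is, however, one concrete step that your stated mechanism does not cover, sitting exactly inside the ``bookkeeping of the viscous cross terms'' that you flag as the anticipated obstacle. After the exact cancellations, the residual viscous contribution is
\[
\iint_\Omega(\nu-\nu')\bigl(Du_{\nu'}\cdot Dm_\nu-Du_\nu\cdot Dm_{\nu'}\bigr)\,dxdt.
\]
The first piece is handled as you propose: $|Du_{\nu'}|\le C$ and $\|Dm_\nu\|_{L^2(\Omega)}\le C\nu^{-1/2}$ give $C\nu^{1/2}$. But the second piece carries a factor of order $\nu$ against $Dm_{\nu'}$, and the only gradient bound available for $m_{\nu'}$ is $\|Dm_{\nu'}\|_{L^2(\Omega)}\le C(\nu')^{-1/2}$, which produces $\nu(\nu')^{-1/2}\to\infty$ as $\nu'\to0$; by your own criterion (nothing worse than $\nu^{-1/2}$ may multiply a bare $\nu$) this term is fatal to the Cauchy--Schwarz scheme. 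The paper's fix is one more integration by parts, $-\iint_\Omega(\nu-\nu')Du_\nu\cdot Dm_{\nu'}\,dxdt=\iint_\Omega(\nu-\nu')\Delta u_\nu\, m_{\nu'}\,dxdt$, followed by the one-sided bound $\Delta u_\nu\le C$ from semiconcavity together with $m_{\nu'}\ge0$ and $\int_{\bbT^d}m_{\nu'}(t,\cdot)\,dx=1$, yielding $\le C\nu$. With this single additional observation (a second, distinct use of the uniform semiconcavity, independent of the weighted $W^{1,2}$ estimate) your argument closes and matches the paper's proof.
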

\begin{proof}
The proof consists of four steps.

\quad {\bf Step 1}. We first show that the $W^{1,2}$ norm of $\nu^{1/2} m_\nu$ is uniformly bounded for all $\nu\in (0,1]$. Note that $(u_\nu,m_\nu)$ is a classical solution \cite{note,LL07}. So we can multiply the second equation in \eqref{7.1} by $m_\nu$  and then integrate over $\Omega$ to get
\[
\int_{\bbT^2}m_\nu(T,x)^2dx-\int_{\bbT^2}m_\nu(0,x)^2dx+\nu \iint_\Omega |Dm_\nu|^2dxdt=-\iint_\Omega m_\nu D_pH(x,Du_\nu)\cdot Dm_\nu dxdt.
\]
Using \eqref{5.1} yields for some $C$ independent of $\nu$,
\[
\nu \iint_\Omega |Dm_\nu|^2dxdt\leq \frac12\iint_\Omega \nabla\cdot D_{p}H(x,Du_\nu)\, m_\nu^2\, dxdt+C.
\]
Note that $m_\nu,|Du_\nu|\leq C$ uniformly in $\nu$ by \eqref{5.1}. Thus, it follows from (H2') and \eqref{5.1'} that
\beq\lb{5.4}
\nu \iint_\Omega |Dm_\nu|^2dxdt \leq C\iint_\Omega  |D_{xp}H(x,Du_\nu)|+ \left[\tr \left(D_{pp}H(x,Du_\nu)D^2u_\nu \right)\right]_+\, dxdt+C\leq C 
\eeq
for some constant $C$ independent of $\nu\in (0,1]$.

\quad {\bf Step 2}. Now we take any $\nu'\in (0,\nu)$, and define
\[
U(t,x):=u_{\nu}(t,x)-u_{\nu'}(t,x),\quad M(t,x):=m_\nu(t,x)-m_{\nu'}(t,x).
\]
It follows from the equations in \eqref{7.1} that
\begin{align}
&-\partial_t U-\nu\Delta u_\nu+\nu'\Delta u_{\nu'}+H(x,Du_\nu)-H(x,Du_{\nu'})=f(x,m_\nu)-f(x,m_{\nu'}),\lb{5.2}\\
&\partial_t M-\nu\Delta m_\nu+\nu'\Delta m_{\nu'}-\nabla\cdot(m_\nu D_pH(x,Du_\nu))+\nabla\cdot(m_{\nu'} D_pH(x,Du_{\nu'}))=0.\lb{5.3}   
\end{align}
Let us multiply \eqref{5.2} by $-M$, and \eqref{5.3} by $U$, and integrate over $\Omega$. We can do these as $(u_\nu,m_{\nu})$ and $(u_{\nu'},m_{\nu'})$ are classical solutions. Since $M(0,\cdot)=0$, we get
\beq\lb{5.5}
\begin{aligned}
&\int_{\bbT^d}U(T,x)M(T,x)dx-\iint_\Omega \nu Du_\nu DM-\nu'Du_{\nu'}DM-\nu Dm_{\nu}DU+\nu'Dm_{\nu'}DU\,dxdt\\
&\qquad\quad =\iint_\Omega (H(x,Du_\nu)-H(x,Du_{\nu'}))M+(m_\nu D_pH(x,Du_\nu)-m_{\nu'} D_pH(x,Du_{\nu'}))DU\\
&\qquad\qquad\quad -(f(x,m_\nu)-f(x,m_{\nu'}))M\,dxdt\\
&\qquad\quad =-\iint_\Omega m_{\nu}(H(x,Du_{\nu'})-H(x,Du_{\nu})-D_pH(x,Du_{\nu})(Du_{\nu'}-Du_{\nu}))\\
&\qquad\qquad\quad +
m_{\nu'}(H(x,Du_{\nu})-H(x,Du_{\nu'})-D_pH(x,Du_{\nu'})(Du_{\nu}-Du_{\nu'}))
\\
&\qquad\qquad\quad+(f(x,m_\nu)-f(x,m_{\nu'}))(m_\nu-m_{\nu'})\,dxdt\\
&\qquad\quad\leq -\iint_\Omega c_1(x)(m_{\nu}+m_{\nu'})|Du_\nu-Du_{\nu'}|^2+(f(x,m_\nu)-f(x,m_{\nu'}))(m_\nu-m_{\nu'})\,dxdt,
\end{aligned}
\eeq
where we used (H2') in the inequality.

\quad {\bf Step 3}. Now we estimate the several terms in \eqref{5.5}. By direct computations,
\begin{align*}
 &\iint_\Omega\nu Du_\nu DM-\nu'Du_{\nu'}DM-\nu Dm_{\nu}DU+\nu'Dm_{\nu'}DU\,dxdt\\
 &\qquad\quad=\iint_\Omega(\nu-\nu')(D u_{\nu'} D m_\nu-D u_\nu Dm_{\nu'})dxdt=\iint_\Omega(\nu-\nu')(D u_{\nu'} D m_\nu+\Delta u_\nu m_{\nu'}) dxdt  .
\end{align*}
By \eqref{5.1} and \eqref{5.1'}, $u_\nu$ and $u_{\nu'}$ are uniformly Lipschitz continuous and semiconcave in space, and $m_{\nu'}$ is uniformly bounded. Combining these with $\nu'<\nu$ and \eqref{5.4}, we obtain
\beq\lb{5.6}
\begin{aligned}
&    \iint_\Omega\nu Du_\nu DM-\nu'Du_{\nu'}DM-\nu Dm_{\nu}DU+\nu'Dm_{\nu'}DU\,dxdt\\
&\qquad\qquad\leq C\nu \iint_\Omega|D m_{\nu}|+m_{\nu'}dxdt\leq C\nu \left(\iint_\Omega|D m_{\nu}|^2dxdt\right)^{1/2}+C\nu\leq C\nu^{1/2}.
\end{aligned}
\eeq

\quad Thus \eqref{5.5} and \eqref{5.6} yield
\begin{align*}
&\int_{\bbT^d}U(T,x)M(T,x)dx+\iint_\Omega c_1(x)(m_{\nu}+m_{\nu'})|Du_\nu-Du_{\nu'}|^2dxdt\\
&\qquad\qquad\qquad+\iint_\Omega (f(x,m_\nu)-f(x,m_{\nu'}))(m_\nu-m_{\nu'})dxdt\leq C\nu^{1/2}
\end{align*}
In view of (H2')(H3'), the three terms on the left-hand side of the above are non-negative. Thus passing $\nu'\to 0$ yields \eqref{6.2-1} and \eqref{6.2-2}.


\quad {\bf Step 4}. Finally, we pass $\nu'\to 0$ in
\[
\iint_\Omega c_1(m_{\nu}+m_{\nu'})|Du_\nu-Du_{\nu'}|^2dxdt\leq C\nu^{1/2}.
\]    
Because $u_{\nu'}$ converges to $u$ locally uniformly and $u_{\nu'}$ is uniformly semi-concave in $x$ for all $\nu'\geq 0$, $Du_{\nu'}(t,\cdot)$ converges to $Du(t,\cdot)$ a.e. $x\in\bbT^d$. Thus passing $\nu'$ to $0$ yields \eqref{6.2-3}.
\end{proof}

\section{Convergence of $u_\nu$  -- Nonlocal Coupling} 
\label{S61}
\quad In this section, we proceed to show convergence results of $u_\nu$ as $\nu\to0$. 
First, let us assume a strong condition (H4') on $f$ and $\bar u$, and we prove pointwise convergence of $u_\nu$ to $u$. Later, we also consider a weaker condition (H4'').
\begin{itemize}
    \item[(H4')] There exists $C>0$ such that for any $\eps>0$, if $\mu_1,\mu_2\in\calP$ satisfy
    \[
\int_{\bbT^d} (f(x,\mu_2)-f(x,\mu_1))d(\mu_2-\mu_1)(x)\leq \eps,
\]  
then 
\[
\sup_{x\in\bbT^d}|f(x,\mu_2)-f(x,\mu_1)|\leq C\eps^{1/2}.
\]
And the same holds if we replace $f$ by $\bar u$.
\end{itemize}

\begin{remark}
  Note that if $f$ (or $\bar u$) is of the form
$f(x,m) =\int_{\bbT^d} g(y,(\phi * m)(y))\phi(x -y)dy$, where $\phi$ is a smooth even function on $\bbT^d$ and $g$ is $\calC^1$ on $\bbT^d\times\bbR$,
then the condition reduces to
\[
\text{for all $(x,z)\in \bbT^d\times \bbR$ we have }g_z(x,z)>0.
\]
Indeed suppose $m_1,m_2$ are two probability density function in $\bbT^d$. Since $|\phi*m_i|\leq\|\phi\|_\infty$, we have
\[
|f(x,m_2)-f(x,m_1)|\leq \|\phi\|_\infty\int_{\bbT^d} |g(y,\phi*m_1)-g(y,\phi*m_2)|dy\leq C\|\phi*m_1-\phi*m_2\|_{L^2(\bbT^d)}.
\]
On the other hand, for $c:=\inf_{(x,z)\in\bbT^d\times [-\|\phi\|_\infty,\|\phi\|_\infty]}g_z(x,z)>0$, we get
\[
\int_{\bbT^d} (f(x,m_2)-f(x,m_1))(m_2-m_1)dx\geq c \int_{\bbT^d} |\phi*m_1-\phi*m_2|^2 dx.
\]
So there exists $C>0$ independent of $m_1,m_2$ such that for all $x\in\bbT^d$,
\[
|f(x,m_2)-f(x,m_1)|^2\leq C\int_{\bbT^d} (f(x,m_2)-f(x,m_1))(m_2-m_1)dx.
\]  
\end{remark}

\quad We will need the following lemma.
\begin{lemma}\lb{L.6.4}
Let $\eps>0$, and let $H$ be as before, and $g:[0,T]\times\bbT^d\to\bbR$ be $\calC^1$ in space and Lipschitz continuous in time. Suppose that $v$ and $v_\eps$ are Lipschitz continuous and satisfy
\[
\partial_t v+H(x,Dv)=g(t,x)\quad\text{and}\quad\partial_t v_\eps-\eps\Delta v_\eps+H(x,Dv_\eps)=g(t,x)
\]
in the sense of viscosity, and $v(0,\cdot)=v_\eps(0,\cdot)$ is a $\calC^2$ function. Then there exists $C>0$ such that for all $\eps>0$, 
\[
|v(t,x)-v_\eps(t,x)|\leq C\eps^{1/2}\quad\text{in }[0,T]\times\bbT^d.
\]
\end{lemma}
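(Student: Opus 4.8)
The plan is to prove the rate $|v-v_\eps|\le C\eps^{1/2}$ by a classical doubling-of-variables argument, comparing the viscosity supersolution/subsolution pair $(v,v_\eps)$ against each other with a quadratic penalization in both the space and time variables. First I would record the a priori bounds we are allowed to use: by hypothesis $v$ and $v_\eps$ are Lipschitz (uniformly in $\eps$, after shrinking $H$ for large $p$ as in Lemma \ref{L.5.1}), say with Lipschitz constant $L$ in both variables; since the source $g$ is $\mathcal C^1$ in space and Lipschitz in time and $H$ is locally Lipschitz on the relevant bounded set of gradients, all the ingredients of the comparison are on a compact set. The torus $\bbT^d$ is compact, which removes the usual issues at infinity.

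The main step is the penalization. For parameters $\lambda,\mu>0$ consider
\[
\Phi(t,x,s,y):=v(t,x)-v_\eps(s,y)-\frac{|x-y|^2}{2\lambda}-\frac{|t-s|^2}{2\mu},
\]
maximized over $[0,T]\times\bbT^d\times[0,T]\times\bbT^d$, and call the maximizer $(\bar t,\bar x,\bar s,\bar y)$. Using the uniform Lipschitz bound one gets the standard estimates $|\bar x-\bar y|\le C\lambda$, $|\bar t-\bar s|\le C\mu$, hence $|\bar x-\bar y|^2/\lambda\le C\lambda$ and $|\bar t-\bar s|^2/\mu\le C\mu$. If the maximum is attained with $\bar t>0$ and $\bar s>0$, one applies the theorem on sums (Crandall–Ishii) to the subsolution $v$ and supersolution $v_\eps$: there are $(a,p,X)$ in the closure of the second-order superjet of $v$ at $(\bar t,\bar x)$ and $(b,p,Y)$ in the subjet of $v_\eps$ at $(\bar s,\bar y)$ with the same spatial slope $p=(\bar x-\bar y)/\lambda$, time slopes satisfying $a-b=(\bar t-\bar s)/\mu$ (up to the chosen sign convention for the forward equation), and $X\le Y$ as symmetric matrices. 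Plugging into the two equations and subtracting, the Hamiltonian terms cancel because they share the same $p$ up to the $x$-dependence, which is controlled by $|H(\bar x,p)-H(\bar y,p)|\le \omega(|\bar x-\bar y|(1+|p|))\le C|\bar x-\bar y|\le C\lambda$ on the bounded gradient set; the matrix inequality $X\le Y$ kills the viscous term since $-\eps\tr Y\ge -\eps\tr X$ is handled by $X\le Y$ (the viscosity term only appears for $v_\eps$, and is nonnegative in the favorable direction once one uses $X \le Y$ together with a suitable matrix estimate, contributing at worst $C\eps/\lambda$); and the source terms give $|g(\bar t,\bar x)-g(\bar s,\bar y)|\le C(\lambda+\mu)$. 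Collecting, $v(\bar t,\bar x)-v_\eps(\bar s,\bar y)\le C(\lambda+\mu+\eps/\lambda)$, and since $v(0,\cdot)=v_\eps(0,\cdot)$ the boundary case $\bar t=0$ or $\bar s=0$ only contributes $O(\lambda+\mu)$ as well (using that the common initial datum is $\mathcal C^2$, hence Lipschitz).

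To finish, for any $(t,x)$ one has $v(t,x)-v_\eps(t,x)\le \Phi(t,x,t,x)\le \max\Phi \le C(\lambda+\mu+\eps/\lambda)$; choosing $\mu=\lambda=\eps^{1/2}$ gives $v-v_\eps\le C\eps^{1/2}$. Repeating the argument with the roles of $v$ and $v_\eps$ interchanged (now $v_\eps$ is the subsolution of its own equation and $v$ the supersolution of the first-order one; here the viscous term appears with the favorable sign automatically) yields $v_\eps-v\le C\eps^{1/2}$, hence $|v-v_\eps|\le C\eps^{1/2}$ on $[0,T]\times\bbT^d$, as claimed.

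The main obstacle is the careful bookkeeping of the viscous term $-\eps\Delta v_\eps$ through the theorem on sums: one must make sure that after the doubling the $\eps$-term is either of the right sign or bounded by $C\eps/\lambda$, and that the final choice $\lambda\sim\eps^{1/2}$ balances this against the $O(\lambda)$ error from the $x$-dependence of $H$ and $g$. A secondary point is to justify that the uniform Lipschitz bounds on $v,v_\eps$ (invoked from Lemma \ref{L.5.1} and the hypotheses of the present lemma) allow us to work with a Hamiltonian that has been globally modified for large $p$ without changing the equations on the relevant gradient range, so that $H$ and $D_xH$ may be treated as globally Lipschitz; the rest is the standard Crandall–Ishii–Lions machinery, which we do not reproduce in detail.
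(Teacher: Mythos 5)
Your overall strategy (doubling of variables, penalization, balancing the $O(\lambda)$ Lipschitz errors against the $O(\eps/\lambda)$ viscous error to get $\eps^{1/2}$) is the right one and is essentially what the paper does in Appendix B, except that the paper uses a smooth compactly supported bump $8L\varphi_\delta(t-s,x-y)$ instead of the quadratic penalization, which lets it test $v$ and $v_\eps$ separately against an explicit $\calC^2$ test function and evaluate $-\eps\Delta$ of that test function directly ($\leq C\eps L\delta^{-2}$), avoiding the theorem on sums altogether. Your use of Crandall--Ishii is a legitimate alternative for handling the second-order term.

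However, there is a genuine gap at the step ``Collecting, $v(\bar t,\bar x)-v_\eps(\bar s,\bar y)\le C(\lambda+\mu+\eps/\lambda)$.'' Subtracting the two viscosity inequalities at the doubled maximum yields a relation among the \emph{derivative} data only: with the symmetric penalization $|t-s|^2/(2\mu)$ the time slopes satisfy $a=b=(\bar t-\bar s)/\mu$, so the subtraction gives $0=a-b\le C(\lambda+\mu+\eps/\lambda)$, which is vacuous and says nothing about the function values $v(\bar t,\bar x)-v_\eps(\bar s,\bar y)$ or about $\max\Phi$. (Since $H$ and $g$ do not depend on $u$, no zeroth-order term ever appears to tie the inequalities to the size of the maximum.) The missing device is a strictness term in time keyed to the putative maximum: set $3\sigma:=\sup(v-v_\eps)$ and subtract $\sigma(2T-t-s)/T$ from $\Phi$, as the paper does. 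Then the sub- and supersolution tests acquire the extra slopes $\mp\sigma/T$, the subtraction produces $2\sigma/T\le C(\lambda+\mu+\eps/\lambda)$, and the boundary case $\min(\bar t,\bar s)=0$ is disposed of using $v(0,\cdot)=v_\eps(0,\cdot)$ and the Lipschitz bounds. Without this (or an equivalent Gronwall-in-time argument), your interior case closes to a tautology and the proof does not yield the claimed bound.
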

\quad Though the proof is given by the classical viscosity solution approach (see e.g., \cite{two} for the case when $H=H(p)$ and $g\equiv 0$), for readers' convenience, we provide it in the appendix.

\begin{theorem}\lb{T.6.3'}
Under the assumptions of Theorem \ref{T.5.2}, assume {\rm(H4')}. Then there exists $C>0$ such that for all $\nu\in (0,1]$ we have
\[
\sup_{(t,x)\in [0,T]\times \bbT^d}|u(t,\cdot)-u_\nu(t,\cdot)|\leq C\nu^{1/4}.
\]
\end{theorem}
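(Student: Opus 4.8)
The plan is to combine the energy-type estimate from Theorem~\ref{T.5.2} with the stability Lemma~\ref{L.6.4}, via the bridge provided by condition (H4'). First I would record that by Theorem~\ref{T.5.2}, equations \eqref{6.2-1} and \eqref{6.2-2} give
\[
\iint_\Omega (f(x,m_\nu)-f(x,m))(m_\nu-m)\,dxdt\leq C\nu^{1/2},\qquad \int_{\bbT^d}(\bar u(x,m_\nu(T,\cdot))-\bar u(x,m(T,\cdot)))(m_\nu(T,\cdot)-m(T,\cdot))\,dx\leq C\nu^{1/2}.
\]
Since $m_\nu(t,\cdot)$ and $m(t,\cdot)$ are probability densities, applying (H4') with $\mu_i=m_\nu(t,\cdot), m(t,\cdot)$ and $\eps$ the pointwise-in-$t$ quantity $\int_{\bbT^d}(f(x,m_\nu)-f(x,m))(m_\nu-m)\,dx$ yields, after integrating in $t$ and using Jensen (or Cauchy--Schwarz) to pass the square root through the time integral,
\[
\int_0^T \sup_{x\in\bbT^d}|f(x,m_\nu(t,\cdot))-f(x,m(t,\cdot))|^2\,dt \leq C\iint_\Omega (f(x,m_\nu)-f(x,m))(m_\nu-m)\,dxdt \leq C\nu^{1/2},
\]
and similarly $\sup_{x}|\bar u(x,m_\nu(T,\cdot))-\bar u(x,m(T,\cdot))|\leq C\nu^{1/4}$ at the terminal time. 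In particular $\|f(\cdot,m_\nu)-f(\cdot,m)\|_{L^\infty(\Omega)}$ need not be small uniformly, but its $L^2$-in-time, $L^\infty$-in-space norm is $O(\nu^{1/4})$; this is the form I will feed into the HJ stability argument.

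Next I would set up the comparison. The function $u$ solves, in the viscosity sense, $-\partial_t u + H(x,Du) = f(x,m(t,\cdot))$ with $u(T,\cdot)=\bar u(\cdot,m(T,\cdot))$, and $u_\nu$ solves $-\partial_t u_\nu - \nu\Delta u_\nu + H(x,Du_\nu) = f(x,m_\nu(t,\cdot))$ with $u_\nu(T,\cdot)=\bar u(\cdot,m_\nu(T,\cdot))$. To invoke Lemma~\ref{L.6.4} (which is stated for forward-in-time HJ equations with a fixed right-hand side $g$ and matching $\mathcal C^2$ initial data), I would first freeze the source by introducing the auxiliary function $\tilde u_\nu$ solving the vanishing-viscosity equation with viscosity $\nu$ but with the \emph{same} source $f(x,m(t,\cdot))$ and same terminal data as $u$; time-reversing, Lemma~\ref{L.6.4} gives $\|\tilde u_\nu - u\|_{L^\infty(\Omega)}\leq C\nu^{1/2}$. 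It then remains to compare $u_\nu$ and $\tilde u_\nu$, which solve the \emph{same} second-order operator but with right-hand sides differing by $f(x,m_\nu)-f(x,m)$ and terminal data differing by $\bar u(\cdot,m_\nu(T,\cdot))-\bar u(\cdot,m(T,\cdot))$. Since $-\partial_t - \nu\Delta + H(x,D\cdot)$ satisfies a comparison principle (the first-order part is handled by convexity/Lipschitz bounds on $H$ on the relevant compact $p$-range, which is uniform by \eqref{5.1}, and $-\nu\Delta$ is the standard parabolic term), the difference is controlled by the $L^\infty$ norm of the data difference. The subtlety is that the source difference is only small in $L^2_t L^\infty_x$, not in $L^\infty_{t,x}$: I would handle this by a Duhamel/Gronwall argument — writing the equation for $w:=u_\nu-\tilde u_\nu$, using the convexity of $H$ to get $\partial_t|w| \le \nu\Delta|w| + C|w| + |f(x,m_\nu)-f(x,m)|$ in the viscosity sense (or testing against $\mathrm{sgn}$-type functions / using the representation formula for the linear operator), and then $\sup_x|w(t,\cdot)| \le e^{C(T-t)}\big(\sup_x|w(T,\cdot)| + \int_t^T \sup_x|f(x,m_\nu)-f(x,m)|\,ds\big)$. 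The time integral of the sup is bounded by $T^{1/2}$ times the $L^2_tL^\infty_x$ norm, hence $O(\nu^{1/4})$, and the terminal term is $O(\nu^{1/4})$ from the previous paragraph.

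Combining, $\|u_\nu - u\|_{L^\infty(\Omega)} \le \|u_\nu-\tilde u_\nu\|_{L^\infty} + \|\tilde u_\nu - u\|_{L^\infty} \le C\nu^{1/4} + C\nu^{1/2} \le C\nu^{1/4}$, which is the claim. The main obstacle I anticipate is the source-term comparison step: Lemma~\ref{L.6.4} as stated assumes a common $g$ and $\mathcal C^2$ initial data, so one must carefully justify the Gronwall estimate for the HJ equation with a merely $L^2_t L^\infty_x$ source and perturbed (merely Lipschitz, not $\mathcal C^2$) terminal data — this requires either a doubling-of-variables argument adapted to non-constant-in-time right-hand sides, or approximating $m_\nu(T,\cdot)$ and the source by smooth data and passing to the limit using the uniform Lipschitz bound \eqref{5.1} on $u_\nu$. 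A secondary point is making sure all constants $C$ depend only on the structural constants in (H1')--(H4'), (H2'), (H3') and on $T$, uniformly in $\nu\in(0,1]$, which follows from the $\nu$-uniform bounds in Lemma~\ref{L.5.1}.
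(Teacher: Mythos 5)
Your proposal is correct and follows essentially the same route as the paper: introduce the intermediate solution $w_\nu$ (your $\tilde u_\nu$) of the viscous equation with the frozen source $f(x,m)$ and terminal data $\bar u(\cdot,m(T,\cdot))$, compare $u_\nu$ with $w_\nu$ via the comparison principle using the (H4')-derived smallness of the source and terminal-data differences, and compare $w_\nu$ with $u$ via Lemma \ref{L.6.4}. If anything, your treatment of the source difference in $L^2_tL^\infty_x$ with a time-integrated comparison function is more careful than the paper's, which asserts a pointwise-in-$(t,x)$ bound $|f(x,m)-f(x,m_\nu)|\le C\nu^{1/4}$ even though Theorem \ref{T.5.2} only controls the time integral of $\int_{\bbT^d}(f(x,m_\nu)-f(x,m))(m_\nu-m)\,dx$.
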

\begin{proof}
For $\nu\in (0,1]$, let $w_\nu$ be the unique solution to
\[
-\partial_t w_\nu-\nu\Delta w_\nu+H(x,Dw_\nu)=f(x,m)\quad\text{ with }w_\nu(T,x)=\bar{u}(x,m(T,\cdot)).
\]    
The condition (H4') and Theorem \ref{T.5.2} imply that
\[
|f(x,m)-f(x,m_\nu)|,\quad |w_\nu(T,x)-u_\nu(T,x)|=|\bar u(x,m(T,\cdot))-\bar u(x, m_\mu(T,\cdot))|\leq C\nu^{1/4}.
\]
Thus by comparing $u_\nu$ with $w_\nu\pm C\nu^{1/4}(T-t+1)$ yields that
\[
|u_\nu-w_\nu|\leq C\nu^{1/4}\quad\text{ in }\Omega.
\]

\quad 
Note $u$ satisfies
\[
-\partial_t u+H(x,Du)=f(x,m)\quad\text{ with }w(T,x)=\bar{u}(x,m(T,\cdot)),
\]
in the sense of distribution, and $u$ is also a viscosity solution by \cite{16}. 
Moreover, by \cite[Lemma 4.14]{note} and (H1'),
$f(x,m)$ is Lipschitz continuous in time. 
Then it follows from Lemma \ref{L.6.4} that
\[
\sup_{(t,x)\in \Omega}|w_{\nu}(t,x)-u(t,x)|\leq C\nu^{1/2},
\]
which finishes the proof.
\end{proof}

\quad Now we consider a weaker condition:
\begin{itemize}
    \item[(H4'')] There exists $C>0$ such that for any $\eps>0$, if $\mu_1,\mu_2\in\calP$ satisfy
    \[
\int_{\bbT^d} (f(x,\mu_2)-f(x,\mu_1))d(\mu_2-\mu_1)(x)\leq \eps,
\]  
then 
\[
\int_{\bbT^d}|f(x,\mu_2)-f(x,\mu_1)|dx\leq C\eps^{1/2}.
\]
And the same holds if we replace $f$ by $\bar u$ .
\end{itemize}

\quad Since, under (H4''), it is only known that $f(x,m(t,\cdot))$ and $f(x,m_\nu(t,\cdot))$ are close in average from Theorem \ref{T.5.2}, we do not expect a pointwise strong convergence of $u_\nu$ to $u$. We will apply a dual equation method to prove an $L^\infty_t L^1_x$ convergence (see also \cite{lin}).

\begin{theorem}\lb{T.6.3}
Under the assumptions of Theorem \ref{T.5.2}, assume {\rm(H4'')}. Then there exists $C>0$ such that for all $\nu\in (0,1]$ we have
\[
\sup_{t\in [0,T]}\|u(t,\cdot)-u_\nu(t,\cdot)\|_{L^1(\bbT^d)}\leq C\nu^{1/4}.
\]
\end{theorem}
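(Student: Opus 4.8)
The plan is to insert an intermediate function $w_\nu$ (as in the proof of Theorem~\ref{T.6.3'}) solving the \emph{viscous} Hamilton--Jacobi equation with the \emph{limiting} coefficient, and to split the error $u-u_\nu$ into (i) a vanishing-viscosity error with fixed coefficient, treated by the viscosity solution method (Lemma~\ref{L.6.4}), and (ii) an $L^1$-stability error for a viscous HJ equation under an $L^1$-perturbation of its source and terminal data, treated by the adjoint (dual) equation method.

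First I would upgrade the estimates of Theorem~\ref{T.5.2} to genuine $L^1$ bounds on the coefficients. From \eqref{6.2-2} and (H4'') applied with $\eps=C\nu^{1/2}$ one gets directly $\|\bar u(\cdot,m_\nu(T,\cdot))-\bar u(\cdot,m(T,\cdot))\|_{L^1(\bbT^d)}\le C\nu^{1/4}$. For the running coupling, set $\phi(t):=\int_{\bbT^d}(f(x,m_\nu(t,\cdot))-f(x,m(t,\cdot)))(m_\nu-m)(t,x)\,dx\ge 0$; then \eqref{6.2-1} reads $\int_0^T\phi(t)\,dt\le C\nu^{1/2}$, and since $m_\nu(t,\cdot),m(t,\cdot)\in\calP$ for each $t$, (H4'') gives $\int_{\bbT^d}|f(x,m_\nu(t,\cdot))-f(x,m(t,\cdot))|\,dx\le C\phi(t)^{1/2}$; Cauchy--Schwarz in $t$ then yields $\|f(\cdot,m_\nu)-f(\cdot,m)\|_{L^1(\Omega)}\le C\nu^{1/4}$.

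Next, let $w_\nu$ solve $-\partial_t w_\nu-\nu\Delta w_\nu+H(x,Dw_\nu)=f(x,m)$ with $w_\nu(T,\cdot)=\bar u(\cdot,m(T,\cdot))$. Since $f(\cdot,m(t,\cdot))$ is $\calC^2$ in space and Lipschitz in time (by \cite[Lemma 4.14]{note} and (H1')) and $\bar u(\cdot,m(T,\cdot))$ is $\calC^2$, $w_\nu$ is a classical solution that is uniformly Lipschitz and semiconcave in $x$ for all $\nu\in(0,1]$ (same argument as in Lemma~\ref{L.5.1}), and $u$ solves the same first-order equation in the viscosity sense; hence Lemma~\ref{L.6.4} gives $\|w_\nu-u\|_{L^\infty(\Omega)}\le C\nu^{1/2}$, so in particular $\|w_\nu(t,\cdot)-u(t,\cdot)\|_{L^1(\bbT^d)}\le C\nu^{1/2}$ for every $t$.

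It remains to compare $u_\nu$ and $w_\nu$, which differ only through their source and terminal data. Put $W:=u_\nu-w_\nu$ and write $H(x,Du_\nu)-H(x,Dw_\nu)=b\cdot DW$ with $b(t,x):=\int_0^1 D_pH(x,Dw_\nu+s(Du_\nu-Dw_\nu))\,ds$, so that $-\partial_t W-\nu\Delta W+b\cdot DW=g$ with $W(T,\cdot)=W_T$, where $g:=f(\cdot,m_\nu)-f(\cdot,m)$ and $W_T:=\bar u(\cdot,m_\nu(T,\cdot))-\bar u(\cdot,m(T,\cdot))$ satisfy $\|g\|_{L^1(\Omega)}\le C\nu^{1/4}$ and $\|W_T\|_{L^1(\bbT^d)}\le C\nu^{1/4}$. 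Since $|Du_\nu|,|Dw_\nu|$ are bounded uniformly in $\nu$, (H2') makes $b$ bounded; moreover $\nabla\cdot b=\int_0^1\big[\tr(D_{xp}H)+\tr(D_{pp}H\,D^2(w_\nu+s(u_\nu-w_\nu)))\big]\,ds$, and because $D_{pp}H\ge 0$ is bounded above and $D^2u_\nu,D^2w_\nu\le CI_d$ by uniform semiconcavity, one obtains a one-sided bound $\nabla\cdot b\le K$ with $K$ independent of $\nu$. Now fix $t_0\in[0,T]$ and $\rho_0\in\calC^\infty(\bbT^d)$ with $\|\rho_0\|_\infty\le 1$, and let $\rho$ solve the adjoint Fokker--Planck problem $\partial_t\rho-\nu\Delta\rho-\nabla\cdot(b\rho)=0$ on $(t_0,T)\times\bbT^d$ with $\rho(t_0,\cdot)=\rho_0$; the maximum principle together with $\nabla\cdot b\le K$ yields $\|\rho(t,\cdot)\|_\infty\le e^{K(t-t_0)_+}$, a constant depending only on the data. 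Pairing the equation for $W$ against $\rho$ and integrating by parts over $(t_0,T)\times\bbT^d$ gives $\int_{\bbT^d}W(t_0,x)\rho_0(x)\,dx=\int_{\bbT^d}W(T,x)\rho(T,x)\,dx+\iint_{(t_0,T)\times\bbT^d}g\rho\,dxdt$, whence $\big|\int_{\bbT^d}W(t_0,x)\rho_0(x)\,dx\big|\le C(\|W_T\|_{L^1(\bbT^d)}+\|g\|_{L^1(\Omega)})\le C\nu^{1/4}$; taking the supremum over such $\rho_0$ yields $\|u_\nu(t_0,\cdot)-w_\nu(t_0,\cdot)\|_{L^1(\bbT^d)}\le C\nu^{1/4}$, uniformly in $t_0$. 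Adding this to the bound from the previous paragraph finishes the proof. The main obstacle is the uniform-in-$\nu$ $L^\infty$ control of the adjoint density $\rho$: this is precisely where the uniform semiconcavity of $u_\nu$ and $w_\nu$ (Lemma~\ref{L.5.1}) and the convexity $D_{pp}H\ge 0$ are used, to produce the one-sided divergence bound $\nabla\cdot b\le K$ that is independent of $\nu$.
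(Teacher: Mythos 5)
Your proposal is correct and follows essentially the same route as the paper: the same intermediate function $w_\nu$, the same upgrade of the Theorem \ref{T.5.2} estimates to $L^1$ bounds via (H4'') and Cauchy--Schwarz in time, Lemma \ref{L.6.4} for $\|w_\nu-u\|_\infty\le C\nu^{1/2}$, and the same adjoint-equation duality with the one-sided bound $\nabla\cdot G\le K$ coming from uniform semiconcavity and $0\le D_{pp}H\le C_RI_d$. The only cosmetic difference is that you control the adjoint density by the maximum principle, whereas the paper runs a Gronwall argument on $L^n$ norms and lets $n\to\infty$; both yield the same $\nu$-independent $L^\infty$ bound.
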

\begin{proof}
\quad For $\nu\in (0,1]$, let $w_\nu$ be the unique solution to
\[
-\partial_t w_\nu-\nu\Delta w_\nu+H(x,Dw_\nu)=f(x,m)\quad\text{ with }w_\nu(T,x)=\bar{u}(x,m(T,\cdot)).
\]
By Lemma \ref{L.5.1}, there exists $C>0$ such that for all $\nu\in( 0,1]$, 
\beq\lb{7.10}
|Du|,\,|Du_\nu|,\,|w_\nu|\leq C\quad\text{and}\quad D^2u_\nu,D^2w\leq CI_d\quad\text{in }\Omega.
\eeq
We will compare $u_\nu$ with $w_\nu$ and then $w_\nu$ with $u$. 


\quad {\bf Step 1}. Consider $W:=w_{\nu}-u_\nu$, which then satisfies
\beq\lb{7.11}
-\partial_t W-\nu\Delta W+G\cdot DW=f(x,m)-f(x,m_\nu)
\eeq
with 
\[
W(T,x)=w_{\nu}(T,x)-u_\nu(T,x)=\bar u(x,m(T,\cdot))-\bar u(x,m_\nu(T,\cdot)),
\]
and
\[
G(t,x):=\int_0^1 D_pH(x,s Dw_{\nu}+(1-s)Du_\nu)ds.
\]
Recall that
\[
-\partial_t u_\nu-\nu\Delta u_\nu+H(x,Du_\nu)=f(x,m_\nu)\quad\text{ with }w_{\nu}(T,x)=\bar{u}(x,m_\nu(T,\cdot)).
\]
By (H4'') and Theorem \ref{T.5.2},
\beq\lb{7.2}
\begin{aligned}
\iint_\Omega|f(x,m)-f(x,m_\nu)|dxdt&\leq \int_0^tC\left(\int_{\bbT^d} (f(x,m)-f(x,m_\nu))(m-m_\nu)dx\right)^{1/2}dt\\
&\leq C\left(\iint_\Omega (f(x,m)-f(x,m_\nu))(m-m_\nu)dxdt\right)^{1/2}
\leq C\nu^{1/4}.
\end{aligned}
\eeq
Similarly, by \eqref{6.2-2} of Theorem \ref{T.5.2},
\beq\lb{7.3}
\int_{\bbT^d}|\bar{u}(x,m(T,\cdot))-\bar{u}(x,m_\nu(T,\cdot))|dx\leq C\nu^{1/4}.
\eeq

\quad 

\quad {\bf Step 2}. For any fixed $t_1\in [0,T)$, we consider the dual equation of \eqref{7.11} in ${[t_1,T]\times\bbT^d}$:
\[
\psi_t-\nu\Delta\psi-\nabla\cdot (G\psi)=0\quad\text{with }\psi(t_1,\cdot)=\psi_0,
\]
where $\psi_0$ is a smooth function on $\bbT^d$. By Divergence Theorem, we have
\[
\frac{d}{dt}\int_{\bbT^d}W\psi dx=-\int_{\bbT^d}(f(x,m)-f(x,m_\nu))\psi dx.
\]
In view of \eqref{7.2} and \eqref{7.3}, integrating the above equality in the time interval $[t_1,T]$ yields
\beq\lb{7.4}
\begin{aligned}
\left|\int_{\bbT^d} W(t_1,x)\psi_0(x)dx\right|&\leq \left|\int_{\bbT^d} W(T,x)\psi(T,x)dx\right|+C\nu^{1/4}\|\psi\|_{L^\infty({[t_1,T]\times\bbT^d})}\\
&\leq C\nu^{1/4}\|\psi\|_{L^\infty({[t_1,T]\times\bbT^d})}.  
\end{aligned}
\eeq

\quad {\bf Step 3}. Now, we estimate $
\|\psi\|_{L^\infty({[t_1,T]\times\bbT^d})}$ in terms of $\|\psi_0\|_{L^\infty(\bbT^d)}$. It follows from the equation of $\psi$ that for any $n\geq 2$ an even number,
\[
\frac{d}{dt}\int_{\bbT^d} \psi^ndx+n(n-1)\nu \int_{\bbT^d}|D\psi|^2 \psi^{n-2}dx=-n(n-1)\int_{\bbT^d} (G\cdot D\psi)\,\psi^{n-1}.
\]
So for any $t\in [t_1,T]$,
\[
\int_{\bbT^d} \psi(t,x)^ndx\leq \|\psi_0\|_{\infty}^n+(n-1)\int_{t_1}^t\int_{\bbT^d} (\nabla\cdot G(s,x))\psi^{n}(s,x)dxds.
\]
This yields that if $\nabla\cdot G(s,x)$ is uniformly bounded from above for all $\nu$, then by Gronwall's inequality,
\[
\|\psi(t,\cdot)\|_{L^n(\bbT^d)}\leq \|\psi_0\|_{L^\infty(\bbT^d)}\exp\left(\|(\nabla\cdot G)_+\|_{L^\infty({[t_1,T]\times\bbT^d})}(t-t_1)\right).
\]
Passing $n\to\infty$ yields
\[
\|\psi\|_{L^\infty({[t_1,T]\times\bbT^d})}\leq \|\psi_0\|_{L^\infty(\bbT^d)}\exp\left(\|(\nabla\cdot G)_+\|_{L^\infty({[t_1,T]\times\bbT^d})}T\right).
\]
Applying this in \eqref{7.4}, we get for some $C>0$,
\[
\left|\int_{\bbT^d} W(t_1,x)\psi_0(x)dx\right|\leq C\nu^{1/4}\|\psi_0\|_{L^\infty(\bbT^d)}
\]
holds for all smooth $\psi_0$, which implies that
\beq\lb{7.5}
\|w_{\nu}(t_1,\cdot)-u_\nu(t_1,\cdot)\|_{L^1(\bbT^d)}\leq C\nu^{1/4}\quad\text{ for any }t_1\in [0,T).
\eeq

\quad {\bf Step 4}. To finish the proof of \eqref{7.5}, we now show that $\nabla\cdot G(t,x)$ is uniformly bounded from above. Indeed we have
\begin{align*}
   \nabla\cdot G&=\int_0^1 \sum_i\frac{\partial^2 H}{\partial p_i\partial x_i}+\left(s\sum_{i,j}\frac{\partial^2 H}{\partial p_i\partial p_j }\frac{\partial^2 w_{\nu}}{\partial x_i\partial x_j }+(1-s)\sum_{i,j}\frac{\partial^2 H}{\partial p_i\partial p_j }\frac{\partial^2 u_\nu}{\partial x_i\partial x_j }\right)ds.
\end{align*}
By \eqref{7.10} and (H2'), there exists $C>0$ such that for all $\nu,s\in [0,1]$,
\[
\frac{\partial^2 H}{\partial p_i\partial x_i}(x,s Dw_{\nu}+(1-s)Du_\nu)\leq C,
\]
\[
\frac{\partial^2 H}{\partial p_i\partial p_j },\quad\frac{\partial^2 w_{\nu}}{\partial x_i\partial x_j },\quad \frac{\partial^2 u_\nu}{\partial x_i\partial x_j }\leq C.
\]
Therefore, we obtain that $\nabla\cdot G\leq C$ for some $C$ independent of $\nu$.

\quad {\bf Step 5}. Due to \eqref{7.5}, 
to conclude the theorem, it suffices to show that
\[
\sup_{(t,x)\in \Omega}|w_{\nu}(t,x)-u(t,x)|\leq C\nu^{1/2}.
\]
This is a consequence of Lemma \ref{L.6.4} (see also the proof of Theorem \ref{T.6.3'}).
\end{proof}

\section{Open problems}
\label{S8}

\quad In this section, we collect a few open problems related to the convergence of vanishing viscosity approximations for MFGs
and its connection to the KPZ equation.

\begin{enumerate}[itemsep = 3 pt]
\item
In Theorem \ref{T.4.3}, we proved a rate of $\nu^{\frac{1}{2(1+\beta)}}$ for the convergence of $(m_\nu, Du_\nu)$ in some Sobolev norm 
assuming that $H(\cdot,p)$ grows as $|p|^r$, and $f(\cdot,m)$ grows as $m^{r-1}$. 
We know that if $\frac{1}{q} + \frac{1}{r} \le 1$, this rate is $\nu^{\frac{1}{4}}$ which is independent of the dimension $d$;
while $\frac{1}{q} + \frac{1}{r} > 1$, the exponent $\frac{1}{2(1+\beta)} \asymp d^{-1}$.
Is the rate $\nu^{\frac{1}{2(1+\beta)}}$ is tight, 
or does the threshold $\frac{1}{q} + \frac{1}{r} =1$ induces a phase transition in
the dimension dependence of the rate exponent of vanishing viscosity approximations for MFGs?

\smallskip

\item
In Theorems \ref{T.4.3}, \ref{T.5.3} and \ref{T.5.4}, we proved the convergence (rate) of $(m_\nu, u_\nu)$ in $L^2$ norm. 
For the (1+1)-dimensional KPZ equation, it is known that under narrow wedge initial condition, 
$u_\nu$ converges locally uniformly but without rate.
\begin{enumerate}[itemsep = 3 pt]
\item
Does the convergence still hold in a stronger sense, e.g. locally uniformly for $u_\nu$,
and do we get the same convergence rate?

\smallskip

\item
Can we relax the condition $q=2$, \eqref{6.8} and $r=2$ in these theorems?
\end{enumerate}

\smallskip
\noindent
In Theorem \ref{T.5.4}, we proved the convergence of $u_\nu$ in $L^2$ norm weighted by $m_\nu$.
Can we prove the same rate in $L^2$ norm weighted by $m$ (and further locally uniformly)?

\smallskip

\item
For the $(1+1)$-dimensional KPZ equation, Theorem \ref{T.4.3} implies that
$\rho_\nu$ converges in $L^2$ norm with a rate $\nu^{\frac{1}{4}}$;
Theorem \ref{T.5.4} shows that $h_\nu$ converges in a weighted $L^2$ norm with rate $\nu^{\frac{1}{8}}$.
This relies on the Cole-Hopf transform from the SHE to the KPZ equation, which underlies the weak noise theory.
Is there a higher dimensional weak noise theory to connect large deviations of the KPZ equation to 
vanishing viscosity for MFGs?

\smallskip

\item 
There are also a few directions to extend this work from a PDE perspective.
For instance,
\begin{enumerate}[itemsep = 3 pt]
\item[(a)]
Is it possible to generalize our results to non-compact domains (e.g. $\mathbb{R}^d$)?
\item[(b)]
In the local coupling setting, can we allow the terminal data $u(T,\cdot)$ to depend on $m(T,\cdot)$ pointwise? 
\item[(c)]
What is the convergence rate of vanishing viscosity approximations for kinetic MFGs (see \cite{griffin2022variational})?
\end{enumerate}

\end{enumerate}

\quad We hope that our convergence analysis of vanishing viscosity approximations for MFGs may trigger further research
connecting the KPZ equation and MFGs in dimension $d \ge 2$,
and on the dimension effect as $d \to \infty$.

\bigskip
{\bf Acknowledgments}: 
We thank Sharon Xuan Di for various pointers to the applications of first-order MFGs in transportation,
and Yier Lin for telling us about the weak noise theory for the KPZ equation.
We thank Andrzej \'{S}wi\c{e}ch for remarks which lead to the first part of Section \ref{S61}, and Alp{\'a}r R. M{\'e}sz{\'a}ros for various pointers to references.
We also thank Ivan Corwin, Fran\c{c}ois Delarue, Dan Lacker, Qiang Du and Jianfeng Zhang for helpful discussions.
The work of W. Tang is supported by NSF grants DMS-2113779 and DMS-2206038, and a start-up grant at Columbia University.

\appendix

\section{}
\begin{lemma}
    Suppose $q,q'>1$ satisfying  $\frac{1}{q}+\frac{1}{q'}=1$, and $\sigma>0$. Then for $c_0:=(\max\{q,q'\})^{-1}$, we have for all $m,\alpha\geq 0$,
    \[
    \frac{1}q (\sigma m)^q+\frac{1}{q'}(\alpha/\sigma)^{q'}\geq m\alpha+c_0((\sigma m)^{q/2}-(\alpha/\sigma)^{q'/2})^2.
    \]
\end{lemma}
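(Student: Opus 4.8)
The plan is to strip away the parameters $\sigma,q,q'$ by two changes of variable and reduce the statement to the one-variable concavity (\emph{tangent-line}) estimate: for $z\ge 0$ and $\theta\in[0,1]$ one has $z^{\theta}\le 1-\theta+\theta z$, because $z\mapsto z^{\theta}$ is concave on $[0,\infty)$ and $1-\theta+\theta z$ is its tangent line at $z=1$.

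First I would set $a:=\sigma m\ge0$ and $b:=\alpha/\sigma\ge0$, which recasts the claim as the Young-type refinement
\[
\tfrac1q a^{q}+\tfrac1{q'}b^{q'}\ \ge\ ab+c_0\bigl(a^{q/2}-b^{q'/2}\bigr)^{2}.
\]
Then I would substitute $x:=a^{q/2}\ge0$, $y:=b^{q'/2}\ge0$ and write $\lambda:=1/q$, so $1-\lambda=1/q'$; using $\lambda q=(1-\lambda)q'=1$ one checks $a^{q}=x^{2}$, $b^{q'}=y^{2}$, $a^{q/2}=x$, $b^{q'/2}=y$, and $ab=x^{2\lambda}y^{2(1-\lambda)}$ since $a=x^{2/q}=x^{2\lambda}$ and $b=y^{2/q'}=y^{2(1-\lambda)}$. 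Hence it suffices to prove, for all $x,y\ge0$,
\[
\lambda x^{2}+(1-\lambda)y^{2}\ \ge\ x^{2\lambda}y^{2(1-\lambda)}+c_0\,(x-y)^{2},\qquad c_0=\min\{\lambda,1-\lambda\}.
\]

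Using the symmetry $q\leftrightarrow q'$, $m\leftrightarrow\alpha$, $\sigma\leftrightarrow\sigma^{-1}$, which leaves $c_0$ unchanged, I would assume without loss of generality $q\ge q'$, i.e.\ $\lambda\le\tfrac12$ and $c_0=\lambda$. After the trivial case $y=0$, dividing by $y^{2}$ and setting $z:=x/y\ge0$ turns the inequality — after expanding $\lambda(z-1)^2$ and cancelling $\lambda z^{2}$ — into
\[
1-2\lambda+2\lambda z\ \ge\ z^{2\lambda},
\]
which with $\theta:=2\lambda\in[0,1]$ is precisely the tangent-line estimate above (equivalently, the weighted AM--GM inequality $\theta z+(1-\theta)\ge z^{\theta}$). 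Since each step is an equivalence or a harmless reduction, this completes the argument.

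I do not expect any genuine obstacle. The only point needing care is the bookkeeping of the constant: one must verify that $c_0=\min\{1/q,1/q'\}=(\max\{q,q'\})^{-1}$ is exactly the value for which the last step lands on the tangent line — a larger constant fails for $z$ near $1$ — and that equality there, namely $a^{q}=b^{q'}$, i.e.\ $(\sigma m)^{q}=(\alpha/\sigma)^{q'}$, confirms this $c_0$ is optimal, matching the optimality claim made in the main text.
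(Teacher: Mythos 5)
Your proof is correct and follows essentially the same route as the paper's: after normalizing away $\sigma$ and cancelling the dominant power term, both arguments reduce the claim to a weighted AM--GM inequality (your tangent-line estimate $1-\theta+\theta z\ge z^{\theta}$ is exactly the paper's application of H\"older/AM--GM with weights $1-2/q$ and $2/q$). Your explicit symmetry reduction and handling of the $y=0$ case merely spell out what the paper dismisses as ``direct.''
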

\begin{proof}
By replacing $\sigma m$ by $m$, and $\alpha/\sigma$ by $\alpha$, we can assume without loss of generality that $\sigma=1$. 
Also we can assume that $m,\alpha>0$ and $q>2>q'>1$, otherwise the proof is direct. 
In this case, $c_0=\frac{1}{q}$. By direct computations, the inequality is reduced to
\beq\lb{A1}
(\frac1{q'}-\frac1q)\alpha^{q'-1}+\frac2{q}m^{q/2}\alpha^{q'/2-1}\geq m.
\eeq
By H\"{o}lder's inequality, 
\[
\gamma_1\alpha^{q'-1}+\gamma_2 m^{q/2}\alpha^{q'/2-1}\geq  \alpha^{\gamma_1(q'-1)+\gamma_2(q'/2-1)}m^{\gamma_2(q/2)}=m,
\]
where
$\gamma_1:=1-2/q,\gamma_2:=2/q$. This implies \eqref{A1} immediately, which finishes the proof.

\quad It is also not hard to see from the proof that $c_0$ can not be larger than $\frac1q$. Otherwise, if $c_0>\frac1q$, we need 
\[
(\frac1{q'}-c_0)\alpha^{q'-1}+\frac2{q}m^{q/2}\alpha^{q'/2-1}\geq m+(c_0-\frac1q)m^q/\alpha,
\]
but this cannot be true as passing $\alpha\to0$ leads to a contradiction.
\end{proof}

\section{Proof of Lemma \ref{L.6.4}}
\quad Let us only prove that $ \sup_{ \Omega } (v- v_\eps)$ is small. The other side of the estimate follows the same. 
We split the proof into five steps.

\quad {\bf Step 1}. First of all, since $v,v_\eps$ are uniformly Lipschitz continuous, by modifying $H(x,p)$ in the region where $|p|$ is large (see e.g., section 5 of \cite{tang2023policy}), we can further assume without loss of generality that for some $C>0$,
\beq\lb{B1}
|H_x(x,p)|\leq C\quad\text{ for all }(x,p)\in\bbT^d\times\bbR^d.
\eeq

\quad Let us take $T\geq 1$. Suppose $(t_0,x_0)\in [0,T]\times \bbT^d$ is such that
\beq\lb{B3.2}
3\sigma:=v(t_0,x_0)-v_\eps (t_0,x_0)= \sup_{(t,x)\in \Omega}\left[ v(t,x)-v_\eps (t,x)\right]>0.
\eeq
Below we will show $\sigma\leq C\sqrt{\eps}$ where $C$ depends on $T$, the Lipschitz constant of $v$ and $v_\eps$, and the assumptions.

\quad {\bf Step 2}. Consider a smooth function $\varphi:\bbR^{d+1}\to [0,1]$ such that
\begin{enumerate}
    \item[(i)] $\varphi(t,x)=1-t^2-|x|^2$ if $t^2+|x|^2<1/2$,
    
    \smallskip
    
    \item[(ii)] $0\leq \varphi(t,x)\leq 1/2$ if $t^2+|x|^2>1/2$, and $\varphi(t,x)= 0$ if $t^2+|x|^2>1$.
\end{enumerate}
For $\delta>0$, denote $\varphi_\delta(t,x):=\varphi(t/\delta,x/\delta)$, $\Omega:=(0,T)\times\bbT^d$, and
\[
 L:=\sup\left\{v(t,x),-v_\eps (t,x)\,:\,(t,x)\in \Omega\right\}+1\geq 1,
\]

\quad Next, we define $\Phi_{\eps,\delta}: [0,T]^2\times\bbT^{2d}\to \bbR$ by
\begin{align*}
\Phi_{\eps,\delta}(t,s,x,y):=v(t,x)-v_\eps (s,y)-{\sigma}(2T-t-s)/T+8L\varphi_\delta(t-s,x-y).
\end{align*}
There exists $(t_1,s_1,x_1,y_1)\in [0,T]^2\times \bbT^{2d}$ such that
\beq\lb{B3.3}
\Phi_{\eps,\delta}(t_1,s_1,x_1,y_1)=\max_{[0,T]^2\times \bbT^{2d}}\Phi_{\eps,\delta}(t,s,x,y),
\eeq
and by \eqref{B3.2} and $\varphi_\delta(0,0)=1$,
\beq\lb{3.10}
\Phi_{\eps,\delta}(t_1,s_1,x_1,y_1)\geq \Phi_{\eps,\delta}(t_0,t_0,x_0,x_0)\geq 8L+\sigma.
\eeq
Since $\max\{v(t_1,x_1),-v_\eps (s_1,y_1)\}\leq L$, 
\[
\Phi_{\eps,\delta}(t_1,s_1,x_1,y_1)\leq 2L+8L\varphi_\delta(t_1-s_1,x_1-y_1),
\]
which, together with \eqref{3.10}, implies
$\varphi_\delta(t_1-s_1,x_1-y_1)\geq {3}/{4}$.
Then by (i), we get 
\beq\lb{B3.4}
\varphi_\delta(t-s_1,x-y_1)=1-(|t-s_1|^2+|x-y_1|^2)/\delta^2,
\eeq
whenever $|t-t_1|,|x-x_1|\leq \delta/C$ for some $C>0$.

\quad {\bf Step 3}. Now, in view of \eqref{B3.3}, the mapping
\beq\lb{B3.5}
(t,x)\mapsto v(t,x)+{\sigma}t/T+8L\varphi_\delta(t-s_1,x-y_1).
\eeq
is maximized at $(t,x)=(t_1,x_1)$. 
As $v$ is uniformly Lipschitz continuous by the assumption, 
we get that 
\[
|D\varphi_\delta(t_1-s_1,x_1-y_1)|\leq C/L\quad\text{ and }
\] 
\[
|\partial_t \varphi_\delta(t_1-s_1,x_1-y_1)|\leq C(1+\sigma/T)/L.
\]
By \eqref{B3.4} and $\sigma\leq L$, these yield
\beq\lb{B3.8}
|x_1-y_1|\leq C\delta^2/L\quad\text{ and }
\eeq
\beq\lb{3.9}
|t_1-s_1|\leq C\delta^2(1+\sigma/T)/L.
\eeq

\quad {\bf Step 4}. We firstly assume that $t_1,s_1>0$. In view of \eqref{B3.5}, the viscosity solution test for $v$ yields
\[
\begin{aligned}
-{\sigma}/{T}-8L\,\partial_t \varphi_\delta(t_1-s_1,x_1-y_1)+H\left(x_1,-8L\,D \varphi_\delta(t_1-s_1,x_1-y_1))\right)\geq g(t_1,x_1).
\end{aligned}
\]
Similarly, because
\[
(s,y)\to v_\eps(s,y)-\frac{\sigma}{T}s-8L\,\varphi_\delta(t_1-s,x_1-y)
\]
is minimized at $(s_1,y_1)$, the viscosity solution test yields
\begin{align*}
&{\sigma}/{T}-8L\,\partial_t \varphi_\delta(t_1-s_1,x_1-y_1)+H\left(y_1,-8L\, D \varphi_\delta(t_1-s_1,x_1-y_1)\right)\\
&\qquad\qquad -8\eps L\Delta \varphi_\delta(t_1-s_1,x_1-y_1)\leq g(s_1,y_1).
\end{align*}
Thus we get
\beq\lb{3.11}
\begin{aligned}
{2\sigma}/{T}&\leq H\left(x_1,-8L\,D \varphi_\delta(t_1-s_1,x_1-y_1)\right)-H\left(y_1,-8L\,D \varphi_\delta(t_1-s_1,x_1-y_1)\right)\\
&\qquad\qquad + 8\eps L\,\Delta\varphi_\delta(t_1-s_1,x_1-y_1)+g(s_1,y_1)-g(t_1,x_1).
\end{aligned}
\eeq
For the second order term, the definition of $\varphi_\delta$ implies
\beq\lb{3.6}
8\eps L\,\Delta \varphi_\delta(t_1-s_1,x_1-y_1)\leq C\eps L\delta^{-2}.
\eeq
Using \eqref{3.11}, \eqref{3.6}, \eqref{B1} and Lipschitz continuity of $g$ yields for some universal $C$,
\begin{align*}
{2\sigma}/{T}\leq C\eps L\delta^{-2}+C(|x_1-y_1|+|t_1-s_1|)\leq C\eps L\delta^{-2} +C\delta^2(1+\sigma/T)/L
\end{align*}
where in the last inequality we used \eqref{B3.8} and \eqref{3.9}.
Finally, taking $\delta:=L^{1/2}\eps^{1/4}$ yields
$\sigma\leq  C\sqrt{\eps}$ when $\eps$ is sufficiently small depending only on $T,C$.
This finishes the proof of the upper bound of $\sup_{ \Omega }(v-v_\eps )$ in the case when $t_1,s_1>0$.

\quad {\bf Step 5}. Finally, suppose that one of $t_1$ and $s_1$ equals to $T$. Let us only prove for the case when $t_1=0$. By \eqref{3.10},
\[
8L+\sigma \leq \Phi_{\eps,\delta}(t_1,s_1,x_1,y_1)\leq v(t_1,x_1)-v_\eps (s_1,y_1)+8L\,\varphi_\delta(t_1-s_1,x_1-y_1).
\]
Again using that $v_\eps $ is uniformly Lipschitz continuous, this, \eqref{B3.8} and \eqref{3.9} yield
\begin{align*}
8L+\sigma
&\leq |v(0,x_1)-v(0,y_1)|+|v_\eps(0,y_1)-v_\eps (s_1,y_1)|+8L\,\varphi_\delta(-s_1,x_1-y_1)\\
&\leq C(|x_1-y_1|+|s_1|)+8L \leq C\delta^2(1+\sigma/T)/L+8L.
\end{align*}
Since $\delta=L^{1/2}\eps^{1/4}$, this yields $\sigma\leq C \sqrt{\eps}$ for some universal $C>0$ when $\eps$ is small.

\bibliography{mfg}

\begin{thebibliography}{10}

\bibitem{achdou2021mean}
Y.~Achdou, P.~Cardaliaguet, F.~Delarue, A.~Porretta, and F.~Santambrogio.
\newblock {\em Mean field games}, volume 2281 of {\em Lecture Notes in
  Mathematics}.
\newblock Springer, Cham, 2020.
\newblock Notes from the CIME School held in Cetraro, June 2019, Edited by
  Cardaliaguet and Porretta, Fondazione CIME/CIME Foundation Subseries.

\bibitem{AT}
S.~N. Armstrong and H.~V. Tran.
\newblock Viscosity solutions of general viscous {H}amilton-{J}acobi equations.
\newblock {\em Math. Ann.}, 361(3-4):647--687, 2015.

\bibitem{BB05}
S.~Bianchini and A.~Bressan.
\newblock Vanishing viscosity solutions of nonlinear hyperbolic systems.
\newblock {\em Ann. of Math. (2)}, 161(1):223--342, 2005.

\bibitem{bre99}
Y.~Brenier.
\newblock Minimal geodesics on groups of volume-preserving maps and generalized
  solutions of the {E}uler equations.
\newblock {\em Comm. Pure Appl. Math.}, 52(4):411--452, 1999.

\bibitem{BHY12}
A.~Bressan, F.~Huang, Y.~Wang, and T.~Yang.
\newblock On the convergence rate of vanishing viscosity approximations for
  nonlinear hyperbolic systems.
\newblock {\em SIAM J. Math. Anal.}, 44(5):3537--3563, 2012.

\bibitem{BY04}
A.~Bressan and T.~Yang.
\newblock On the convergence rate of vanishing viscosity approximations.
\newblock {\em Comm. Pure Appl. Math.}, 57(8):1075--1109, 2004.

\bibitem{CCG21}
S.~Cacace, F.~Camilli, and A.~Goffi.
\newblock A policy iteration method for mean field games.
\newblock {\em ESAIM Control Optim. Calc. Var.}, 27:Paper No. 85, 19, 2021.

\bibitem{C22}
F.~Camilli.
\newblock A policy iteration method for mean field games.
\newblock {\em IFAC-PapersOnLine}, 55(30):406--411, 2022.

\bibitem{CT22}
F.~Camilli and Q.~Tang.
\newblock Rates of convergence for the policy iteration method for mean field
  games systems.
\newblock {\em J. Math. Anal. Appl.}, 512(1):Paper No. 126138, 18, 2022.

\bibitem{cannarsa2018existence}
P.~Cannarsa and R.~Capuani.
\newblock Existence and uniqueness for mean field games with state constraints.
\newblock In {\em P{DE} models for multi-agent phenomena}, volume~28 of {\em
  Springer INdAM Ser.}, pages 49--71. Springer, Cham, 2018.

\bibitem{semiconcave}
P.~Cannarsa and C.~Sinestrari.
\newblock {\em Semiconcave functions, {H}amilton-{J}acobi equations, and
  optimal control}, volume~58 of {\em Progress in Nonlinear Differential
  Equations and their Applications}.
\newblock Birkh\"{a}user Boston, Inc., Boston, MA, 2004.

\bibitem{CSZ17}
F.~Caravenna, R.~Sun, and N.~Zygouras.
\newblock Universality in marginally relevant disordered systems.
\newblock {\em Ann. Appl. Probab.}, 27(5):3050--3112, 2017.

\bibitem{note}
P.~Cardaliaguet.
\newblock Notes on mean field games.
\newblock 2013.
\newblock Available at
  \url{https://www.ceremade.dauphine.fr/~cardaliaguet/MFG20130420.pdf}.

\bibitem{car}
P.~Cardaliaguet.
\newblock Weak solutions for first order mean field games with local coupling.
\newblock In {\em Analysis and geometry in control theory and its
  applications}, volume~11 of {\em Springer INdAM Ser.}, pages 111--158.
  Springer, Cham, 2015.

\bibitem{CG}
P.~Cardaliaguet and P.~J. Graber.
\newblock Mean field games systems of first order.
\newblock {\em ESAIM Control Optim. Calc. Var.}, 21(3):690--722, 2015.

\bibitem{CGPT}
P.~Cardaliaguet, P.~J. Graber, A.~Porretta, and D.~Tonon.
\newblock Second order mean field games with degenerate diffusion and local
  coupling.
\newblock {\em NoDEA Nonlinear Differential Equations Appl.}, 22(5):1287--1317,
  2015.

\bibitem{2012long}
P.~Cardaliaguet, J.-M. Lasry, P.-L. Lions, and A.~Porretta.
\newblock Long time average of mean field games.
\newblock {\em Netw. Heterog. Media}, 7(2):279--301, 2012.

\bibitem{carmona}
R.~Carmona.
\newblock Applications of mean field games in financial engineering and
  economic theory.
\newblock 2020.
\newblock arXiv:2012.05237.

\bibitem{CD20}
S.~Chatterjee and A.~Dunlap.
\newblock Constructing a solution of the {$(2+1)$}-dimensional {KPZ} equation.
\newblock {\em Ann. Probab.}, 48(2):1014--1055, 2020.

\bibitem{Corwin12}
I.~Corwin.
\newblock The {K}ardar-{P}arisi-{Z}hang equation and universality class.
\newblock {\em Random Matrices Theory Appl.}, 1(1):1130001, 76, 2012.

\bibitem{two}
M.~G. Crandall and P.-L. Lions.
\newblock Two approximations of solutions of {H}amilton-{J}acobi equations.
\newblock {\em Math. Comp.}, 43(167):1--19, 1984.

\bibitem{dibenedettobook}
E.~DiBenedetto.
\newblock {\em Degenerate parabolic equations}.
\newblock Universitext. Springer-Verlag, New York, 1993.

\bibitem{djehiche}
B.~Djehiche, A.~Tcheukam, and H.~Tembine.
\newblock Mean-field-type games in engineering.
\newblock {\em AIMS Electronics and Electrical Engineering}, 1(1):18--73, 2017.

\bibitem{evans2010adjoint}
L.~C. Evans.
\newblock Adjoint and compensated compactness methods for {H}amilton-{J}acobi
  {PDE}.
\newblock {\em Arch. Ration. Mech. Anal.}, 197(3):1053--1088, 2010.

\bibitem{ferreira2021existence}
R.~Ferreira, D.~Gomes, and T.~Tada.
\newblock Existence of weak solutions to time-dependent mean-field games.
\newblock {\em Nonlinear Anal.}, 212:Paper No. 112470, 31, 2021.

\bibitem{fetecau2022zero}
R.~C. Fetecau, H.~Huang, D.~Messenger, and W.~Sun.
\newblock Zero-diffusion limit for aggregation equations over bounded domains.
\newblock {\em Discrete Contin. Dyn. Syst.}, 42(10):4905--4936, 2022.

\bibitem{fetecau2019swarming}
R.~C. Fetecau, M.~Kovacic, and I.~Topaloglu.
\newblock Swarming in domains with boundaries: approximation and regularization
  by nonlinear diffusion.
\newblock {\em Discrete Contin. Dyn. Syst. Ser. B}, 24(4):1815--1842, 2019.

\bibitem{GLT21}
P.~Gaudreau~Lamarre, Y.~Lin, and L.-C. Tsai.
\newblock {KPZ} equation with a small noise, deep upper tail and limit shape.
\newblock 2021.
\newblock arXiv:2106.13313.

\bibitem{GPS2}
D.~A. Gomes, E.~Pimentel, and H.~S\'{a}nchez-Morgado.
\newblock Time-dependent mean-field games in the superquadratic case.
\newblock {\em ESAIM Control Optim. Calc. Var.}, 22(2):562--580, 2016.

\bibitem{GPS1}
D.~A. Gomes, E.~A. Pimentel, and H.~S\'{a}nchez-Morgado.
\newblock Time-dependent mean-field games in the subquadratic case.
\newblock {\em Comm. Partial Differential Equations}, 40(1):40--76, 2015.

\bibitem{gra}
P.~J. Graber.
\newblock Optimal control of first-order {H}amilton-{J}acobi equations with
  linearly bounded {H}amiltonian.
\newblock {\em Appl. Math. Optim.}, 70(2):185--224, 2014.

\bibitem{GM}
P.~J. Graber and A.~R. M\'{e}sz\'{a}ros.
\newblock Sobolev regularity for first order mean field games.
\newblock {\em Ann. Inst. H. Poincar\'{e} C Anal. Non Lin\'{e}aire},
  35(6):1557--1576, 2018.

\bibitem{GM19}
P.~J. Graber, A.~R. M\'{e}sz\'{a}ros, F.~J. Silva, and D.~Tonon.
\newblock The planning problem in mean field games as regularized mass
  transport.
\newblock {\em Calc. Var. Partial Differential Equations}, 58(3):Paper No. 115,
  28, 2019.

\bibitem{griffin2022variational}
M.~Griffin-Pickering and A.~R. M{\'e}sz{\'a}ros.
\newblock A variational approach to first order kinetic mean field games with
  local couplings.
\newblock {\em Comm. Partial Differential Equations}, 47(10):1945--2022, 2022.

\bibitem{HDDC20b}
K.~Huang, X.~Di, Q.~Du, and X.~Chen.
\newblock A game-theoretic framework for autonomous vehicles velocity control:
  bridging microscopic differential games and macroscopic mean field games.
\newblock {\em Discrete Contin. Dyn. Syst. Ser. B}, 25(12):4869--4903, 2020.

\bibitem{HDDC20}
K.~Huang, X.~Di, Q.~Du, and X.~Chen.
\newblock Scalable traffic stability analysis in mixed-autonomy using continuum
  models.
\newblock {\em Transp. Res. C: Emerg. Technol.}, 111:616--630, 2020.

\bibitem{huang2007large}
M.~Huang, P.~E. Caines, and R.~P. Malham\'{e}.
\newblock Large-population cost-coupled {LQG} problems with nonuniform agents:
  individual-mass behavior and decentralized {$\epsilon$}-{N}ash equilibria.
\newblock {\em IEEE Trans. Automat. Control}, 52(9):1560--1571, 2007.

\bibitem{HMC}
M.~Huang, R.~P. Malham\'{e}, and P.~E. Caines.
\newblock Large population stochastic dynamic games: closed-loop
  {M}c{K}ean-{V}lasov systems and the {N}ash certainty equivalence principle.
\newblock {\em Commun. Inf. Syst.}, 6(3):221--251, 2006.

\bibitem{16}
H.~Ishii.
\newblock On the equivalence of two notions of weak solutions, viscosity
  solutions and distribution solutions.
\newblock {\em Funkcial. Ekvac}, 38(1):101--120, 1995.

\bibitem{jovanovic}
B.~Jovanovic and R.~W. Rosenthal.
\newblock Anonymous sequential games.
\newblock {\em J. Math. Econom.}, 17(1):77--87, 1988.

\bibitem{KAS16}
P.~Kachroo, S.~Agarwal, and S.~Sastry.
\newblock Inverse problem for non-viscous mean field control: example from
  traffic.
\newblock {\em IEEE Trans. Automat. Control}, 61(11):3412--3421, 2016.

\bibitem{KMS16}
A.~Kamenev, B.~Meerson, and P.~V. Sasorov.
\newblock Short-time height distribution in the one-dimensional
  {K}ardar-{P}arisi-{Z}hang equation: starting from a parabola.
\newblock {\em Phys. Rev. E}, 94(3):032108, 9, 2016.

\bibitem{LL06a}
J.-M. Lasry and P.-L. Lions.
\newblock Jeux \`a champ moyen. {I}. {L}e cas stationnaire.
\newblock {\em C. R. Math. Acad. Sci. Paris}, 343(9):619--625, 2006.

\bibitem{LL06b}
J.-M. Lasry and P.-L. Lions.
\newblock Jeux \`a champ moyen. {II}. {H}orizon fini et contr\^{o}le optimal.
\newblock {\em C. R. Math. Acad. Sci. Paris}, 343(10):679--684, 2006.

\bibitem{LL07}
J.-M. Lasry and P.-L. Lions.
\newblock Mean field games.
\newblock {\em Jpn. J. Math.}, 2(1):229--260, 2007.

\bibitem{lin}
C.-T. Lin and E.~Tadmor.
\newblock {$L^1$}-stability and error estimates for approximate
  {H}amilton-{J}acobi solutions.
\newblock {\em Numer. Math.}, 87(4):701--735, 2001.

\bibitem{LT21st}
Y.~Lin and L.-C. Tsai.
\newblock Short time large deviations of the {KPZ} equation.
\newblock {\em Comm. Math. Phys.}, 386(1):359--393, 2021.

\bibitem{LT22}
Y.~Lin and L.-C. Tsai.
\newblock A lower-tail limit in the weak noise theory.
\newblock 2022.
\newblock arXiv:2210.05629.

\bibitem{MU18}
J.~Magnen and J.~Unterberger.
\newblock The scaling limit of the {KPZ} equation in space dimension 3 and
  higher.
\newblock {\em J. Stat. Phys.}, 171(4):543--598, 2018.

\bibitem{MKV16}
B.~Meerson, E.~Katzav, and A.~Vilenkin.
\newblock Large deviations of surface height in the {K}ardar-{P}arisi-{Z}hang
  equation.
\newblock {\em Phys. Rev. Lett.}, 116(7):070601, 5, 2016.

\bibitem{classical}
S.~Mu\~{n}oz.
\newblock Classical and weak solutions to local first-order mean field games
  through elliptic regularity.
\newblock {\em Ann. Inst. H. Poincar\'{e} C Anal. Non Lin\'{e}aire},
  39(1):1--39, 2022.

\bibitem{orrieri}
C.~Orrieri, A.~Porretta, and G.~Savar\'{e}.
\newblock A variational approach to the mean field planning problem.
\newblock {\em J. Funct. Anal.}, 277(6):1868--1957, 2019.

\bibitem{por14}
A.~Porretta.
\newblock On the planning problem for the mean field games system.
\newblock {\em Dyn. Games Appl.}, 4(2):231--256, 2014.

\bibitem{POR}
A.~Porretta.
\newblock Weak solutions to {F}okker-{P}lanck equations and mean field games.
\newblock {\em Arch. Ration. Mech. Anal.}, 216(1):1--62, 2015.

\bibitem{porretta2023regularizing}
A.~Porretta.
\newblock Regularizing effects of the entropy functional in optimal transport
  and planning problems.
\newblock {\em J. Funct. Anal.}, 284(3):Paper No. 109759, 65, 2023.

\bibitem{S17}
A.~Prosinski and F.~Santambrogio.
\newblock Global-in-time regularity via duality for congestion-penalized mean
  field games.
\newblock {\em Stochastics}, 89(6-7):923--942, 2017.

\bibitem{Quas12}
J.~Quastel.
\newblock Introduction to {KPZ}.
\newblock In {\em Current Developments in Mathematics, 2011}, pages 125--194.
  Int. Press, Somerville, MA, 2012.

\bibitem{tang2023policy}
W.~Tang, H.~V. Tran, and Y.~P. Zhang.
\newblock Policy iteration for the deterministic control problems--a viscosity
  approach.
\newblock 2023.
\newblock arXiv:2301.00419.

\bibitem{tran2011adjoint}
H.~V. Tran.
\newblock Adjoint methods for static {H}amilton-{J}acobi equations.
\newblock {\em Calc. Var. Partial Differential Equations}, 41(3-4):301--319,
  2011.

\bibitem{tran}
H.~V. Tran.
\newblock {\em Hamilton-{J}acobi equations---theory and applications}, volume
  213 of {\em Graduate Studies in Mathematics}.
\newblock American Mathematical Society, Providence, RI, 2021.

\bibitem{Tsai22}
L.-C. Tsai.
\newblock Integrability in the weak noise theory.
\newblock 2022.
\newblock arXiv:2204.00614.

\bibitem{zhang}
Y.~Zhang.
\newblock On continuity equations in space-time domains.
\newblock {\em Discrete Contin. Dyn. Syst.}, 38(10):4837--4873, 2018.

\end{thebibliography}
\bibliographystyle{abbrv}

\end{document}